\newcommand{\C}[1]{{\protect\mathcal{#1}}}
\newcommand{\B}[1]{{\mathbold #1}}
\newcommand{\I}[1]{{\mathbbm #1}}
\renewcommand{\O}[1]{\overline{#1}}
\newcommand{\V}[1]{\mathbold{#1}}
\newcommand{\e}{\epsilon}
\newcommand{\floor}[1]{\lfloor #1\rfloor}
\newcommand{\me}{{\mathrm e}}
\renewcommand{\mid}{:}
\renewcommand{\ge}{\geqslant}
\renewcommand{\le}{\leqslant}
\renewcommand{\L}[1]{\ifthenelse{\equal{#1}{}}{\ell}{\ell\!\left(#1\right)}}
\renewcommand{\l}[1]{L(#1)}
\newcommand{\ProofOf}[1]{\medskip\noindent\textbf{Proof of~\ref{#1}.}}
\newif\ifnotesw\noteswtrue
\newcommand{\hide}[1]{}
\newcommand{\beq}[1]{\begin{equation}\label{#1}}
	\newcommand{\eeq}{\end{equation}}
\newtheorem{theorem}{Theorem}[section]
\newtheorem{lemma}[theorem]{Lemma}
\newtheorem*{claim*}{Claim}
\newtheorem{claim}[theorem]{Claim}
\newtheorem{proposition}[theorem]{Proposition}
\theoremstyle{definition}
\newtheorem{definition}[theorem]{Definition}
\theoremstyle{remark}
\newtheorem{remark}[theorem]{Remark}
\newcommand{\qqed}{} 
\newcommand{\tP}{\widetilde{\mathbb{P}}}
\newcommand{\W}{\mathcal{W}}
\newcommand{\tW}{\widetilde{\mathcal{W}}}
\newcommand{\tH}{\widetilde{\mathbb{H}}}
\newcommand{\FORB}{\mathbf{FORB}}
\newcommand{\RN}{\mathbf{RN}}
\newcommand{\fH}{\mathcal{H}}
\newcommand{\fG}{\mathcal{G}}
\newcommand{\ind}[1]{_{#1}}
\def\leukfrac#1/#2{\leavevmode
               \kern.1em
                \raise.9ex\hbox{\the\scriptfont0 ${}_#1$}
                \hskip -1pt\kern-.1em
                /\kern-.15em\lower.10ex\hbox{\the\scriptfont0 ${}_#2$}}
\def\diam{\mathop{\operator@font diam}\nolimits}
\newcommand{\DZ}[1]{\cite[#1]{DemboZeitouni10ldta}}
\newcommand{\RS}[1]{\cite[#1]{RassoulaghaSeppelainen14cldigm}}
\newcommand{\Lo}[1]{\cite[#1]{Lovasz:lngl}}
\newcommand{\ra}{\ $\rightarrow$\ }
\newcommand{\Ak}{{\bf A}^{(k)}}
\newcommand{\Wk}{\mathcal{W}^{(k)}}
\newcommand{\tWk}{\widetilde{\mathcal{W}}^{(k)}}
\newcommand{\Walpha}{\mathcal{W}\times \Aalpha}
\newcommand{\tWalpha}{\widetilde{\mathcal{W}}^{(\V\alpha)}}
\newcommand{\tind}{t_{\mathrm{ind}}}
\newcommand{\T}[1]{\widetilde{#1}}
\newcommand{\tR}{\widetilde{\mathbb{R}}}
\newcommand{\tPap}{\tP_{\V a_n,\V p}}
\newcommand{\dd}{\,\mathrm{d}}
\newcommand{\Aalpha}{{\bf A}^{(\V\alpha)}}
\newcommand{\f}[1]{f^{#1}}
\newcommand{\tf}[1]{\T{f^{#1}}}
\newcommand{\Sball}{{\O B}_{\delta_\Box}}
\newcommand{\iP}{\I P}
\newcommand{\iR}{\I R}
\newcommand{\cI}[2]{I_{#2}^{(#1)}}
\newcommand{\NoZero}{}
\newcommand{\NZ}{\I N_{\ge 0}}
\newcommand{\Tk}[1]{\widetilde{(#1)}}
\renewcommand{\rho}{r}
\newcommand{\arxiv}[2]{#2 {\sf In arXiv: #1}}
\renewcommand{\arxiv}[2]{#1}
\begin{document}

\title{Large deviation principles for graphon sampling}

\hide{
\author{\textbf{Jan Greb\'\i k}\\
	UCLA Mathematics\\
	520 Portola Plaza\\
	Los Angeles, CA 90095, USA\\ 
	and\\
	Faculty of Informatics\\
	Masaryk University\\
	Botanick\'a 68A\\
	602 00 Brno, Czech Republic \and 
	\textbf{Oleg Pikhurko}\\
	Mathematics Institute and DIMAP\\
	University of Warwick\\
	Coventry CV4 7AL, UK}
}

\author{Jan Greb\'\i k\thanks{
	Universität Leipzig, Mathematisches Institut, D-04009, Leipzig, Germany. Supported by Leverhulme Research Project Grant RPG-2018-424, and by MSCA Postdoctoral Fellowships 2022 HORIZON-MSCA-2022-PF-01-01 project BORCA grant agreement number 101105722.} \and 
	Oleg Pikhurko\thanks{
	Mathematics Institute and DIMAP,
	University of Warwick,
	Coventry CV4 7AL, UK. Supported by ERC Advanced Grant 101020255 and Leverhulme Research Project Grant RPG-2018-424.}
	}

\maketitle

\begin{abstract}
We investigate possible large deviation principles (LDPs) for the $n$-vertex sampling from a given graphon with various speeds $s(n)$ and resolve all the cases except when the speed $s(n)$ is of order $n^2$. 

For quadratic speed $s=(c+o(1))n^2$, we establish an LDP for an arbitrary $k$-step graphon, which extends a result of
Chatterjee and Varadhan~[\emph{Europ.\ J.\ Combin.}, 32 (2011) 1000--1017] who did this for $k=1$ (that is, for the homogeneous binomial random graphs). This is done by reducing the problem to the LDP for stochastic $k$-block models established by Borgs, Chayes, Gaudio, Petti and Sen~[\emph{``A large deviation principle for block models''}, arxiv:2007.14508, 2020]. We also improve some results from this paper.
\end{abstract}






\section{Introduction}

\subsection{Overview of the new results}

For the reader's convenience, this section briefly and informally, summaries the main new 
results obtained in this paper. (Their full statements can be found in Sections~\ref{se:LDPDef}--\ref{se:SBM}.)

We study large deviations for the \emph{$n$-vertex sample} $\I G(n,W)$ from a given \emph{graphon} $W$, that is, a symmetric measurable function $W:[0,1]^2\to[0,1]$. This is a random graph on vertex set $[n]:=\{1,\dots,n\}$ produced as follows:  we pick $n$ elements $x_1,\dots,x_n\in [0,1]$ uniformly at random and make each pair $\{i,j\}$ an edge with probability $W(x_i,x_j)$, with all choices being mutually independent.
We investigate if a large deviation principle (LDP) holds for a given speed function $s(n)$, i.e.\ if there is a rate function $I$ from the appropriately defined topological space $\tW$ (that includes all finite graphs, up to isomorphism) to $[0,\infty]$ such that the probability of $\I G(n,W)$ hitting a subset $A\subseteq \tW$ can be related to $\exp(-s(n)\inf_{A} I)$ in the standard LDP sense (made precise in Definition~\ref{df:LDP} here).%
\hide{
For a given \emph{speed function} $s(n)$, we investigate if a \emph{large deviation principle} (LDP) holds in an appropriately defined compact metric space $\tW$ that include all finite graphs, meaning that there is a function $I:\tW\to [0,\infty]$ such that the probability that $G_n$ belongs to a set $A\subseteq \tW$ is ``well'' approximated (in a sense that can be made precise) by $\exp\left(-s(n)\inf_{x\in A} I(x)\right)$.

For a given \emph{speed function} $s(n)$ and a set $A$ in the appropriately defined compact metric space $\tW$ that include all finite graphs, we would like to estimate $-\frac1{s(n)}\log \I P[G_n\in A]$ as $n\to\infty$. 
If this quantity tends to a limit $I(x)$, possibly $0$ or $\infty$, whenever (in addition to $n\to\infty$) $A$ ``slowly shrinks'' to a point $x\in \tW$
then a \emph{large deviation principle} (LDP) holds with \emph{rate} $I$ and \emph{speed}~$s$. In this case, the general theory of large deviations allows us to estimate the probabilities of $G_n$ hitting general Borel subsets of~$\tW$, etc.}
The first such LDP result for graphon sampling was given in the ground-breaking work of Chatterjee and Varadhan~\cite{ChatterjeeVaradhan11} who established an LDP  for speed~$(c+o(1))n^2$ when $W$ is the constant graphon (and thus $\I G(n,W)$ is the standard binomial random graph).

We show that, for any graphon $W$, the only ``interesting''  speeds are $\Theta(n)$ and $\Theta(n^2)$; namely for all other speed functions either an LDP does not hold or it holds with an explicitly described rate function $I$ that assumes values $0$ and $\infty$ only. 

For speed $(c+o(1))n$, we establish an LDP for any graphon $W$, with a key ingredient of our proof being an extension by Eichelsbacher~\cite{Eichelsbacher97} of the classical LDP result of Sanov~\cite{Sanov57} for empirical measures.

For speed $(c+o(1))n^2$, we establish an LDP in the case when $W$ is a \emph{step graphon}, that is, the elements of $[0,1]$ are split into $k$ (finitely many) types and the probability of $\{i,j\}$ being an edge of $\I G(n,W)$ depends only on the types of $x_i$ and $x_j$. This case is related to the so-called \emph{stochastic $k$-block random graph model} where the difference to step graphon sampling is that the number of vertices of each type is given.
Our proof uses the LDP for stochastic block models that was recently obtained by Borgs, Chayes, Gaudio, Petti and Sen~\cite{BCGPS}. As a by-product of our line of research, we also improve some results in~\cite{BCGPS}. 

Finally, we completely resolve the LDP problem (for any given speed) for \emph{weighted graph sampling} from $W$, where one takes independent uniform $x_1,\dots,x_n\in [0,1]$ and output an edge-weighted complete graph on $[n]$ with the weight of $\{i,j\}$ being $W(x_i,x_j)$.



\subsection{Large deviations for general topological spaces}\label{se:LDPDef}

The theory of large deviations (see e.g.\ 
the books by Dembo and Zeitouni~\cite{DemboZeitouni10ldta}, Rassoul-Agha and Sepp\"al\"ainen~\cite{RassoulaghaSeppelainen14cldigm}, and Varadhan~\cite{Varadhan16ld})
 studies the probabilities of rare events on the exponential scale. 
This is formally captured by the following definition.

\begin{definition}\label{df:LDP}
	A sequence of Borel probability measures $(\mu_n)_{n\in \I N}$ on a 
	topological space $X$ satisfies a \emph{large deviation principle} (\emph{LDP} for short) with \emph{speed} 
	$s:\I N\to(0,\infty)$
	and \emph{rate function} $I:X\to[0,\infty]$ if
	\begin{itemize}[nosep]
		\item 
	$s(n)\to\infty$ as $n\to\infty$, 
	\item the function $I$ is \emph{lower semi-continuous} (\emph{lsc} for short), that is, for each $\rho\in\I R$
	the level set $\{I\le \rho\}:=\{x\in X\mid I(x)\le \rho\}$ is a closed subset of~$X$,
	\item the following \emph{lower bound} holds:
	\beq{eq:lowerGen}
		\liminf_{n\to\infty} \frac1{s(n)}\,{\log\, \mu_n(G)} 
		\ge -\inf_{x\in G} I(x),\quad \mbox{for every open $G\subseteq X$,}
		\eeq
		\item the following \emph{upper bound} holds:
		\beq{eq:upperGen}\limsup_{n\to\infty} \frac1{s(n)}\,{ \log\,\mu_n(F)} \le  -\inf_{x\in F} I(x),\quad \mbox{for every closed $F\subseteq X$.}
\eeq
\end{itemize}
\end{definition}

\hide{
As it is well-known (see e.g.\ \RS{Lemma~2.11}), if~\eqref{eq:lowerGen} and~\eqref{eq:upperGen} hold for some (not necessarily lsc) function $I:X\to [0,\infty]$ then we can replace $I$ without violating these bounds by its \emph{lower semi-continuous
	regularization} 
\beq{eq:lscR}
 I_{\mathrm{lsc}}(x):=\sup\left\{\inf_{y\in G} I(y)\mid G\ni x\mbox{ and $G\subseteq X$ is open}\right\},\quad x\in X;
\eeq 
furthermore (see e.g.\ \RS{Lemma~2.8}), 
 $I_{\mathrm{lsc}}$ is lower semi-continuous  and, in fact, $I_{\mathrm{lsc}}$ is the largest lsc function with $I_{\mathrm{lsc}}\le I$. 
If $X$ is a regular topological space then there can be at most one lower semi-continuous rate function satisfying Definition~\ref{df:LDP} (see e.g.\ \DZ{Lemma 4.1.4} or \RS{Theorem~2.13}). This motivates the requirement that $I$ is lsc in Definition~\ref{df:LDP}. 
}


Large deviations for various models of random graphs have been receiving much attention (see e.g.\ the survey by Chatterjee~\cite{Chatterjee16bams}), often motivated by specific applications, such as tails of subgraph counts in the binomial random graph (see e.g.\ \cite{BhattacharyaGangulyLubetzkyZhao17,ChatterjeeDembo16,HarelMoussetSamotij22,JansonOleszkewiczRucinski04,JansonRucinski02,JansonRucinski04,LubetzkyZhao15,LubetzkyZhao17rsa}) or in some of its constrained versions (\cite{DemboLubetzky18ecp,DionigiGarlaschelliHollanderMandje21,HollanderMandjesRoccaverdeStarreveld18,HollanderMarkering23}), 
spectral properties (\cite{ChakrabartyHazraHollanderSfragara22,HazraHollanderMarkering25}), algorithms for clustering in 2-block random graphs (\cite{Massoulie14stoc,MosselNeemanSly18}), particular fluctuations of subgraph counts in $W$-random graphs (\cite{BhattacharyaChatterjeeJanson23,DelmasDhersinSciauveau21rsa,FerayMeliotNikeghbali20,HladkyPelekisSileikis21}), etc. 
In addition to the random graph models discussed in this paper, 
an LDP for growing random graphs with a given degree distribution was obtained Dhara and Sen \cite{DharaSen22} by reducing to large deviations for a certain model which gives another generalisation of the binomial random graph $\I 
G(n,p)$; these results were strengthened by Markering~\cite{Markering23}.


\subsection{Binomial random graph and graphons}

A basic but central model
is the \emph{binomial random graph} $\I G(n,p)$, where the vertex set is $[n]$ and 
each pair of vertices is an edge with probability~$p$, independently of other pairs. A large deviation principle for $\I G(n,p)$ for constant $p\in (0,1)$ was established in a ground-breaking paper of
Chatterjee and Varadhan~\cite{ChatterjeeVaradhan11} as follows. (See also the exposition of this proof in  Chatterjee's book~\cite{Chatterjee17ldrg}.) 

As it turns out, the ``right'' setting is to consider \emph{graphons}, that is,  measurable symmetric functions $[0,1]^2\to [0,1]$. On the set $\C W$  of all {graphons}, one can define the so-called \emph{cut-distance} $\delta_\Box$, which is a pseudo-metric on $\C W$. (See Section~\ref{graphons} for all missing definitions related to graphons.) Consider the factor space 
$$
\tW:=\{\T U: U\in\C W\},
$$ 
where $\T U:=\{V\in\C W\mid \delta_\Box(U,V)=0\}$ consists of all graphons \emph{weakly isomorphic} to~$U$.
 The space $(\tW,\delta_\Box)$ naturally appears in the limit theory of dense graphs, see e.g.\ the book by
Lov\'asz~\cite{Lovasz:lngl}. In particular, a graph $G$ on $[n]$ can be identified with the graphon $\f{G}$ where we partition $[0,1]$ into intervals of length $1/n$ each and let $\f{G}$ be the $\{0,1\}$-valued step function that encodes the adjacency relation.
This way, $\I G(n,p)$ gives a (discrete) probability measure $\tP_{n,p}$ on $(\tW,\delta_\Box)$, where $\tP_{n,p}(A)$ for $A\subseteq \tW$ is the probability that the sampled graph,
when viewed as a graphon up to weak isomorphism, belongs to the set~$A$. 

Also, recall that, for $p\in [0,1]$,  the \emph{relative entropy} $h_p:[0,1]\to [0,\infty]$ is defined by
\beq{eq:hp}
 h_p(\rho):=
 p \ \L{\frac{r}{p}}+(1-p) \ \L{\frac{1-r}{p}},
 \quad \rho\in[0,1],
\eeq
 where $\L{}:[0,\infty]\to [-1/\me,\infty]$ is defined by
 \beq{eq:L}
 \L{x}:=x\log (x),\quad  \mbox{for $x\in [0,\infty]$,}
 \eeq
 with the agreement that $0\log(0)=0$ and $\infty\log(\infty)=\infty$.

\begin{theorem}[Chatterjee and Varadhan~\cite{ChatterjeeVaradhan11}]
	\label{th:CV}
	Let $p\in [0,1]$. The function $I_p:\C W\to [0,\infty]$ defined by 
	\beq{eq:IpCV}
	 I_p(U):=\frac12 \int_{[0,1]^2} h_p(U(x,y))\dd x\dd y,\quad U\in\C W,
	 \eeq
	 gives a well-defined function $\tW\to [0,\infty]$ (that is, $I_p$ assumes the same value at any two graphons at $\delta_\Box$-distance 0) which is lower semi-continuous on $(\tW,\delta_\Box)$. Moreover,
	 the sequence of measures $(\tP_{n,p})_{n\in\I N}$ on $(\tW,\delta_\Box)$ satisfies an LDP with speed $n^2$ and rate function~$I_p$.\qqed
	 \end{theorem}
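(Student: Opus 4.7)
I would follow the strategy introduced by Chatterjee and Varadhan, which rests on two pillars: the compactness of $(\widetilde{\mathcal{W}},\delta_\Box)$ (a consequence of Szemer\'edi's regularity lemma, via Lov\'asz--Szegedy), and a direct computation of the sampling probability when the target graphon is a step function. The whole proof goes by establishing the upper bound~\eqref{eq:upperGen} and the lower bound~\eqref{eq:lowerGen} separately, with the bulk of the combinatorics concentrated in the upper bound.

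\textbf{Well-definedness and lsc.} The integrand $h_p(U(x,y))$ is unchanged when $U$ is pushed forward by a measure-preserving bijection of $[0,1]$, so $I_p$ is constant on $\delta_\Box$-zero equivalence classes, hence well-defined on $\widetilde{\mathcal{W}}$. For lower semi-continuity in $\delta_\Box$ I would use convexity of $h_p$: write $h_p$ as a supremum $\sup_{m\in\I N} h_{p,m}$ of continuous convex bounded approximations and note that $U\mapsto \int h_{p,m}(U)$ can be approximated by graphon homomorphism densities (since $h_{p,m}$ can be uniformly approximated by polynomials on $[0,1]$). As homomorphism densities are $\delta_\Box$-continuous and taking suprema preserves lsc, it follows that $I_p$ is lsc on $(\widetilde{\mathcal{W}},\delta_\Box)$.

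\textbf{Step-graphon calculation.} For a $k$-step graphon $U$ with parts of masses $\alpha_1,\ldots,\alpha_k$ and values $(u_{ij})$, condition on the type vector of the $n$ sampled points. With very high probability each class receives $\alpha_i n + O(\sqrt{n\log n})$ points, and conditional on this the edge indicators are independent Bernoulli$(p)$ within each block. A direct Stirling computation (or the classical Sanov theorem for empirical means of i.i.d.\ Bernoullis) shows that the probability of producing an edge density within $\varepsilon$ of $u_{ij}$ in block $(i,j)$ is $\exp(-\alpha_i\alpha_j n^2 h_p(u_{ij})+o(n^2))$, and multiplying over blocks yields
\[
 \widetilde{\mathbb{P}}_{n,p}\bigl(\Sball(\widetilde U)\bigr)
 = \exp\bigl(-n^2 I_p(U)+o(n^2)\bigr),
\]
where $\Sball(\widetilde U)$ is an arbitrarily small cut-distance ball around $\widetilde U$. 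This immediately gives the lower bound~\eqref{eq:lowerGen}: for any open $G\subseteq \widetilde{\mathcal W}$ and any $\widetilde U\in G$, approximate $U$ by step graphons $U_k$ in cut distance with $I_p(U_k)\to I_p(U)$ (using a weak regularity-type partitioning of $U$ and continuity of $I_p$ along such approximations, which in turn follows from convexity of $h_p$ via Jensen).

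\textbf{Upper bound.} This is the main obstacle. For a closed $F\subseteq\widetilde{\mathcal{W}}$ and $\rho<\inf_F I_p$, I would use compactness of $(\widetilde{\mathcal W},\delta_\Box)$ to cover $F$ by finitely many cut-distance balls around step graphons $U_1,\ldots,U_N$, each of which, by lsc, can be arranged to have $I_p(U_s)\ge \rho-\varepsilon$. The probability of landing in the ball around $U_s$ is bounded by union-bounding over the (at most $k^n$) vertex-to-type assignments and applying the Bernoulli Sanov bound inside each block; this contributes an entropy term of only $n\log k = o(n^2)$ and so is dominated by the main exponent $n^2 I_p(U_s)$. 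Summing the $N=O(1)$ balls preserves the exponential rate, yielding~\eqref{eq:upperGen}.

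\textbf{Main difficulty.} The main technical obstacle is the upper bound for arbitrary closed sets, which genuinely needs the regularity-lemma-based compactness of $\widetilde{\mathcal{W}}$ and a careful book-keeping of the polynomial-in-$n$ factors (numbers of partitions, choices of types, and discretisation errors) so that they stay at the scale $e^{o(n^2)}$. Lower semi-continuity of $I_p$ in the cut metric, being weaker than $L^1$, is the other delicate point and is where the convex structure of $h_p$ is used essentially.
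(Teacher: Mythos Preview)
The paper does not give its own proof of Theorem~\ref{th:CV}: it is stated as a cited result of Chatterjee and Varadhan~\cite{ChatterjeeVaradhan11} (with a pointer to the exposition in Chatterjee's book~\cite{Chatterjee17ldrg}), and is used throughout as a black box. So there is no ``paper's proof'' to compare against; your outline is essentially a sketch of the original Chatterjee--Varadhan argument, which is indeed the standard route.

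That said, two points in your sketch deserve tightening. For the lower bound you write ``approximate $U$ by step graphons $U_k$ with $I_p(U_k)\to I_p(U)$''; what you actually use (and what convexity gives via Jensen) is the one-sided inequality $I_p(U_{\mathcal P})\le I_p(U)$ for the conditional-expectation step approximation $U_{\mathcal P}$, together with $\delta_\Box(U_{\mathcal P},U)\to 0$. You do not need (and in general do not have) convergence of $I_p$ along arbitrary cut-distance approximations. For the upper bound, covering $F$ by balls around fixed step graphons and then union-bounding over type assignments is not quite how the argument closes: the key step in Chatterjee--Varadhan is to apply weak regularity to the \emph{sampled} graph $G$, so that with probability $1$ the graphon $f^G$ is $\varepsilon$-close in $\delta_\Box$ to a block graphon with a bounded number of parts, and then to bound the probability that this block graphon lies in a neighbourhood of $F$ via the blockwise Bernoulli computation. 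The $k^n$ factor you mention comes from enumerating the vertex partitions of $G$, not from conditioning on sampled types (there are no types in $\mathbb G(n,p)$).
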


\subsection{Sampling from a graphon}

There are various important inhomogeneous random graph models that generalise the binomial random graph $\I G(n,p)$. The main objective of this paper is to initiate a systematic study of large deviations for the following model. 

For a graphon $W\in\C W$ and an integer $n\in\I N$, the \emph{$n$-vertex $W$-sample} $\I G(n,W)$ is the random graph on $[n]$ generated by first sampling
uniform elements $x_1,\dots,x_n\in [0,1]$ and then making each pair $\{i,j\}$ an edge with probability $W(x_i,x_j)$, where all choices are mutually independent. As it easily follows from standard results (e.g.\ \Lo{Lemma~10.23}), the distribution of $\I G(n,W)$ will remain the same if we replace $W$ by any weakly isomorphic graphon, so we could write $\I G(n,\T W)$. However, here and in  many similar situations, we prefer to talk about graphons rather than equivalence classes of graphons, just for brevity and notational convenience.

Let $\tR_{n,W}$ be the corresponding (discrete) measure on~$\tW$ where we take the equivalence class $\tf{G}\in\tW$ of the sampled graph~$G\sim \I G(n,W)$. When $W$ is the constant function $p$, we get exactly the binomial random graph $\I G(n,p)$ and $\tR_{n,W}=\tP_{n,p}$.

Let us give some definitions that are needed to state the first batch of our LDP results, namely that, informally speaking,  the only ``interesting'' speeds for the sequence $(\tR_{n,W})_{n\in\I N}$ of measures  are $\Theta(n)$ and $\Theta(n^2)$, as well as a general LDP for speed~$\Theta(n)$. Recall that we work on the graphon space $\tW$ endowed with the cut-distance $\delta_\Box$.

Let $W\in \C W$ be a graphon.
We assign to $W$ two $\delta_\Box$-closed subsets $\RN(W)$ and $\FORB(W)$ of $\tW$ that satisfy $W\in \RN(W)\subseteq \FORB(W)$ as follows.

The set $\FORB(W)$ consists by definition of those graphons $U$ such that $\tind(F,W)=0$ implies $\tind(F,U)=0$ for every graph $F$, where after relabelling $V(F)$ to be $[n]$ we define
\begin{equation}\label{eq:TInd}
\tind(F,U):=\int_{[0,1]^{{n}}} \prod_{\{i,j\}\in E(F)} U(x\ind{i},x\ind{j}) \prod_{\{i',j'\}\in E(\O F)} \left(1-U(x\ind{i'},x\ind{j'})\right) \dd x_1\dots\dd x_n
\end{equation}
to be the \emph{induced (homomorphism) density} of $F$ in~$U$. (Equivalently for $V(F)=[n]$, we can define $\tind(F,U)$ as the probability that $\I G(n,U)$ is equal to $F$.)
 In other words, $\FORB(W)$ consists of those graphons that do not contain induced subgraphs not present in~$W$. Since each function $\tind(F,\_)$ is well-defined and continuous on $(\tW,\delta_\Box)$ (see e.g.\ \Lo{Lemma~10.23}), the set $\FORB(W)$ is closed and depends only on the weak isomorphism class $\T W$.

The set $\RN(W)$ consists, roughly speaking, of graphons that can be formed from $W$ by changing the measure on the interval $[0,1]$.
Formally, suppose that $\mu$ is a Borel probability measure on $[0,1]$ with $\mu\ll\lambda$, that is, $\mu$
is absolutely continuous with respect to the Lebesgue measure~$\lambda$. Fix any 
measure-preserving map $\phi:([0,1],\lambda)\to([0,1],\mu)$, which exists by the Isomorphism Theorem for standard probability atomless spaces, and define $W_\mu\in \W$ by
 \beq{eq:WMu}
 W_\mu(x,y):=W(\phi(x),\phi(y)),\quad x,y\in [0,1].
 \eeq
 While this definition depends on the choice of $\phi$, the weak isomorphism class of $W_\mu$ and the other properties of $W_\mu$ that we use in this paper 
 do not. For example, $\tind(F,W_\mu)$ is given by the formula in~\eqref{eq:TInd} except $\dd x_i$ is replaced by $\dd\mu(x_i)$.
 The reader familiar with the theory of generalised graphons (in the sense of~\Lo{Section~13.1}) may prefer to define $W_\mu$ as the generalised graphon obtained from $W$ by changing the underlying probability space from $([0,1],\lambda)$ to $([0,1],\mu)$. We define $\RN(W)$ to  be the $\delta_\Box$-closure of the set of graphons $U$ for which there is a  Borel probability measure $\mu$ on $[0,1]$ with $\mu\ll \lambda$ such that $U$ is  weakly isomorphic to~$W_\mu$.

\hide{The set $\RN(W)$ consists, roughly speaking, of graphons that can be formed from $W$ by changing the measure on the interval $[0,1]$.
Formally, we define $\RN(W)$ to be 
the $\delta_\Box$-closure of 
the set of graphons $U$ for which there are a Borel probability measure $\mu$ on $[0,1]$ with $\mu\ll\lambda$ (that is, $\mu$
is absolutely continuous with respect to the Lebesgue measure~$\lambda$) and 
a measure-preserving map $\phi:([0,1],\lambda)\to([0,1],\mu)$, such that for 
a.e.\ $(x,y)\in [0,1]^2$ we have $U(x,y)=W(\phi(x),\phi(y))$. 
Note that, by \Lo{Theorem 13.9}, the last condition is equivalent to $U$ being  weakly isomorphic to the generalised graphon 
 \beq{eq:WMu}
 W_\mu:=(W,[0,1],\mu)
 \eeq
  in the sense of~\Lo{Section~13.1}. 
}

\hide{
The set $\RN(W)$ consists, roughly speaking, of graphons that can be formed from $W$ by changing the measure on the interval $[0,1]$.
Formally, suppose that $\mu$ is a Borel probability measure on $[0,1]$ with $\mu\ll\lambda$, that is, $\mu$
is absolutely continuous with respect to the Lebesgue measure~$\lambda$.
Let $W_\mu$ denote the generalised graphon $(W,[0,1],\mu)$ in the sense of~\Lo{Section~13.1} (see also the short discussion inside the proof of Lemma~\ref{lm:AlmostMP}).%
\hide{For example, the $n$-vertex random sample from $W_\mu$ is obtained
by taking $\mu$-distributed $x_1,\dots,x_n\in [0,1]$ and making $\{i,j\}$ an edge with probability $W(x_i,x_j)$ with all choices being independent; also, $\tind(F,W_\mu)$ is defined by the same formula as in~\eqref{eq:TInd} except $\lambda$ is replaced by~$\mu$.
} 
Even though, $W_\mu$ is not formally an element of $\C W$, it is known that there is $U\in \C W$ that is \emph{weakly isomorphic} to $W_\mu$, meaning that $\delta_\Box(U,W_\mu)=0$ for the appropriate generalisation of the cut-distance~$\delta_\Box$. We define $\RN(W)$ to  be the $\delta_\Box$-closure of the set of graphons $U$ for which there is a  Borel probability measure $\mu$ on $[0,1]$ with $\mu\ll \lambda$ such that $U$ is  weakly isomorphic to~$W_\mu$.
}

For $Y\subseteq \tW$ we denote as $\chi_Y$ the function that is $0$ on $Y$ and $\infty$ otherwise.
In the case when $Y=\{W\}$ is a singleton, we write simply $\chi_W$.
Note that $\chi_Y$ is lsc whenever $Y$ is $\delta_\Box$-closed.

Let us illustrate the above notions on a simple example. Namely, suppose that $W$ is a \emph{$k$-step graphon}, meaning that there is a partition $[0,1]=A_1\cup\dots\cup A_k$ into measurable sets such that $W$ is a constant $p_{i,j}$ on each product $A_i\times A_j$, $i,j\in [k]$. 
Let us assume additionally that each $A_i$ has positive measure.
It is easy to see that $\RN(W)$ consists exactly of weak equivalence classes of $k$-step graphons that can be produced from $W$ by stretching or shrinking $A_1,\dots,A_k$.
In other words, every element of $\RN(W)$ comes from a $k$-step graphon $U$ with parts $B_1,\dots,B_k$ such that the (constant) value of $U$ on $B_i\times B_j$ is the same as the value of $W$ on $A_i\times A_j$ for every $i,j\in [k]$. The structure of the set $\FORB(W)$ is more complicated. For example, if $p_{i,i}\not=0$ for some $i\in [k]$, then every graph has positive induced density in $W$ and thus $\FORB(W)=\tW$ consists of all graphons. If $p_{i,i}=0$ and $p_{i,j}\in (0,1)$ for every $i\not=j$ then $\FORB(W)$ can be shown to consist of  $\widetilde U\in\tW$ such that $U$ has \emph{chromatic number} at most $k$ (meaning that there is 
a  measurable partition $B_1\cup\dots\cup B_k$ of $[0,1]$  such that $U$ is 0 a.e.\ on all diagonal products $B_i\times B_i$).  Suppose that we consider an LDP for $(\tR_{n,W})_{n\in\I N}$ for some speed~$s$. Informally, the price in probability to change substantially the number of sampled $x_i\in [0,1]$ that are in some part $A_t$ is $\me^{-\Theta(n)}$ while analogous anti-concentration probability for edge rounding is~$\me^{-\Theta(n^2)}$. 
Thus at speed $s(n)= o(n)$, we do not see any deviations in the limit and the rate function is $\chi_W$. For speed $s(n)\in \omega(n)\cap o(n^2)$ (that is, $s(n)/n\to\infty$ and $s(n)/n^2\to 0$ as $n\to\infty$), any vertex distribution per parts $A_t$ is ``for free'' (while edge rounding deviations are still ``too expensive''), so LDP sees exactly $\RN(W)$. Likewise, for speed $\omega(n^2)$, every possible sampled graph must admit a vertex partition $V_1\cup\dots\cup V_k$ such that for each $(i,j)\in [k]^2$ with $p_{i,j}=0$ (resp.\ $p_{i,j}=1$) all pairs in $V_i\times V_j$ are non-edges (resp.\ edges, apart the diagonal pairs  if $i=j$) and each such graph occurs with probability at least $\me^{-O(n^2)}$; the corresponding set of graphons in the limit as $n\to\infty$ is easily seen to be $\FORB(W)$.

Staying with the above $k$-step example $W$, it is not hard to derive from the classical theorem of Sanov~\cite{Sanov57}
that $(\tR_{n,W})_{n\in\I N}$ admits an LDP for speed $n$ with rate function $I(U)$ which is equal to $\infty$ unless $U$ is a $k$-step graphon, say with steps $B_1,\dots,B_k$, when 
$$
I(U):=\min_\sigma \sum_{i\in [k]} \lambda(B_{\sigma(i)})\log\left(\frac{\lambda(B_{\sigma(i)})}{\lambda(A_i)}\right),
$$
 with the minimum taken over all permutations $\sigma$ of $[k]$ such that,  for all $i,j\in [k]$, the value of $U$ on $B_{\sigma(i)}\times B_{\sigma(j)}$ is the same as the value of $W$ on $A_i\times A_j$ (and $I(U)$ is defined to be $\infty$ if no such $\sigma$ exists).
 The natural (and correct) guess of the rate function for general $W\in\W$  for speed $s(n)=n$ is
\beq{eq:KW}
K_{W}(\T U):=\inf \left\{ \int_{[0,1]} \L{\frac{\dd\mu}{\dd\lambda}} \dd\lambda: \delta_\Box(U,W_\mu)=0, \ \mu \ll \lambda \right\},\quad U\in\W,
 \eeq
where $\frac{\dd\mu}{\dd\lambda}$ is the Radon--Nikodym derivative and $\mu$ is taken over Borel probability measures on $[0,1]$ absolutely continuous with respect to the Lebesgue measure $\lambda$ (and, as everywhere in the paper, $\L{s}:=x\log x$ is the function defined in~\eqref{eq:L}).
  Proposition~\ref{pr:K}\ref{it:KLsc} shows that the function $K$ is lsc on $(\tW,\delta_\Box)$.

Our first result settles LDP for $\tR_{n,W}$ for every speed other than $\Theta(n^2)$. We use the standard asymptotic notation ($o$, $O$, $\omega$, and $\Theta$) as $n\to\infty$.

\begin{theorem}\label{thm:GeneralSpeeds}
Take any $W\in\C W$ and any function $s:\I N\to(0,\infty)$ with $s(n)\to\infty$ as $n\to\infty$. We consider the existence of an LDP for the sequence $(\tR_{n,W})_{n\in\I N}$ of measures on the topological space $(\tW,\delta_\Box)$ with speed~$s$.
\begin{enumerate}[(a)]
	\item\label{it:GS1} If $s(n)= o(n)$, then there is an LDP with rate function~$\chi_W$.
    \item\label{it:GSn} If $s(n)=(c+o(1))n$ for some $c\in (0,\infty)$ then there is an LDP with rate function~$\frac1c\,K_{W}$.
	\item\label{it:GS2} If $s(n)\in o(n^2)\cap \omega(n)$, then there is an LDP with rate function~$\chi_{\RN(W)}$.
	\item\label{it:GS3} If $s(n)= \omega(n^2)$, then there is an LDP with rate function~$\chi_{\FORB(W)}$.
\end{enumerate}
\end{theorem}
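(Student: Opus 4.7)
My approach organises around a two-layer decomposition of $\I G(n,W)$: the \emph{vertex layer} is the i.i.d.\ uniform sample $x_1,\dots,x_n\in[0,1]$ with empirical measure $\mu_n:=\frac1n\sum_{i=1}^n\delta_{x_i}$, and, conditional on the $x_i$'s, the \emph{edge layer} flips each pair $\{i,j\}$ independently with probability $W(x_i,x_j)$. Two quantitative inputs drive everything: a Polish-space version of Sanov's theorem (Eichelsbacher's extension) giving an LDP for $(\mu_n)_n$ at speed $n$ with rate $H(\mu\|\lambda):=\int_{[0,1]}\L{\dd\mu/\dd\lambda}\,\dd\lambda$; and standard cut-norm concentration for conditional edge sampling,
\[
\Pr\!\bigl[\,\delta_\Box\!\bigl(\f{\I G(n,W)},\,W_{\mu_n}\bigr)\ge\varepsilon\,\bigr]\le \exp\bigl(-\Omega(n^2\varepsilon^2)\bigr),
\]
valid for $\varepsilon\gg n^{-1/2}$. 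Thus vertex fluctuations cost $\Theta(n)$ and edge fluctuations cost $\Theta(n^2)$, and the four regimes of the theorem correspond exactly to the four positions of $s(n)$ relative to these scales.

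For \ref{it:GS1}, both layers concentrate enough to give $\delta_\Box(\f{\I G(n,W)},W)\to 0$ at rate $e^{-\Omega(n)}=e^{-\omega(s(n))}$, which handles any closed $F\not\ni\widetilde W$, while the lower bound on open $G\ni\widetilde W$ is trivial. For \ref{it:GSn} and \ref{it:GS2}, I choose $\varepsilon_n\downarrow 0$ with $n^2\varepsilon_n^2=\omega(s(n))$ (possible since $s(n)=o(n^2)$), making the edge layer invisible at the LDP scale, and push Sanov's LDP through the continuous map $\mu\mapsto\widetilde{W_\mu}$ by the contraction principle. In \ref{it:GSn} this contracts to rate $K_W$ at speed $n$, hence $\tfrac1c K_W$ at speed $(c+o(1))n$, with lsc of $K_W$ supplied by Proposition~\ref{pr:K}. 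In \ref{it:GS2}, every finite $H(\mu\|\lambda)$ becomes $o(s(n))$, collapsing the rate to $\chi_{\RN(W)}$: the lower bound on an open $G$ meeting $\RN(W)$ at some $\widetilde{W_\mu}$ with $\dd\mu/\dd\lambda$ bounded follows from $\Pr[\mu_n\approx\mu]\ge e^{-O(n)}=e^{-o(s(n))}$, while the upper bound on a closed $F$ disjoint from $\RN(W)$ uses that $\widetilde{W_{\mu_n}}$ always belongs to $\RN(W)$ (atomic $\mu_n$ is a $\delta_\Box$-limit of $\mu\ll\lambda$ by convolution) together with compactness of $(\tW,\delta_\Box)$, reducing the event to the edge bound $e^{-\Omega(n^2\varepsilon^2)}=e^{-\omega(s(n))}$.

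Part \ref{it:GS3} is qualitatively different since at speed $\omega(n^2)$ both layers are free and only hard structural obstructions survive. For the upper bound, if $F$ is closed and disjoint from $\FORB(W)$ then compactness of $(\tW,\delta_\Box)$ combined with continuity of each $\tind(H,\cdot)$ yields a single finite graph $H$ with $\tind(H,W)=0$ and $\inf_{\widetilde U\in F}\tind(H,U)\ge\varepsilon>0$; but $\tind(H,W)=0$ is exactly $\Pr[\I G(|V(H)|,W)=H]=0$, so $\I G(n,W)$ is a.s.\ induced-$H$-free, and a direct count shows $\tind(H,\f{G})=O(1/n)$ for every induced-$H$-free graph $G$ on $n$ vertices, giving $\tR_{n,W}(F)=0$ once $n$ is large. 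The \emph{main obstacle} is the matching lower bound: for every $\widetilde U\in\FORB(W)$ and every cut neighborhood $G$ of $\widetilde U$ one must produce $\tR_{n,W}(G)\ge e^{-O(n^2)}=e^{-o(s(n))}$. My plan is to approximate $W$ by step graphons $W_k\to W$ in $\delta_\Box$, invoke the paper's quadratic-speed LDP for step graphons (a Chatterjee--Varadhan-type rate finite on all of $\FORB(W_k)$) to realise $\widetilde U$ as a $\delta_\Box$-limit of graphs with $\I G(n,W_k)$-probability $\ge e^{-O(n^2)}$, and transfer this bound to $\tR_{n,W}$ using continuity of $\tind(G_n,\cdot)$ in the graphon argument. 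The delicate point is matching $\FORB(W)$ by $\FORB(W_k)$ uniformly enough that shrinking neighbourhoods of $\widetilde U$ remain accessible in the limit $k\to\infty$, and justifying the transfer of the quantitative $e^{-O(n^2)}$ lower bound under the step-graphon approximation.
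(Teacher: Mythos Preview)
Your treatment of \ref{it:GS1}--\ref{it:GS2} is essentially the paper's own argument: the paper packages the edge-layer concentration as an explicit exponential equivalence between $(\tR_{n,W})_n$ and the weighted-graph samples $(\tH_{n,W})_n$ (Lemma~\ref{lm:ExpEq}), proves the LDP for the latter via Eichelsbacher plus Dawson--G\"artner plus contraction along $\mu\mapsto\T{W_\mu}$ (Theorem~\ref{thm:GeneralSpeedsH}\ref{it:GSH2}), and then reads off \ref{it:GS1} and \ref{it:GS2} from the speed-$n$ LDP exactly as you indicate. One technical point you glide over: continuity of $\mu\mapsto\T{W_\mu}$ requires the $\tau^\infty$-topology (product integrals of all orders), not just the weak topology on $\mathcal{P}([0,1])$, since homomorphism densities are integrals against $\mu^{\oplus k}$; this is why the full Eichelsbacher result, rather than plain Sanov, is needed.

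The genuine gap is your lower bound for \ref{it:GS3}. The transfer step ``continuity of $\tind(G_n,\cdot)$ in the graphon argument'' cannot work: for an $n$-vertex graph $G_n$, the map $V\mapsto\tind(G_n,V)$ is indeed $\delta_\Box$-continuous, but with a modulus blowing up with $n$, so $\delta_\Box(W,W_k)$ small gives you no control over the ratio $\tind(G_n,W)/\tind(G_n,W_k)$ at the scale $e^{-\Theta(n^2)}$ you need. Worse, if $W$ takes the value $0$ (or $1$) on a positive-measure set but your step approximant $W_k$ does not, then graphs $G_n$ realising your target $\T U$ in $\I G(n,W_k)$ may satisfy $\tind(G_n,W)=0$ exactly, so the bound transfers to nothing. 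The paper (Step~(II) of Section~\ref{se:RnW}) avoids any approximation of $W$ and argues directly: given $\T U\in\FORB(W)$ and a neighbourhood, pick a finite graph $G$ with $\tind(G,U)>0$ and $\delta_\Box(\f G,U)$ small; since $\T U\in\FORB(W)$ forces $\tind(G,W)>0$, a Lebesgue density argument locates small disjoint intervals $J_1,\dots,J_{|V(G)|}$ in $[0,1]$ on whose product $W$ is bounded away from $\{0,1\}$ in the pattern dictated by $G$; one then shows that the event ``the $x_i$ fall evenly into the $J$'s and the edge-rounding agrees with a blow-up of $G$ on almost all crossing pairs'' has probability at least $e^{-O(n^2)}$ and lands in the neighbourhood. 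This direct route is where the definition of $\FORB(W)$ is actually used, and there is no substitute for working with $W$ itself.

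As a minor correction to your \ref{it:GS3} upper bound: compactness of a closed $F$ disjoint from $\FORB(W)$ gives \emph{finitely many} witnessing graphs $H_1,\dots,H_m$, not a single one; different points of $F$ may be excluded from $\FORB(W)$ by different forbidden induced subgraphs. The paper handles this with the obvious finite subcover (Step~(I) of Section~\ref{se:RnW}).
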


It is easy to see that Theorem~\ref{th:CV} implies an LDP with rate function $\frac1c\, h_p$ if $W=p$ a.e.\ and $s(n)=(c+o(1))n^2$.

The results of Theorem~\ref{thm:GeneralSpeeds} in the cases when $s(n)=o(n^2)$  are proved by first establishing an LDP for \emph{weighted graph sampling}, where we take independent uniform elements $x_1,\dots,x_n\in [0,1]$ and output the corresponding edge weighted complete graph $H$ on $[n]$, where $w_H(\{i,j\})$, the weight on the edge $\{i,j\}$, is equal to $W(x_i,x_j)$. 
Let $(\tH_{n,W})_{n\in\I N}$ be the corresponding distribution on $\tW$ induced by taking the equivalence class $\tf{H}\in \tW$ of the sampled weighted graph~$H$, where $\f{H}$ is the $n$-step graphon with parts of measure $1/n$ whose values are given by the weights of~$H$. Our Lemma~\ref{lm:ExpEq} implies via basic results on exponential equivalence that, for speeds $o(n^2)$, an LDP for $(\tH_{n,W})_{n\in\I N}$ automatically gives an LDP for $(\tR_{n,W})_{n\in\I N}$ with the same rate function, and vice versa. Working with $(\tH_{n,W})_{n\in\I N}$ is much more convenient and we completely resolve the LDP problem for this model in Theorem~\ref{thm:GeneralSpeedsH}. In general, the original sequence 
$(\tR_{n,W})_{n\in\I N}$ exhibits a different LDP behaviour than $(\tH_{n,W})_{n\in\I N}$ for speeds $\Omega(n^2)$ since the second  step (edge rounding) may result in $\Omega(n^2)$ ``atypically'' rounded edges with probability $\me^{-\Theta(n^2)}$. In the case when $W$ is $\{0,1\}$-valued (when each edge weight of $H\sim \tH_{n,W}$ is 0 or 1), the edge rounding step involves no randomness and $\tR_{n,W}=\tH_{n,W}$. Thus Theorem~\ref{thm:GeneralSpeedsH} fully resolves the LDP problem for $(\tR_{n,W})_{n\in\I N}$ for such graphons. More generally, the LDP problem for $(\tR_{n,W})_{n\in\I N}$ is fully resolved for any $W$ with $\RN(W)=\FORB(W)$, see the discussion in Section~\ref{se:concluding}.



\hide{
\begin{theorem}\label{thm:RNo(n)}
Let $W\in\C W$ and $s:\I N\to(0,\infty)$.
If $s(n)\in \omega(n)$, then $(\tH_{n,W})_{n\in\I N}$ on $(\tW,\delta_\Box)$ satisfies an LDP with speed $s(n)$ and rate function~$\chi_{\RN(W)}$.
\end{theorem}

\begin{theorem}\label{thm:speed 1/n}
Let $W\in\C W$.
Then the function $K_W:\tW\to[0,\infty]$ is lower semi-continuous with respect to the metric $\delta_\Box$.
Moreover, the sequences of measures $(\tR_{n,W})_{n\in\I N}$ and $(\tH_{n,W})_{n\in\I N}$ on $(\tW,\delta_\Box)$ satisfy an LDP with speed $n$ and rate function~$K_{W}$.
\end{theorem}

Together with Theorems~\ref{thm:GeneralSpeeds},~\ref{th:ourLDP} this completely describes LDPs in step graphon random graph model and $\{0,1\}$-graphon random graph models.
The latter case follows from Theorem~\ref{thm:RNo(n)} together with the fact that $\tR_{n,W}=\tH_{n,W}$ for every $n\in \I N$.
}

\subsection{LDP for step graphons for speed $\Theta(n^2)$}\label{Intro:n^2}

Here (in Theorem~\ref{th:ourLDP}) we prove an LDP for $(\tR_{n,W})_{n\in\I N}$ and speed $(c+o(1))n^2$,
when $W$ is a step graphon. Suppose that $W$ has $k$ non-null parts $A_1,\dots,A_k$ and its values are encoded by a symmetric  $k\times k$-matrix $\V p=(p_{i,j})_{i,j=1}^k$. 
It comes as no surprise that if we consider large deviations for $(\tR_{n,W})_{n\in\I N}$ at speed 
$\Theta(n^2)$ then the rate function depends only on $(p_{i,j})_{i,j\in [k]}$ but not on the (non-zero) measures of the parts $A_i$.

We need some preparation to define the rate function for the given matrix $\V p$. For a non-zero real $k$-vector $\V\alpha=(\alpha_1,\dots,\alpha_k)\in[0,\infty)^k\NoZero$, let $(\cI{\V\alpha}{1},\dots,\cI{\V\alpha}{k})$ denote the partition of $[0,1]$ into consecutive intervals such that each interval $\cI{\V\alpha}{i}$ has length $\alpha_i/\|\V\alpha\|_1$. Also, define the function $J_{\V\alpha,\V p}:\tW\to [0,\infty)$ by
 \beq{eq:JAlphaP}
J_{\V\alpha,\V p}(\T U):=\inf_{V\in \T U}\,  \frac12\sum_{i,j\in [k]} \int_{\cI{\V\alpha}{i}\times \cI{\V\alpha}{j}} h_{p_{i,j}}(V(x,y))\dd x\dd y,\quad U\in {\C W}.
 \eeq
 Note that the function $J_{\V\alpha,\V p}$ will not change if we multiply the vector $\V\alpha$ by any positive scalar. Finally, define
	\beq{eq:R}
	R_{\V p}(\T U):=\inf_{\V\alpha\in [0,1]^k\atop \alpha_1+\ldots+\alpha_k=1} J_{\V\alpha,\V p}(\T U),\quad U\in\C W.
	\eeq

\begin{theorem}\label{th:ourLDP}
	Let $W$ be a $k$-step graphon with $k$ non-null parts whose values are encoded by a symmetric $k\times k$ matrix $\V p\in [0,1]^{k\times k}$.	Let $s:\I N\to (0,\infty)$ be any function such that, for some $c\in (0,\infty)$, we have $s(n)=(c+o(1))n^2$.

Then the function $R_{\V p}:\tW\to [0,\infty]$ is lower semi-continuous with respect to the metric~$\delta_\Box$. Moreover, the sequence of measures $(\tR_{n,W})_{n\in\I N}$ on $(\tW,\delta_\Box)$ satisfies an LDP with speed $s(n)$ and rate function~$\frac1c\,R_{\V p}$.
	\end{theorem}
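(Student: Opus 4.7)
The plan is to condition on the empirical type distribution of the sampled vertices and reduce the problem to the LDP for stochastic $k$-block models proved by Borgs--Chayes--Gaudio--Petti--Sen (BCGPS). Without loss of generality we may take $c=1$: rescaling speed by a positive constant only rescales the rate function. Let $\alpha_i^*:=\lambda(A_i)>0$ be the measures of the blocks of $W$. For $\V x=(x_1,\dots,x_n)$ uniform in $[0,1]^n$, the type vector $\V N=(N_1,\dots,N_k)$, with $N_i:=|\{t:x_t\in A_i\}|$, is $\mathrm{Mult}(n;\V\alpha^*)$-distributed, and conditional on $\V N=\V n$ the resulting graph $\I G(n,W)$ is distributed as the stochastic block model $\mathrm{SBM}(\V n,\V p)$ (up to a type-preserving relabelling of $[n]$). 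Thus
\[
\tR_{n,W}(A)=\sum_{\V n\in\I N^k:\,\|\V n\|_1=n}\iP[\V N=\V n]\,\iP[\mathrm{SBM}(\V n,\V p)\in A],\quad A\subseteq\tW.
\]

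For the upper bound, take $F\subseteq\tW$ closed. Each term $\iP[\V N=\V n]$ is at most $1$, so the $\Theta(n)$-scale cost of type deviations is invisible at speed $n^2$; the sum has at most $(n+1)^k$ summands, which is also negligible at this speed. Applying the BCGPS upper bound to each $\iP[\mathrm{SBM}(\V n,\V p)\in F]$ and checking that the BCGPS rate function for $\mathrm{SBM}(\V n,\V p)$, expressed on the graphon $\f G$ associated with the sample, coincides with $J_{\V n/n,\V p}$ yields
\[
\limsup_{n\to\infty}\frac{1}{n^2}\log\tR_{n,W}(F)\le -\inf_{\V\alpha}\inf_{\T U\in F}J_{\V\alpha,\V p}(\T U)=-\inf_{\T U\in F}R_{\V p}(\T U).
\]
For the lower bound, take $G\subseteq\tW$ open, fix $\T U\in G$, and choose $\V\alpha$ achieving $R_{\V p}(\T U)$ within $\e$. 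Pick a sequence of integer vectors $\V n=\V n(n)$ with $\V n/n\to\V\alpha$; standard estimates give $\iP[\V N=\V n]\ge n^{-O(1)}$, again invisible at speed $n^2$. The BCGPS lower bound applied to $\mathrm{SBM}(\V n,\V p)$ and the neighbourhood $G$ then produces
\[
\liminf_{n\to\infty}\frac{1}{n^2}\log\tR_{n,W}(G)\ge -J_{\V\alpha,\V p}(\T U)\ge -R_{\V p}(\T U)-\e,
\]
and taking $\e\to0$ and the supremum over $\T U\in G$ concludes the LDP.

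It remains to check lower semicontinuity of $R_{\V p}$ on $(\tW,\delta_\Box)$. The functional $\T U\mapsto J_{\V\alpha,\V p}(\T U)$ is lsc for each fixed $\V\alpha$ (a standard consequence of the convexity of $h_{p_{i,j}}$ and the way cut-distance controls integrals against indicators of product sets, as in the BCGPS analysis). One strengthens this to joint lower semicontinuity in $(\V\alpha,\T U)$ on the compact simplex times $\tW$ by noting that shifting the interval endpoints $\cI{\V\alpha}{i}$ by $o(1)$ changes each integral by $o(1)$ (the integrand is uniformly bounded by $\max_{\V p}\|h_{p_{i,j}}\|_\infty<\infty$ after suitable truncation). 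A standard compactness argument then shows that the infimum $R_{\V p}(\T U)=\inf_{\V\alpha}J_{\V\alpha,\V p}(\T U)$ is attained and is lsc in~$\T U$.

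The main obstacle is the dictionary between the two setups: verifying that the BCGPS rate function for $\mathrm{SBM}(\V n,\V p)$ indeed identifies with $J_{\V n/n,\V p}$ when pulled back to $\tW$, and carrying the topological estimates (in particular joint lsc and approximate continuity of $J_{\V\alpha,\V p}$ in $\V\alpha$) through with enough uniformity to interchange infima in the lower bound. The multinomial concentration and the outer sum are routine once the LDP for the conditional SBM law is in place and the rate functions have been aligned.
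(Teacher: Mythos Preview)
Your overall strategy---condition on the type vector and reduce to the stochastic block model LDP---is the same as the paper's. However, two of your steps contain genuine gaps.

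\textbf{Lower semicontinuity.} Your argument relies on the claim that shifting the interval endpoints $\cI{\V\alpha}{i}$ by $o(1)$ changes each integral by $o(1)$ because the integrand is bounded. This is false: when $p_{i,j}\in\{0,1\}$ the function $h_{p_{i,j}}$ takes the value $\infty$, and no truncation fixes this without changing the rate function. In fact the paper explicitly remarks (just before Lemma~\ref{lm:ContOfJ}) that $J_{\V\alpha,\V p}(\T V)$ is \emph{not} in general continuous in $\V\alpha$, giving a concrete counterexample. The paper proves lsc by a different route: it introduces a compact metric space $\tWk$ of $k$-coloured graphons, shows that $I^{(k)}_{\V p}$ is lsc there, and then obtains $R_{\V p}$ as the infimum of this lsc function over the (compact) fibres of a continuous ``forget the colouring'' map $\Gamma:\tWk\to\tW$, whence each level set of $R_{\V p}$ is the continuous image of a compact set.

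\textbf{Uniformity in the BCGPS input.} You write ``applying the BCGPS upper bound to each $\iP[\mathrm{SBM}(\V n,\V p)\in F]$'', but BCGPS (Theorem~\ref{th:BCGPS}) is an \emph{asymptotic} LDP for a fixed integer ratio vector $\V\alpha$; it gives no finite-$n$ bound uniform over all $\V n$ with $\|\V n\|_1=n$. Since the maximising $\V n(n)$ varies with $n$ and need not converge, you cannot conclude directly. The paper first extends BCGPS to arbitrary converging sequences $\V a_n/n\to\V\alpha$ (Theorem~\ref{th:GenLDP}), using a coupling lemma (Lemma~\ref{lm:DifferentRatios}) and a one-sided continuity estimate for $J$ in $\V\alpha$ (Lemma~\ref{lm:ContOfJ}). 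Only then does it take, for each $n$, a single maximising $\V a_n$, extract a subsequence with $\V a_{n_i}/n_i\to\V\alpha$ by compactness of the simplex, fill in the gaps, and apply Theorem~\ref{th:GenLDP}. The same extension is needed in your lower bound when the near-optimal $\V\alpha$ is irrational. (A minor point: the multinomial probability $\iP[\V N=\V n]$ is in general $e^{-\Theta(n)}$, not $n^{-O(1)}$; this does not affect the conclusion at speed $n^2$, but your stated bound is incorrect.)
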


For $k=1$, this gives the LDP result of Chatterjee and Varadhan~\cite{ChatterjeeVaradhan11} (that is, Theorem~\ref{th:CV}).

\subsection{Stochastic block models}\label{se:SBM}

Sampling from a step graphon is closely related to a \emph{stochastic block model}, which provides another important generalisation of the binomial random graph $\I G(n,p)$ and which we define now. Here we are given
a  symmetric $k\times k$ matrix $\V p=(p_{i,j})_{i,j\in [k]}\in [0,1]^{k\times k}$ with entries in~$[0,1]$ and an integer vector $\V a=(a_1,\dots,a_k)\in\NZ^k$, where 
$
\NZ:=\{0,1,2,\ldots\}
$
 denotes the set of non-negative integers. The corresponding probability distribution on $\tW$ is defined as follows.  Let $n$ be equal to $\|\V a\|_1=a_1+\dots+a_k$ and  let $(A_1,\dots, A_k)$ be the partition of $[n]$ into $k$ consecutive intervals with $A_i$ having $a_i$ elements. The random graph $\I G(\V a,\V p)$ on $[n]$ is produced by making each pair $\{x,y\}$ of $[n]$ an edge with probability $p_{i,j}$ where $i,j\in [k]$ are the indices with $x\in A_i$ and $y\in A_j$, with all choices made independently of each other. Output the weak isomorphism class $\tf{G}\in\tW$ of the graphon $\f{G}$ of the generated graph~$G\sim \I G(\V a,\V p)$ on~$[n]$. Let $\tP_{\V a,\V p}$ be the correponding (discrete) probability measure on~$\tW$.

Let us point out the difference between the random graphs $\I G((a_1,\dots,a_k),(p_{i,j})_{i,j\in [k]})$ and $\I G(a_1+\dots+a_k,W)$ where $W$ is a $k$-step graphon whose values are encoded by a matrix $(p_{i,j})_{i,j\in [k]}$.
In the former model, we have exactly $a_i$ vertices in the $i$-th block for each $i\in [k]$. In the latter model, each vertex is put into one of the $k$ blocks with the probabilities given by the measures of $A_1,\dots,A_k$, independently of the other vertices; thus the number
of vertices in each block is binomially distributed. Of course, if $k=1$, then both models coincide with the binomial random graph $\I G(a_1,p_{1,1})$.

Borgs, Chayes, Gaudio, Petti and Sen~\cite{BCGPS} extended Theorem~\ref{th:CV} to $k$-block stochastic models as follows. For a real-valued function $I$ on a topological space $X$, let its \emph{lower semi-continuous
	regularization} be
\beq{eq:lscR}
 I_{\mathrm{lsc}}(x):=\sup\left\{\inf_{y\in G} I(y)\mid \mbox{$G\subseteq X$ is open and }G\ni x\right\},\quad x\in X.
\eeq 
 As it is well-known (see e.g.\ \RS{Lemma~2.8}), 
 $I_{\mathrm{lsc}}$ is lower semi-continuous  and, in fact, $I_{\mathrm{lsc}}$ is the largest lsc function with $I_{\mathrm{lsc}}\le I$. 

 \begin{theorem}[Borgs, Chayes, Gaudio, Petti and Sen~\cite{BCGPS}]
 	\label{th:BCGPS}
 	Let $\V\alpha\in \NZ^k\NoZero$ be a non-zero integer $k$-vector and let $\V p\in [0,1]^{k\times k}$ be a symmetric $k\times k$ matrix. Then the sequence of measures $(\tP_{n\V\alpha,\V p})_{n\in\I N}$ on $(\tW,\delta_\Box)$ satisfies an LDP with speed $(n\,\|\V\alpha\|_1)^2$ and rate function~$(J_{\V\alpha,\V p})_{\mathrm{lsc}}$.\qqed
 	\end{theorem}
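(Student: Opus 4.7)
The plan is to establish the LDP first on a finer ``labelled'' topology for graphons and then transfer to $(\tW,\delta_\Box)$ via a contraction argument. Fix the partition $[0,1]=\cI{\V\alpha}{1}\cup\dots\cup\cI{\V\alpha}{k}$ from the statement. For each $n$, represent $\I G(n\V\alpha,\V p)$ as a labelled step graphon $\f{G}_n\in\C W$ by placing the $j$-th vertex of the $i$-th block on a subinterval of $\cI{\V\alpha}{i}$ of length $1/(n\|\V\alpha\|_1)$, and let $Q_n$ be the resulting law on $\C W$ equipped with a fine topology $\tau$ (the weak-$*$ topology of $L^\infty([0,1]^2)$, restricted to $\C W\subset[0,1]^{[0,1]^2}$) that is strictly finer than $\delta_\Box$.

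Step~1 (labelled LDP): I would show that $(Q_n)_{n\in\I N}$ satisfies an LDP on $(\C W,\tau)$ with speed $(n\|\V\alpha\|_1)^2$ and rate function
\[
F(V):=\tfrac12\sum_{i,j\in[k]}\int_{\cI{\V\alpha}{i}\times\cI{\V\alpha}{j}}h_{p_{i,j}}(V(x,y))\dd x\dd y.
\]
Within each block $\cI{\V\alpha}{i}\times\cI{\V\alpha}{j}$ the roughly $n^2\alpha_i\alpha_j/\|\V\alpha\|_1^2$ entries of the adjacency matrix are independent $\mathrm{Bern}(p_{i,j})$ (respecting the symmetry on the diagonal), so a block-by-block application of Sanov's theorem, refined so as to retain the spatial organisation inside each block rather than only its density, yields the joint LDP with the asserted rate. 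Exponential tightness is automatic because $\C W$ is already weak-$*$ compact, and the convexity of each $h_{p_{i,j}}$ yields weak-$*$ lower semi-continuity of $F$ by the standard theory of convex integral functionals.

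Step~2 (contraction and regularisation): The composition of the identity $(\C W,\tau)\to(\C W,\delta_\Box)$ with the quotient $(\C W,\delta_\Box)\to(\tW,\delta_\Box)$ is continuous, and $\tP_{n\V\alpha,\V p}$ is the push-forward of $Q_n$ under this map. By the contraction principle, $(\tP_{n\V\alpha,\V p})_{n\in\I N}$ satisfies an LDP on $(\tW,\delta_\Box)$ with speed $(n\|\V\alpha\|_1)^2$ and rate function $\widehat I(\T U)=\inf\{F(V):V\in\T U\}=J_{\V\alpha,\V p}(\T U)$. The fibrewise infimum $J_{\V\alpha,\V p}$ need not be $\delta_\Box$-lsc, so I would apply the lsc-regularisation identity~\eqref{eq:lscR}; by a standard fact, the LDP bounds~\eqref{eq:lowerGen}--\eqref{eq:upperGen} are unaffected by replacing a rate function by its lsc regularisation, giving the statement exactly as phrased.

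The hardest point is reconciling the weak-$*$ (or $L^1$) topology on labelled graphons with the cut-distance on $\tW$: one needs a topology $\tau$ which is \emph{simultaneously} (a)~fine enough that Sanov's theorem resolves $F$ itself rather than only its density projections per block, and (b)~continuously projectable onto $(\tW,\delta_\Box)$. A cleaner alternative, and probably the route actually taken in~\cite{BCGPS}, is to bypass the infinite-dimensional Sanov step entirely and prove the LDP directly on $(\tW,\delta_\Box)$ by adapting the weak-regularity / counting-lemma argument of Chatterjee and Varadhan~\cite{ChatterjeeVaradhan11} to the $k$-block setting: cover $\tW$ by small $\delta_\Box$-balls, upper-bound the probability of each ball by a block-wise edge-counting estimate whose exponent is controlled by $F$, match this with a tilting lower bound, and take the lsc regularisation of $J_{\V\alpha,\V p}$ at the end to absorb the topological mismatch.
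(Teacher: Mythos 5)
You are attempting to prove a statement that the paper itself does not prove: Theorem~\ref{th:BCGPS} is imported verbatim from~\cite{BCGPS} (hence the omitted proof), and the paper only uses it as a black box to derive Theorems~\ref{th:GenLDP} and~\ref{th:ourLDP}, separately showing (Theorem~\ref{th:JLSC}) that the lsc regularisation in its statement is superfluous. So your proposal must be judged as a free-standing proof attempt, and as such it has a genuine gap.

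The gap is in Step~2. The weak-$*$ topology you place on labelled graphons is \emph{coarser}, not finer, than the cut-norm topology, and, more importantly, the map from $(\C W,\tau)$ to $(\tW,\delta_\Box)$ is not continuous, so the contraction principle simply does not apply. Concretely, let $U$ be the $\{0,1\}$-valued $2$-step graphon equal to $1$ on $[0,\frac12]^2\cup[\frac12,1]^2$ and $0$ elsewhere, let $\phi_n(x):=nx\bmod 1$, and set $p:=\int_{[0,1]^2}U\dd\lambda^{\oplus 2}=\frac12$. The pullbacks $U^{\phi_n}$ converge weak-$*$ to the constant graphon $p$, yet each $\phi_n$ is measure-preserving, so $\delta_\Box(U^{\phi_n},p)=\delta_\Box(U,p)>0$ for all $n$. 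This is not a technicality that a cleverer choice of $\tau$ can fix: homomorphism densities such as the triangle density are $\delta_\Box$-continuous but not weak-$*$-continuous, so an LDP in any weak-type topology carries strictly less information than the claimed LDP on $(\tW,\delta_\Box)$, and upgrading from the Sanov/weak level to the cut metric is exactly the hard core of Chatterjee--Varadhan and of~\cite{BCGPS}. Relatedly, your Step~1 is not an argument: per block, Sanov (or Cram\'er) controls only the empirical edge-density-type functionals, and the phrase ``refined so as to retain the spatial organisation'' is precisely the missing upper-bound machinery (weak regularity plus a counting lemma), not a proof of it. The parts of your write-up that are fine are minor: replacing a candidate rate function by its lsc regularisation indeed preserves the bounds~\eqref{eq:lowerGen}--\eqref{eq:upperGen}, and exponential tightness is a non-issue since $(\tW,\delta_\Box)$ is compact. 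Your closing fallback --- adapt the Chatterjee--Varadhan regularity/tilting argument blockwise directly on $(\tW,\delta_\Box)$ --- is indeed the route actually taken in~\cite{BCGPS}, but as written it is a one-sentence outline rather than a proof, so the proposal as a whole does not establish the theorem.
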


Our paper uses Theorem~\ref{th:BCGPS} to derive Theorem~\ref{th:ourLDP}, even though we initially proved Theorem~\ref{th:ourLDP} independently of the work by Borgs et al~\cite{BCGPS}, by first proving an LDP for what we call \emph{$k$-coloured graphons} (that are defined in Section~\ref{ColGraphons}). Since our original proof of Theorem~\ref{th:ourLDP} is quite long and shares many common steps with the proof from~\cite{BCGPS} (with both being built upon the method of
Chatterjee and Varadhan~\cite{ChatterjeeVaradhan11}), we decided to derive Theorem~\ref{th:ourLDP} from the results in~\cite{BCGPS} with a rather short proof. An extra benefit of this approach is that we also improve two aspects of the LDP of Borgs et al~\cite{BCGPS} as follows.

First, we prove that the function $J_{\V\alpha,\V p}:\tW\to [0,\infty]$ is  lower semi-continuous (so, in particular, there is no need to take
 the lower semi-continuous
 regularization in Theorem~\ref{th:BCGPS}):
 
 \begin{theorem}\label{th:JLSC}
 For every symmetric matrix $\B p\in [0,1]^{k\times k}$ and every non-zero real $k$-vector $\V \alpha\in [0,\infty)^k$, the function $J_{\V\alpha,\V p}:\tW\to[0,\infty]$ is lower semi-continuous with respect to the metric~$\delta_\Box$.
 \end{theorem}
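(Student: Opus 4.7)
The plan is to lift $J_{\V\alpha,\V p}$ to a compact space of $k$-coloured graphons and conclude via a variational representation of the relative entropy. Set $\beta_i:=\alpha_i/\|\V\alpha\|_1$, and let $\tWalpha$ denote the space (developed in Section~\ref{ColGraphons}) of weak-isomorphism classes of pairs $(V,f)$, where $V\in\C W$ and $f:[0,1]\to[k]$ is measurable with $\lambda(f^{-1}(i))=\beta_i$, endowed with a coloured cut metric for which $\tWalpha$ is compact and the forgetful map $\pi:\tWalpha\to\tW$ is continuous. Define
\begin{equation*}
\tilde J_{\V p}(\T{(V,f)}) := \tfrac12 \sum_{i,j\in[k]} \int_{f^{-1}(i)\times f^{-1}(j)} h_{p_{i,j}}(V(x,y))\,\dd x\,\dd y.
\end{equation*}

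Since $([0,1],\lambda)$ is atomless, any measurable partition of $[0,1]$ with part-measures $(\beta_i)$ is related to $(\cI{\V\alpha}{i})_{i\in[k]}$ by a measure-preserving bijection. Applying such a rearrangement to any $V\in\T U$ and tracking the integral via change-of-variables gives
\begin{equation}\label{eq:liftJ}
J_{\V\alpha,\V p}(\T U) \;=\; \inf\bigl\{\tilde J_{\V p}(\T{W^c}):\ \T{W^c}\in\tWalpha,\ \pi(\T{W^c})=\T U\bigr\}.
\end{equation}

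The main step is the lsc of $\tilde J_{\V p}$ on $\tWalpha$. Here I would use the Legendre duality $h_p(t)=\sup_{s\in\I R}(st-\psi_p(s))$ with $\psi_p(s):=\log(1-p+pe^s)$ (extended appropriately when $p\in\{0,1\}$) together with a measurable selection and monotone convergence to write
\[
\tilde J_{\V p}(\T{(V,f)}) = \tfrac12\,\sup_{g}\sum_{i,j\in[k]}\int_{f^{-1}(i)\times f^{-1}(j)}\bigl[g(x,y)V(x,y)-\psi_{p_{i,j}}(g(x,y))\bigr]\dd x\,\dd y,
\]
where $g$ ranges over the countable family of step functions constant on the cells of some dyadic grid of $[0,1]^2$. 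For each such $g$, the summand is a finite linear combination of quantities of the form $c_R\int_{R\cap(f^{-1}(i)\times f^{-1}(j))}V(x,y)\dd x\,\dd y$ plus constants depending only on $f$, and each such quantity is continuous on $\tWalpha$ (this is exactly what the coloured cut metric is designed to give). Hence $\tilde J_{\V p}$ is a supremum of continuous functions, so lsc.

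With these ingredients the lsc of $J_{\V\alpha,\V p}$ follows in a standard way: given $\T U_n\to\T U$ in $\tW$, use~\eqref{eq:liftJ} to choose $\T{W^c_n}\in\tWalpha$ with $\pi(\T{W^c_n})=\T U_n$ and $\tilde J_{\V p}(\T{W^c_n})\le J_{\V\alpha,\V p}(\T U_n)+1/n$; compactness extracts a convergent subsequence $\T{W^c_{n_m}}\to \T{W^c}$; continuity of $\pi$ gives $\pi(\T{W^c})=\T U$; and lsc of $\tilde J_{\V p}$ yields $J_{\V\alpha,\V p}(\T U)\le \tilde J_{\V p}(\T{W^c})\le \liminf_n J_{\V\alpha,\V p}(\T U_n)$. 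The main obstacle is the lsc of $\tilde J_{\V p}$: the variational argument must be matched against the precise coloured cut metric on $\tWalpha$ to ensure that the building blocks $\T{(V,f)}\mapsto\int_{R\cap(f^{-1}(i)\times f^{-1}(j))}V$ are continuous.
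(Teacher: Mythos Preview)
Your overall architecture is exactly the paper's: lift to a compact space of $k$-coloured graphons, prove lsc of the lifted functional $\tilde J_{\V p}$ there, and push down via the continuous forgetful map using compactness. Your equation~\eqref{eq:liftJ} and the final subsequence argument match the paper's Theorem~\ref{th:lsc} (which phrases the last step via level sets, but that is equivalent).

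The gap is precisely where you flag it: the building blocks $\T{(V,f)}\mapsto \int_{R\cap (f^{-1}(i)\times f^{-1}(j))}V$ for a \emph{fixed} dyadic rectangle $R$ are not well-defined on $\tWalpha$. The equivalence relation on $\tWalpha$ identifies $(V,f)$ with $(V^\phi,f\circ\phi)$ for measure-preserving $\phi$, and under such a relabelling the integral becomes $\int_{\phi(R)\cap (f^{-1}(i)\times f^{-1}(j))}V$, which is generally different (take $V\equiv\tfrac12$, two equal colour classes, and $\phi(x)=1-x$). So for fixed $g$ your functional $F_g$ lives on $(\Walpha,d_\Box^{(k)})$, not on the quotient, and ``$F_g$ is continuous on $\tWalpha$'' is false as stated. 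One can salvage your route by first proving lsc of $\tilde J_{\V p}$ on $(\Walpha,d_\Box^{(k)})$ via the variational formula, and then passing to the quotient using that $\delta_\Box^{(k)}$ can be computed with the identity on one side; but this extra step is not in your write-up.

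The paper sidesteps this entirely with a different decomposition. It introduces, for each pair $(i,j)$, the map $\Gamma_{i,j}:\Wk\to\C W$ that keeps $V$ on $(A_i\times A_j)\cup(A_j\times A_i)$ and sets it to the constant $p_{i,j}$ elsewhere. These maps \emph{are} well-defined and $1$-Lipschitz from $(\tWk,\delta_\Box^{(k)})$ to $(\tW,\delta_\Box)$ (Lemmas~\ref{lm:Lipschitz} and~\ref{lm:Cont}). Since $h_{p}(p)=0$, one has the clean identity
\[
\tilde J_{\V p}(V,\C A)\;=\;\sum_{1\le i\le j\le k} I_{p_{i,j}}\bigl(\Gamma_{i,j}(V,\C A)\bigr),
\]
and each summand is lsc on $\tWk$ as the composition of the continuous $\Gamma_{i,j}$ with the Chatterjee--Varadhan rate $I_p$, already known to be lsc on $(\tW,\delta_\Box)$ by Theorem~\ref{th:CV}. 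This buys two things over your approach: no need to re-run a Legendre-duality argument in the coloured setting, and every object in sight is manifestly invariant under relabelling, so the passage to the quotient is automatic.
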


Note that if no entry of the fixed matrix $\V p$ is 0 or 1 then the lower semi-continuity of $J_{\V\alpha,\V p}$ follows from a (more general) result of Markering~\cite[Theorem~2.3]{Markering23}.

Second, we extend Theorem~\ref{th:BCGPS} by allowing the fraction of vertices assigned to a part to depend on $n$ as long as it converges to any finite (possibly irrational) limit.

\begin{theorem}\label{th:GenLDP}
	Fix 
	any symmetric $k\times k$ matrix $\V p\in [0,1]^{k\times k}$ and a non-zero real $k$-vector $\V\alpha=(\alpha_1,\dots,\alpha_k)\in [0,\infty)^k\NoZero$.  Let
	$$
	\V a_n=(a_{n,1},\dots,a_{n,k})\in \NZ^k\NoZero,\quad \mbox{for $n\in\I N$},
	$$ 
	be arbitrary non-zero integer $k$-vectors  such that $\lim_{n\to\infty} a_{n,i}/n=\alpha_i$ for each $i\in [k]$. 
	Then the sequence of measures $(\tP_{\V a_n,\V p})_{n\in\I N}$ on $(\tW,\delta_\Box)$ satisfies an LDP with speed $\|\V a_n\|_1^2$ and rate function~$J_{\V\alpha,\V p}$.
\end{theorem}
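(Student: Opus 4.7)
The plan is to derive Theorem~\ref{th:GenLDP} from Theorem~\ref{th:BCGPS} (the integer-vector case of Borgs et al.) and Theorem~\ref{th:JLSC} via two reductions: (i) pass from the real direction $\V\alpha$ to a rational approximation $\V c\in\NZ^k\NoZero$, and (ii) pass from the given sequence $\V a_n$ to a sequence of integer multiples $m_n\V c$, to which Theorem~\ref{th:BCGPS} applies directly. Since $J_{\V c,\V p}=J_{\V\alpha,\V p}$ whenever $\V c$ and $\V\alpha$ are proportional and Theorem~\ref{th:JLSC} identifies $(J_{\V c,\V p})_{\mathrm{lsc}}$ with $J_{\V c,\V p}$, the conclusion of Theorem~\ref{th:BCGPS} applied to the approximating sequence already yields the expected rate function, up to an error that must be driven to zero.

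Concretely, fix $\varepsilon>0$ and choose $\V c\in\NZ^k\NoZero$ whose normalisation $\V c/\|\V c\|_1$ lies within $\varepsilon$ of $\V\alpha/\|\V\alpha\|_1$ in $\ell_1$-norm, and set $m_n:=\lfloor\|\V a_n\|_1/\|\V c\|_1\rfloor$, so that $\|m_n\V c-\V a_n\|_1\le C\varepsilon\,\|\V a_n\|_1$ for all large $n$. I would couple $\I G(\V a_n,\V p)$ and $\I G(m_n\V c,\V p)$ by sampling once from $\I G(\V a_n\vee m_n\V c,\V p)$ and taking induced subgraphs on maximally overlapping vertex subsets of the correct block sizes. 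Since both subgraphs share a common induced subgraph on all but $O(\varepsilon\,\|\V a_n\|_1)$ vertices, the standard estimate that removing $r$ vertices from an $n$-vertex graph changes the corresponding graphon by $O(r/n)$ in $\delta_\Box$ gives $\delta_\Box(\f{G_1},\f{G_2})\le C'\varepsilon$ deterministically. Hence $\tP_{\V a_n,\V p}(F)\le\tP_{m_n\V c,\V p}(F^{(C'\varepsilon)})$ for every closed $F\subseteq\tW$ (with $F^{(\delta)}$ denoting the closed $\delta$-thickening), and a dual inclusion holds for open sets with shrunk interiors. Applying Theorem~\ref{th:BCGPS} and Theorem~\ref{th:JLSC} to $(\tP_{m_n\V c,\V p})_n$, together with the ratio $(m_n\|\V c\|_1)^2/\|\V a_n\|_1^2\to 1$, yields
\[
\limsup_n\frac{1}{\|\V a_n\|_1^2}\log\tP_{\V a_n,\V p}(F)\le-\inf_{F^{(C'\varepsilon)}}J_{\V c,\V p},
\]
and an analogous lower bound for open sets.

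It remains to pass to the limit $\varepsilon\to 0$ (so that $\V c/\|\V c\|_1\to\V\alpha/\|\V\alpha\|_1$ and $F^{(C'\varepsilon)}$ shrinks back to $F$). This rests on an approximation lemma: if $\V\beta\to\V\alpha$ in $[0,\infty)^k\NoZero$, then for every $U\in\tW$ there exists $U'\in\tW$ with $\delta_\Box(U,U')\to 0$ and $J_{\V\beta,\V p}(U')=J_{\V\alpha,\V p}(U)+o(1)$, and symmetrically. I would prove this via the piecewise-linear bijection $\phi_{\V\alpha,\V\beta}\colon[0,1]\to[0,1]$ mapping each $\cI{\V\alpha}{i}$ linearly onto $\cI{\V\beta}{i}$: applied to a near-optimal $V\in\T U$, it produces $U':=\widetilde{V\circ(\phi,\phi)}$ that is $\delta_\Box$-close to $U$ (since $\phi\to\mathrm{id}$ uniformly), while the change of variables multiplies each block integral in~\eqref{eq:JAlphaP} by a Jacobian factor $\alpha_i\alpha_j/\beta_i\beta_j=1+o(1)$. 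Combining this lemma with Theorem~\ref{th:JLSC} and the $\delta_\Box$-compactness of $\tW$, I extract a convergent sub-subsequence of near-minimisers in $F^{(C'\varepsilon)}$ to close the upper bound; the lower bound is similar. The main obstacle I expect is the approximation lemma itself, especially when $\V\alpha$ has vanishing coordinates and the linear bijection degenerates on those intervals; the natural fix is to reduce to the case $\V\alpha\in(0,\infty)^k$ by noting that the at most $o(n)$ vertices in ``absent'' blocks of $\V a_n$ contribute only $o(n^2)$ edges and are thus exponentially negligible at speed~$\|\V a_n\|_1^2$.
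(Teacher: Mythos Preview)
Your proposal is correct and follows essentially the same route as the paper: the paper's Lemma~\ref{lm:DifferentRatios} is exactly your supergraph/common-induced-subgraph coupling, its Lemma~\ref{lm:ContOfJ} is exactly your piecewise-linear stretching approximation lemma (with the same Jacobian bookkeeping), and the paper likewise applies Theorem~\ref{th:BCGPS} along an integer-vector approximant and handles vanishing $\alpha_i$ by zeroing those blocks first. The only cosmetic difference is that the paper takes its approximating integer vector directly from the sequence (setting $\V b:=\V a_m$ with the $\alpha_i=0$ coordinates zeroed) rather than choosing an arbitrary rational $\V c$, which automatically aligns zero-patterns and spares your separate reduction step.
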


Note that an earlier (unpublished) manuscript by the authors \cite{GrebikPikhurko:LDPStepW} contained Theorems~\ref{th:JLSC} and~\ref{th:GenLDP}, with the same proofs as are presented here. Thus it is fully superseded by this paper.

\subsection{Organisation of the paper}

This paper is organised as follows.

In Section~\ref{prelim} we give further definitions (repeating some definitions from the Introduction) and provide some standard or easy results that we will need later. 

In Section~\ref{sec:KW}, we present basic results concerning the function $K_W$ that was defined in~\eqref{eq:KW}. In Section~\ref{sec:H_{n,W}}, we resolve all cases of the LDP problem for the measures $\tH_{n,W}$ coming from the weighted graph sampling. In Section~\ref{sec:P vs H}, we 
show that the sequences $(\tH_{n,W})_{n\in\I N}$ and $(\tR_{n,W})_{n\in\I N}$ are exponentially equivalent for any speed~$s(n)=o(n^2)$. This automatically gives LDPs for $(\tR_{n,W})_{n\in\I N}$ in Theorem~\ref{thm:GeneralSpeeds} for speeds~$o(n^2)$. The remaining unproved case of Theorem~\ref{thm:GeneralSpeeds}, namely when $s(n)/n^2\to\infty$ is done in Section~\ref{se:RnW}.

The rest of the paper deals with speeds in $\Theta(n^2)$.
Section~\ref{ColGraphons} introduces $k$-coloured graphons.
They are used in Section~\ref{Rate} to prove that the functions $J_{\V a,\V p}$ and $R_{\V p}$ are lower semi-continuous (with this result stated as Theorem~\ref{th:lsc}). The large deviation principles stated in Theorems~\ref{th:GenLDP} and~\ref{th:ourLDP} are proved in Section~\ref{GenLDP} and~\ref{ourLDP} respectively.


\section{Preliminaries}\label{prelim}

Recall that $\I N$ (resp.\ $\NZ$) denotes the set of positive (resp.\ non-negative) integers.
For $n\in \I N$, we write $[n]$ for the set $\{1,\dots n\}$. 
Also, recall that $\L{x}:=x\log x$ for $x\in [0,\infty]$ was defined by~\eqref{eq:L}. All logarithms in this paper are base~$\me$.

Let $X$ be a set.
Its \emph{indicator function} $\I 1_X$ assumes value $1$ for every $x\in X$ and 0 otherwise. Recall the related function $\chi_X$ which is $0$ on $X$ and $\infty$ elsewhere. The $k$-fold product of $X$ is denoted by~$X^k$. Also, ${X\choose k}:=\{Y\subseteq X\mid |Y|=k\}$ denotes the set of all $k$-subsets of~$X$.

A measurable space $(\Omega,\C A)$ is called \emph{standard}
if there is a Polish topology on $\Omega$ whose Borel $\sigma$-algebra is equal to~$\C A$. Given a measure
$\mu$ on $(\Omega,\C A)$, we call a subset of $\Omega$ \emph{$\mu$-measurable} (or just \emph{measurable} if $\mu$ is understood) if it belongs to the \emph{completion} of $\C A$
by $\mu$, that is, the $\sigma$-algebra generated by $\C A$ and $\mu$-null sets.
We will usually omit $\sigma$-algebras from our notation.

Unless specified otherwise, the interval $[0,1]$ of reals is always equipped with the Lebesgue measure, denoted by~$\lambda$. By $\mu^{\oplus k}$ we denote the completion of the $k$-th power of a measure $\mu$. For example, $\lambda^{\oplus k}$ is the Lebesgue measure on~$[0,1]^k$.

Denote as $\Ak$ the set of all ordered partitions of $[0,1]$ into $k$ measurable sets.
For a non-zero vector $\V\alpha=(\alpha_1,\dots,\alpha_k)\in [0,\infty)^k$, we let $\Aalpha\subseteq\Ak$ to be the set of all ordered partitions of $[0,1]$ into $k$ measurable sets such that the $i$-th set has Lebesgue measure exactly~$\alpha_i/\|\V\alpha\|_1$. Also, $(\cI{\V\alpha}{1},\dots,\cI{\V\alpha}{k})\in\Aalpha$ denotes the partition of $[0,1]$ into consecutive intervals whose lengths are given by~$\V\alpha/\|\V\alpha\|_1$  (where each dividing point is assigned  to e.g.\ its right interval for definiteness). If $\V a\in(0,\infty)^n$ consists of $n$ equal entries, then we abbreviate $\cI{\V\alpha}{j}$ to $I^n_j$. In other words, $I^n_j:=[\frac{j-1}{n},\frac{j}{n})$ for $j\in [n-1]$ and $I^n_n:=\left[\frac{n-1}n,1\right]$.

\arxiv{
We will need the following result. 
\hide{ (whose proof can be found in e.g.~\cite[Theorem 3.4.23]{Srivastava98cbs}).

\begin{theorem}[Isomorphism Theorem for Measure Spaces]\label{th:ISMS}
	For every two atomless standard probability spaces, 
	there is a measure-preserving Borel isomorphism between them.
\end{theorem}	
}%

\begin{theorem}\label{th:ISMS}
	For every two atomless standard measure spaces 
$(\Omega,\mu)$ and $(\Omega',\mu')$, 
and Borel subsets $A\subseteq \Omega$ and $A'\subseteq \Omega'$ with $0<\mu(A)=\mu'(A')<\infty$,
	there is a measure-preserving Borel isomorphism between $A$ and~$A'$.
\end{theorem}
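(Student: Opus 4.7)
The plan is to reduce Theorem~\ref{th:ISMS} to the assertion that every atomless standard Borel probability space is measure-theoretically isomorphic to $([0,1],\lambda)$; the required isomorphism $A\to A'$ is then the composition of two such canonical isomorphisms. The preliminary reduction to probability measures is free because $0<\mu(A)=\mu'(A')<\infty$ lets us rescale $\mu|_A$ and $\mu'|_{A'}$ by the common factor $1/\mu(A)$, preserving atomlessness. Moreover, $A$ and $A'$, being Borel subsets of standard Borel spaces, are themselves standard, so we are left with two atomless standard Borel probability spaces $(A,\mu_A)$ and $(A',\mu_{A'})$.

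To build the canonical isomorphism with $([0,1],\lambda)$, first invoke the (purely set-theoretic) Borel isomorphism theorem: since $A$ is uncountable (atomlessness plus positive finite measure forces this), there is a Borel isomorphism $\phi:A\to[0,1]$. Let $\nu:=\phi_*\mu_A$ be the push-forward, an atomless Borel probability measure on $[0,1]$. Next, use the cumulative distribution function $F_\nu(x):=\nu([0,x])$. Atomlessness of $\nu$ makes $F_\nu$ continuous and nondecreasing with $F_\nu(0)=0$ and $F_\nu(1)=1$; a direct check on intervals gives $(F_\nu)_*\nu=\lambda$, and $F_\nu$ becomes a Borel isomorphism after removing the (at most countable) union $N\subseteq[0,1]$ of maximal open intervals on which $F_\nu$ is constant (which is $\nu$-null) and the corresponding countable set $F_\nu(N)$ (which is $\lambda$-null). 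Consequently, $F_\nu\circ\phi$ is a measure-preserving Borel isomorphism between co-null Borel subsets $A_0\subseteq A$ and $B_0\subseteq[0,1]$.

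It remains to upgrade this mod-null isomorphism to a genuine Borel bijection $A\to[0,1]$. The discarded sets $A\setminus A_0$ and $[0,1]\setminus B_0$ are Borel and $\mu_A$- and $\lambda$-null respectively. After transferring countably many points between $A_0$ (resp.\ $B_0$) and its complement in order to match cardinalities, which is permissible because individual points carry zero measure, both discarded pieces are standard Borel spaces of the same cardinality, and hence admit a Borel bijection between them (trivially in the countable case, by the Borel isomorphism theorem in the uncountable case). Gluing this with $F_\nu\circ\phi$ yields the desired measure-preserving Borel isomorphism $A\to[0,1]$, and composing two such canonical isomorphisms completes the proof.

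The main technical obstacle is precisely this last step: producing a Borel \emph{bijection}, rather than merely a measure-preserving Borel map that is an isomorphism modulo null sets. Once the bookkeeping of the null sets is handled, everything else is routine, and the overall construction is classical; a complete treatment can be found in standard references on descriptive set theory.
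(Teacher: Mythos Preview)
Your proposal is correct and follows essentially the same route as the paper: rescale to probability measures, observe that Borel subsets of standard Borel spaces are themselves standard, and then invoke the classical Isomorphism Theorem for Measure Spaces to identify each side with $([0,1],\lambda)$. The only difference is one of depth: the paper simply cites this isomorphism theorem from a standard reference (Srivastava), whereas you sketch its proof via the Borel isomorphism theorem and the CDF construction, including the bookkeeping needed to upgrade a mod-null isomorphism to a genuine Borel bijection.
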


\begin{proof} The case when $A=\Omega$ and $A'=\Omega'$ amounts to the Isomorphism Theorem for Measure Spaces, whose proof can be found in e.g.~\cite[Theorem 3.4.23]{Srivastava98cbs}. The general case follows by restricting everything to $A$ and $A'$, and noting that the obtained measure spaces are still standard by e.g.\ \cite[Theorem 3.2.4]{Srivastava98cbs}.\end{proof}	
}{}

We write $\fG_n$ for the set of  graphs on the vertex set $[n]$. Thus, for example, $|\fG_n|=2^{{n\choose 2}}$. For a graph $G=(V,E)$, its \emph{complement} is $\O G:=\left(V,{V\choose 2}\setminus E\right)$.

Also, let $\fH_n$ denote the set of all edge-weighted complete graphs on the vertex set $[n]$ with weights from $[0,1]$.
If $H\in \mathcal{H}_n$, then we write $w_H:{[n]\choose 2}\to [0,1]$ for the edge-weight function of~$H$, often abbreviating $w_H(\{i,j\})$ to $w_H(i,j)$.

\subsection{Graphons}\label{graphons}

A \emph{graphon} $U$ is a function  $U:[0,1]^2\to[0,1]$ which is \emph{symmetric} (that is,
$U(x,y)=U(y,x)$ for all $x,y\in [0,1]$) and \emph{measurable} (that is, for every $a\in \I R$, the level set $\{W\le a\}$ is a Lebesgue measurable subset of $[0,1]^2$). 
Recall that we denote the set of all graphons by~$\mathcal{W}$.  We call a graphon $W$ \emph{constant} if there is $c\in [0,1]$ such that the measure of $[0,1]^2\setminus W^{-1}(c)$ is~0.

We define the pseudo-metric $d_\Box:\C W^2\to [0,1]$ by 
\beq{eq:CutNorm}
d_\Box(U,V):=\sup_{A,B\subseteq [0,1]} \left|\int_{A\times B}\left(U-V\right) \dd\lambda^{\oplus 2}\right|,\quad U,V\in \mathcal{W},
 \eeq
where the supremum is taken over all pairs of measurable subsets of $[0,1]$.
For a function $\phi:[0,1]\to [0,1]$ and a graphon $U$, 
the \emph{pull-back} $U^\phi$ of $U$ along $\phi$ is defined 
by 
$$
 U^{\phi}(x,y):=U(\phi(x),\phi(y)),\quad x,y\in [0,1].
 $$
The \emph{cut-distance} $\delta_\Box:\C W^2\to [0,1]$ can be defined as
\beq{eq:CutDistance}
\delta_{\Box}(U,V):=\inf_{\phi,\psi} d_\Box (U^\phi,V^\psi),
\eeq
where the infimum is taken over all measure-preserving maps $\phi,\psi:[0,1]\to [0,1]$.
We refer the reader to \Lo{Section~8.2} for more details and, in particular, to \Lo{Theorem 8.13} for some alternative definitions that give the same distance. It can be easily verified that $\delta_\Box$ is a pseudo-metric on~$\C W$.
Recall that we denote the metric quotient by $\tW$ and  the equivalence class of $U\in \mathcal{W}$ by~$\widetilde{U}$. For brevity, we may write $W$ instead of $\T W$ when the stated property is independent of the choice of a representative in~$\T W$.

Define the \emph{homomorphism density} of a finite graph  $F$ on $[n]$ in a graphon $W$ by
$$
t(F,W):=\int_{[0,1]^{n}} \prod_{\{i,j\}\in E(F)} W(x\ind{i},x\ind{j}) \dd\lambda^{\oplus n}.
$$
This can be easily related to the induced homomorphism density defined in~\eqref{eq:TInd}: namely, $t(F,W)$ is the sum of $\tind(F',W)$ over all graphs $F'$ on $[n]$ with $E(F')\supseteq E(F)$.
Also, the random $n$-vertex sample $\I G(n,W)$ (and the corresponding measure $\iR_{n,W}$ on $\W$) can be equivalently defined as the random graph on $[n]$ taking each possible value $F$ with probability $\tind(F,W)$.

We will need the following auxiliary results. The first one, due to Lov\'asz and Szegedy~\cite{LovaszSzegedy07gafa}*{Theorem~5.1} (see also \Lo{Theorem 9.23}), states that the metric space $(\tW,\delta_\Box)$ is compact. 
\arxiv{This result is also the special case $k=1$ of our more general Theorem~\ref{th:compact colorod graphon} whose proof closely follows that of~\cite{LovaszSzegedy07gafa} and is sketched here.}{}


The next result is a consequence of 
the Counting Lemma and the Inverse Counting Lemma in  Borgs et al~\cite[Theorem 2.7(a)--(b)]{BCLSV08} (see also~\Lo{Lemma~10.32 and Theorem~11.3}). It gives that the $\delta_\Box$-topology on $\tW$ is generated by the homomorphism density maps $W\mapsto t(F,W)$ over all graphs~$F$. 

\begin{theorem}\label{th:ConvOnW} A sequence of graphons $(W_n)_{n\in\I N}$ converges to $W\in\W$ in the cut-distance if and only if $\lim_{n\to\infty} t(F,W_n) =t(F,W)$ for every graph~$F$.\qqed\end{theorem}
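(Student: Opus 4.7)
The plan is to prove the two directions separately, using the Counting Lemma for the forward implication and a combination of compactness with the Inverse Counting Lemma for the reverse implication.

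For the forward direction, assume $\delta_\Box(W_n, W) \to 0$. I would invoke the Counting Lemma (one of the two ingredients cited from~\cite{BCLSV08}), which states that for every graph $F$ and any two graphons $U, V \in \W$,
\[
|t(F, U) - t(F, V)| \le |E(F)|\, \delta_\Box(U, V).
\]
Since each $t(F, \_)$ is well-defined on $\tW$ (by \Lo{Lemma~10.23}), applying this with $U = W_n$ and $V = W$ immediately gives $t(F, W_n) \to t(F, W)$ for every graph $F$.

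For the reverse direction, suppose $t(F, W_n) \to t(F, W)$ for every graph $F$. I would argue by compactness. By the theorem of Lov\'asz and Szegedy cited just above (compactness of $(\tW, \delta_\Box)$), every subsequence of $(\T W_n)_{n \in \I N}$ has a further subsequence that $\delta_\Box$-converges to some limit $\T U \in \tW$. By the already-proved forward direction, $t(F, W_n) \to t(F, U)$ along this subsubsequence for every graph $F$. But by hypothesis the full sequence $t(F, W_n)$ converges to $t(F, W)$, so $t(F, U) = t(F, W)$ for every graph $F$. The Inverse Counting Lemma (the second ingredient cited from~\cite{BCLSV08}) then implies $\delta_\Box(U, W) = 0$, i.e.\ $\T U = \T W$ in $\tW$. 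Since every subsequence of $(\T W_n)$ has a further subsequence $\delta_\Box$-converging to the same point $\T W$, the whole sequence converges: $\delta_\Box(W_n, W) \to 0$.

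The main (and only real) obstacle is the reverse direction, which inherently requires both compactness of $(\tW, \delta_\Box)$ and the Inverse Counting Lemma; without the latter, distinct graphons could a priori share all homomorphism densities, and without the former, one could not pass to convergent subsequences. Since both tools are available (and cited), the proof reduces to the standard subsequence argument sketched above.
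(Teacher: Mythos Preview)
Your proposal is correct and matches the paper's approach: the paper does not give a proof but states the theorem as a consequence of exactly the two ingredients you use (the Counting Lemma and the Inverse Counting Lemma from~\cite[Theorem 2.7(a)--(b)]{BCLSV08}), with the compactness of $(\tW,\delta_\Box)$ already recorded just above. Your subsequence argument is the standard way to assemble these pieces into the stated equivalence.
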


We will also need the following result. \arxiv{}{Since we could not find this result the literature, we present its (routine but technical) proof in the arxiv version of this paper~\cite{GrebikPikhurko23arxiv}.}

\begin{lemma}\label{lm:AlmostMP}
	Let $U$ be a graphon and $\phi:[0,1]\to [0,1]$ be a measurable function such that the \emph{push-forward measure} $\phi_*\lambda$ (defined by $(\phi_*\lambda)(X):=\lambda(\phi^{-1}(X))$ for measurable $X\subseteq [0,1]$) satisfies $\phi_*\lambda\ll \lambda$, that is, $\phi_*\lambda$
 is absolutely continuous with respect to the Lebesgue measure~$\lambda$. Then  $U^\phi$ is a graphon. Moreover, 
	if the Radon-Nikodym derivative $D:=\frac{\dd (\phi_*\lambda)}{\dd \lambda}$ satisfies $D(x)\le 1+\e$ for $\lambda$-a.e.\ 
	$x\in[0,1]$ then
	$
	\delta_\Box(U,U^\phi)\le 2\e.
	$
\end{lemma}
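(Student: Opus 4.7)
The first assertion---that $U^\phi$ is a graphon---follows from the hypothesis $\phi_*\lambda\ll\lambda$: it implies $(\phi\times\phi)_*\lambda^{\oplus 2}=(\phi_*\lambda)\otimes(\phi_*\lambda)\ll\lambda^{\oplus 2}$, so $\phi\times\phi$ pulls $\lambda^{\oplus 2}$-null sets back to $\lambda^{\oplus 2}$-null sets. Writing a Lebesgue measurable $U$ as a Borel function modified on a $\lambda^{\oplus 2}$-null set, one concludes that $U^\phi=U\circ(\phi\times\phi)$ is Lebesgue measurable; symmetry is inherited from~$U$.

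For the cut-distance bound, let $\mu:=\phi_*\lambda$ and $D:=\dd\mu/\dd\lambda$. Since $\int D\,\dd\lambda=1$ and $D\le 1+\e$ a.e., one has $\int(D-1)_+\,\dd\lambda=\int(1-D)_+\,\dd\lambda\le\e$. The plan is to produce measure-preserving maps $\alpha,\beta:[0,1]\to[0,1]$ for which $U^\alpha(x,z)=(U^\phi)^\beta(x,z)=U^{\phi\circ\beta}(x,z)$ on a subset of $[0,1]^2$ of $\lambda^{\oplus 2}$-measure at least $1-2\e$; since both sides lie in $[0,1]$, this forces $d_\Box(U^\alpha,U^{\phi\circ\beta})\le 2\e$, whence $\delta_\Box(U,U^\phi)\le 2\e$ by~\eqref{eq:CutDistance}.

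The construction uses an auxiliary coordinate on $[0,1]^2$. Put
$$
\tilde S:=\bigl\{(x,u)\in[0,1]^2:\ u\le\min(1,\,1/D(\phi(x)))\bigr\}
$$
and set $\tilde\phi(x,u):=\phi(x)$ on $\tilde S$. A direct change-of-variables computation yields $\lambda^{\oplus 2}(\tilde S)=\int\min(D,1)\,\dd\lambda\ge 1-\e$ and $(\tilde\phi|_{\tilde S})_*(\lambda^{\oplus 2}|_{\tilde S})=\min(D,1)\cdot\lambda$. The resulting deficit $\lambda-\min(D,1)\lambda=(1-D)_+\lambda$ has total mass equal to $\lambda^{\oplus 2}(\tilde S^c)$, and both $(\tilde S^c,\lambda^{\oplus 2}|_{\tilde S^c})$ and $([0,1],(1-D)_+\lambda)$ are finite atomless standard measure spaces, so Theorem~\ref{th:ISMS} extends $\tilde\phi$ to $\tilde S^c$ making $\tilde\phi:[0,1]^2\to[0,1]$ measure-preserving overall. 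Now fix a measure-preserving Borel isomorphism $\theta:[0,1]\to[0,1]^2$ (again Theorem~\ref{th:ISMS}) and set $\alpha:=\tilde\phi\circ\theta$ and $\beta:=\pi_1\circ\theta$, both measure-preserving. Whenever $\theta(x),\theta(z)\in\tilde S$, we have $\alpha(x)=\phi(\pi_1(\theta(x)))=\phi(\beta(x))$, hence $U^\alpha(x,z)=(U^\phi)^\beta(x,z)$; the exceptional $(x,z)$-set has $\lambda^{\oplus 2}$-measure at most $1-(1-\e)^2\le 2\e$.

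The main subtlety is producing the measure-preserving $\tilde\phi$ that agrees with $\phi$ on a large set. A direct attempt to build such a map on $[0,1]$ would ask, given the disintegration $\lambda=\int\nu_y\,\dd\mu(y)$ along $\phi$, for a Borel $S\subseteq[0,1]$ with $\nu_y(S)=1/D(y)$ for $\mu$-a.e.\ $y$ having $D(y)\ge 1$; this can fail because the conditional measures $\nu_y$ on the fibres $\phi^{-1}(y)$ may have atoms (as one sees already for a simple ``folding'' map onto $[0,1/2]$). Passing to the product space $[0,1]^2$ and defining $\tilde S$ via the independent uniform coordinate $u$ supplies exactly the auxiliary randomness needed to resolve this selection problem, and is the only non-routine ingredient of the argument.
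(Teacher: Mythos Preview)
Your proof is correct. The first part matches the paper's argument essentially verbatim. For the second part, both proofs rest on the same underlying idea: lift to a two-dimensional space so that an auxiliary uniform coordinate resolves the fibre-selection problem you identify in your last paragraph. The executions differ, however. The paper works in the language of \emph{generalised graphons}: it realises $U$ as the pull-back of itself to $([0,1]^2,\lambda^{\oplus 2})$ along the first projection, realises $U^\phi$ as the pull-back of $(U,[0,1],\phi_*\lambda)$ to the subgraph region $\Omega:=\{(y,t):0\le t\le D(y)\}$, and then compares these two generalised graphons by a measure-preserving bijection that is the identity on $[0,1]^2\cap\Omega$ (a set of measure at least $1-\e$). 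Your construction stays entirely within the ordinary graphon framework on $[0,1]$: you build the measure-preserving $\tilde\phi:[0,1]^2\to[0,1]$ directly, with the auxiliary region $\tilde S$ living on the \emph{source} side (fibres of $\phi$) rather than the target side, and then conjugate by an isomorphism $\theta:[0,1]\to[0,1]^2$ to obtain the witnessing pair $(\alpha,\beta)$ in~\eqref{eq:CutDistance}. Your route is arguably more elementary, since it avoids appealing to the generalised-graphon machinery (and the fact~\eqref{eq:51} that pull-backs along measure-preserving maps preserve $\delta_\Box$) and works straight from the definition of $\delta_\Box$; the paper's route is slightly more conceptual in that it cleanly separates the two weak-isomorphism steps from the single $d_\Box$-estimate. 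Both yield the same bound $2\e$ from the same $(1-\e)^2\ge 1-2\e$ estimate.
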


\arxiv{
\begin{proof}  
		The function $U^\phi:[0,1]^2\to [0,1]$ is clearly symmetric so we have to show that it is measurable.
	Since the pre-image under $\phi$ of any $\lambda$-null set is again $\lambda$-null by our assumption $\phi_*\lambda\ll \lambda$, there is a Borel map  $\psi:[0,1]\to[0,1]$  such that the set 
	$$X:=\{x\in [0,1]:\psi(x)\not=\phi(x)\}$$ 
	is $\lambda$-null. (For a proof, see e.g.~\cite[Proposition~2.2.5]{Cohn13mt}.)
Take any $\rho\in\I R$. The set 
$$
 A:=\{(x,y)\in [0,1]^2\mid U(x,y)\le \rho\}
 $$ 
 is measurable so by e.g.~\cite[Proposition~1.5.2]{Cohn13mt} there are $B,N\subseteq [0,1]^2$ such that $B$ is Borel, $N$ is $\lambda^{\oplus 2}$-null and $A\bigtriangleup B\subseteq N$, where $A\bigtriangleup B:=(A\setminus B)\cup (B\setminus A)$ denotes the \emph{symmetric difference} of the sets $A$ and $B$. The pre-image of $N$ under the Borel map $\psi^{\oplus2}(x,y):=(\psi(x),\psi(y))$ is also $\lambda^{\oplus 2}$-null for otherwise
 this would contradict the absolute continuity $(\phi_*\lambda)^{\oplus 2}\ll \lambda^{\oplus 2}$ (which follows from
$\phi_*\lambda\ll \lambda$ by the Fubini-Tonelli Theorem for Complete Measures).
 Thus the level set 
 $\{U^\phi\le \rho\}$
 is Lebesgue measurable since its symmetric difference with the Borel set $(\psi^{\oplus2})^{-1}(B)$ is a subset of the null set $(\psi^{\oplus2})^{-1}(N)\cup (X\times [0,1])\cup ([0,1]\times X)$. As $\rho\in\I R$ was arbitrary, $U^\phi$ is a measurable function and thus a graphon.

For the second part, it will be convenient to use the following generalisation of a graphon.
Namely, by
a \emph{generalised graphon} we mean a triple $(V,\Omega,\mu)$ where $(\Omega,\mu)$ is an atomless standard probability space  and $V:(\Omega^2,\mu^{\oplus 2})\to [0,1]$ is a symmetric measurable function. In the special case $(\Omega,\mu)=([0,1],\lambda)$ we get the (usual) notion of a graphon. Most definitions
and results extend with obvious modifications from graphons to generalised graphons  (see \Lo{Chapter~13.1} for details).
For example, the $n$-vertex random sample from $(V,\Omega,\mu)$ is obtained
by taking $\mu$-distributed $x_1,\dots,x_n\in \Omega$ and making $\{i,j\}$ an edge with probability $V(x_i,x_j)$, with all choices being independent.
In particular, we will need the facts that if $(V,\Omega,\mu)$ is a generalised graphon and
$\phi: (\Omega',\mu')\to(\Omega,\mu)$ is a measure-preserving  map between standard probability spaces,
then the function $V^\phi$ is measurable (which can be proved by adapting the proof of the first part of the lemma) and
\beq{eq:51}
\delta_\Box((V,\Omega,\mu),(V^\phi, \Omega',\mu'))=0,
\eeq
where we define $V^\phi(x,y):=V(\phi(x),\phi(y))$ for $x,y\in \Omega'$ and $\delta_\Box$ is the extension of the cut-distance to generalised graphons via the obvious analogues of~\eqref{eq:CutNorm} and~\eqref{eq:CutDistance}.

Let us return to the proof of the second part of the lemma.
	We can assume that the set $\{x\in [0,1]\mid D(x)\not=1\}$ has positive measure for otherwise $\phi$ is a measure-preserving map and $\delta_\Box(U,U^\phi)=0$ by~\eqref{eq:51}, as required. 
	Let $(V,[0,1]^2,\lambda^{\oplus 2})$ be the generalised graphon defined by $V((x,y),(x',y')):=U(x,x')$, for $x,y,x',y'\in [0,1]$. Thus $V=U^\pi$, where
	$\pi:[0,1]^2\to [0,1]$ is the (measure-preserving) projection on the first coordinate. 
	By~\eqref{eq:51}, it holds that
	$$
	\delta_\Box ((V,[0,1]^2,\lambda^{\oplus 2}),U)=0.
	$$
	
	By changing $D$ on a $\lambda$-null set, we can make it a Borel function with $D(x)\le 1+\e$ for every $x\in [0,1]$. Then
	$$
	\Omega:=\{(x,y)\in [0,1]\times \I R\mid 0\le y\le D(x)\}
	$$ 
	is a Borel subset of $[0,1]\times \I R$ (see e.g.\ \cite[Example 5.3.1]{Cohn13mt}) and thus induces a standard measurable space. Let $\mu$ be the restriction of the Lebesgue measure on $\I R^2$ to~$\Omega$. Define $W:\Omega^2\to [0,1]$ by $W((x,y),(x',y')):=U(x,x')$ for $(x,y),(x',y')\in \Omega$. Thus $U^\phi$ and $(W,\Omega,\mu)$ are measure-preserving pull-backs of the generalised graphon $(U,[0,1],\phi_*\mu)$ 
along respectively the map $\phi$ and the projection $\Omega\to[0,1]$ on the first coordinate.
	 Therefore we have by~\eqref{eq:51} and the Triangle Inequality for $\delta_\Box$ that
	$$
\delta_\Box(U^\phi,(W,\Omega,\mu))\le \delta_\Box(U^\phi,(U,[0,1],\phi_*\mu))+\delta_\Box((U,[0,1],\phi_*\mu),(W,\Omega,\mu))=0.
$$  
	
	Thus it suffices to show that the cut-distance $\delta_\Box$ between $(V,[0,1]^2,\lambda^{\oplus 2})$ and $(W,\Omega,\mu)$ is at most~$2\e$.
	The functions $V$ and $W$ and the measures $\lambda^{\oplus 2}$ and $\mu$
	coincide on~$X^2$, where $X:=[0,1]^2\cap \Omega$. 
	Since $D\not= 1$ on a set of positive measure, it holds that $\mu(X)<1$. 
The Borel subsets $[0,1]^2\setminus X$ and $\Omega\setminus X$ of $\I R^2$ have the same positive Lebesgue measure and thus,
	by Theorem~\ref{th:ISMS}, there is a Borel measure-preserving bijection $\psi$ between them. By letting $\psi$ be the identity function on $X$, we get a Borel measure-preserving bijection $\psi:\Omega\to [0,1]^2$. 
	Since $D\le 1+\e$, we have that $\Omega\setminus X\subseteq [0,1]\times [1,1+\e]$ has measure at most~$\e$. 	
The function $W$ and the pull-back $V^\psi$, as maps $\Omega^2\to [0,1]$, coincide on the set $X^2$ of measure at least $(1-\e)^2\ge 1-2\e$. It follows that the $d_\Box$-distance between them is at most $2\e$. (Indeed, when we compute it via the analogue of~\eqref{eq:CutNorm}, the integrand is bounded by 1 in absolute value and is non-zero on a set of measure at most~$2\e$.) This finishes the proof of the lemma.\end{proof}
}{}

Informally speaking, the following result states that if we delete a small subset of $[0,1]$ and stretch the rest of a graphon uniformly then the new graphon is close to the original one.

\begin{lemma}\label{lm:Delete01}
	Let $U\in \C W$ be a graphon, $s\in (0,1]$ be a non-zero real and $\phi:[0,1]\to [0,1]$ be the map that sends $x$ to~$sx$. 
	Then $U^\phi$ is a graphon and $\delta_\Box(U,U^\phi)\le 2(\frac1s -1)$.
\end{lemma}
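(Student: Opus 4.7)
The plan is to deduce this directly from Lemma~\ref{lm:AlmostMP} by computing the push-forward measure $\phi_*\lambda$ and its Radon--Nikodym derivative.

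First, I would check that $\phi_*\lambda \ll \lambda$. Since $\phi(x) = sx$ maps $[0,1]$ bijectively and bi-Lipschitz onto $[0,s]$, for any measurable $X \subseteq [0,1]$ we have
\[
(\phi_*\lambda)(X) = \lambda(\phi^{-1}(X)) = \lambda\bigl(\{x \in [0,1] : sx \in X\}\bigr) = \tfrac{1}{s}\,\lambda(X \cap [0,s]).
\]
In particular $\phi_*\lambda \ll \lambda$ with Radon--Nikodym derivative
\[
D(x) = \frac{\dd(\phi_*\lambda)}{\dd\lambda}(x) = \tfrac{1}{s}\,\I 1_{[0,s]}(x), \qquad x \in [0,1],
\]
so $D(x) \le \tfrac{1}{s} = 1 + \bigl(\tfrac{1}{s} - 1\bigr)$ for $\lambda$-a.e.\ $x \in [0,1]$.

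Now I would apply Lemma~\ref{lm:AlmostMP} with $\e := \tfrac{1}{s} - 1 \ge 0$. That lemma gives immediately that $U^\phi$ is a graphon and $\delta_\Box(U, U^\phi) \le 2\e = 2\bigl(\tfrac{1}{s} - 1\bigr)$, which is exactly the claimed bound.

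I do not anticipate any genuine obstacle here: everything reduces to the elementary computation of $D$ above, and the work has already been done inside Lemma~\ref{lm:AlmostMP}. The only minor subtlety is making sure that the bound $D \le 1 + \e$ holds for all $x$ (including $x \in (s, 1]$ where $D(x) = 0 \le 1 + \e$), but this is automatic since $\e \ge 0$.
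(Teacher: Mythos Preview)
Your proposal is correct and follows exactly the paper's own approach: compute $\phi_*\lambda$ as the uniform probability measure on $[0,s]$, read off the Radon--Nikodym derivative $D = \tfrac{1}{s}\,\I 1_{[0,s]}$, and invoke Lemma~\ref{lm:AlmostMP} with $\e = \tfrac{1}{s}-1$.
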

\begin{proof}  Clearly, the push-forward $\phi_*\lambda$ is the uniform probability measure on $[0,s]$ so the Radon-Nikodym derivative $\frac{\dd( \phi_*\lambda)}{\dd\lambda}$ is a.e.\ $1/s$ on~$[0,s]$ and $0$ on~$[s,1]$. The result now follows from Lemma~\ref{lm:AlmostMP}. 
\end{proof}

\arxiv{
\begin{remark} Without affecting $(\tW,\delta_\Box)$, we could have defined a graphon as a Borel symmetric function $[0,1]^2\to [0,1]$ and required that the measure-preserving maps in the definition of $\delta_\Box$ are Borel. Then some parts could be simplified (for example, the first claim of Lemma~\ref{lm:AlmostMP} would not be necessary as the function $U^\phi$ would be Borel for every Borel~$\phi$). However, 
		we prefer to use the (now standard) conventions from Lov\'asz' book~\cite{Lovasz:lngl}.			
\end{remark}
}{}

\subsection{Topology}\label{se:Topology}

The (pseudo-)metrics on the graphon space $\W$ that we use are $d_{\Box}$, $d_1$ and $d_{\infty}$ with the last two coming from the $L^1$ and $L^\infty$ norms respectively, where we view $\W$ as a subset of the vector space of bounded measurable functions on $[0,1]^2$. When we consider $[0,1]^n$, we use the $L^\infty$-metric, denoting it also by~$d_\infty$: $d_\infty(x,y)=\max\{|x_i-y_i|\mid i\in [n]\}$ for $x,y\in [0,1]^n$.
Let 
 $$
 B_d(x,r):=\{y\mid d(x,y)<r\}\quad \mbox{and}\quad \O B_d(x,r):=\{y\mid d(x,y)\le r\}
 $$ 
 denote respectively the open and closed balls of radius $r$ around a point $x$ in a metric~$d$. 
If it is clear from the context we write, e.g., $B_\infty(x,r)$ instead of $B_{d_\infty}(x,r)$, etc.


Let $X$ be a topological space and $\mu$ be a Borel probability measure on~$X$. Recall that $L^1(X,\mu)$ is the space of all integrable functions, $L^\infty(X,\mu)$ is the space of all essentially bounded functions, $C(X)$ is the space of all continuous functions and $\mathcal{P}(X)$ is the space of all Borel probability measures on~$X$.
Some of the topologies that will consider on these spaces are as follows:
\begin{itemize}
	\item the \emph{weak$^*$ topology on $\mathcal{P}(X)$}, where $\nu_n\xrightarrow{weak^*} \nu$ if and only if $\int_{X} f \dd\nu_n\to \int_{X} f \dd\nu$ for every $f\in C(X)$,
		\item the \emph{weak$^*$ topology on $L^1(X,\mu)$}, where $f_n\xrightarrow{weak^*} f$ if and only if $\int_{X} f_ng \dd\mu\to \int_{X} fg \dd\mu$ for every $g\in C(X)$,
	\item the \emph{$\tau^k$-topology on $\mathcal{P}(X)$} for given $k\in \I N$, which is the coarsest topology such that, for every bounded Borel function $a:X^k\to \mathbb{R}$, the map $\mu\mapsto \int_{X^k} a \dd\mu^{\oplus k}$ is continuous,
	\item the \emph{weak topology on $L^1(X,\mu)$}, where $f_n\xrightarrow{weak} f$ if and only if $\int_{X} f_ng \dd\mu\to \int_{X} fg \dd\mu$ for every $g\in L^\infty(X,\mu)$.
\end{itemize}
  By the \emph{weak} topology on $\{\mu\in \mathcal{P}([0,1])\mid \mu\ll \lambda\}$, we will mean the topology inherited from the weak topology on $L^1([0,1],\lambda)$ via the natural embedding $\mu\mapsto \frac{\dd\mu}{\dd\lambda}$. 
  On this set, the weak topology coincides with the $\tau^1$-topology, since every measurable function can be made Borel by changing it on a null-set. 
  Also, note that if we view each element $f$ of  $L^1(X,\mu)$ as a Borel measure $f \dd\mu$ defined by $(f \dd\mu)(A):=\int_{A} f(x) \dd\mu(x)$ on $X$, then on the intersection with $\mathcal{P}(X)$, the corresponding weak$^*$ topologies coincide.

Some classical results that we will need are collected in Appendix~\ref{app:Func}.

\hide{

\subsection{Some general results on LDP}

Here we present some results from the theory of large deviations that we will need in this paper. Recall that the general definition of a large deviation principle (LDP) is given in Definition~\ref{df:LDP}. 


If $X$ is a Hausdorff topological space, then $I:X\to [0,\infty]$ is called a \emph{good rate function} if,  for every $a\in [0,\infty)$, the level set $I^{-1}([0,a])$ is compact. (In particular, $I$ is lsc.)

The proof of the following well-known result can be found in e.g.\ \cite{DemboZeitouni10ldta}*{Theorem~4.2.1}.

\begin{theorem}[Contraction Principle]
\label{App:Contr Princ}
Let $X$ and $Y$ be Hausdorff topological spaces, $(\mu_n)_{n\in\mathbb{N}}$ be  a sequence of Borel probability measures on $X$, $I:X\to [0,\infty]$ be a good rate function, $f:X\to Y$ be a continuous function and let $s(n)\to\infty$ as $n\to\infty$.
If $(\mu_n)_{n\in \mathbb{N}}$ satisfies an LDP  on $X$ with speed $s$ and rate function $I$, then the sequence $(f_*\mu_n)_{n\in \mathbb{N}}$ of the push-forward measures on $Y$ satisfies an LDP with speed $s$ and rate function $I'$, where
$$I'(y):=\inf\{I(x):f(x)=y\}.$$
Moreover, $I'$ is a good rate function.\qqed
\end{theorem}

Let us state a weaker version of a result of Dawson--G\" artner~\cite{DawsonGartner87} (whose proof can be found also in e.g~\cite{DemboZeitouni10ldta}*{Theorem~4.6.1})  that suffices for our application.

\begin{theorem}[Dawson--G\" artner]\label{App:Inverse LDP}
Let $X$ be a set, $(\tau_k)_{k\in \mathbb{N}}$ be a sequence of increasing Hausdorff topologies, $I:X\to [0,\infty]$ be a function that is a good rate function in $\tau_k$ for every $k\in \mathbb{N}$, $(\mu_{n})_{n\in \mathbb{N}}$ be a sequence of Borel probability measures on the space $(X,\tau)$ where $\tau$ is the topology generated by $\bigcup_{k\in \mathbb{N}} \tau_k$. Let $s(n)\to\infty$ as $n\to\infty$.
If, for every $k\in \mathbb{N}$, $(\mu_n)_{n\in \mathbb{N}}$ satisfies LDP on $(X,\tau_k)$ with speed $s$ and rate $I$, then $(\mu_n)_{n\in \mathbb{N}}$ satisfies LDP on $(X,\tau)$ with speed $s$ and rate $I$.
Moreover, $I:(X,\tau)\to [0,\infty]$ is a good rate function.\qqed
\end{theorem}

\hide{
We consider the situation when two different sequences of measures satisfy the same LDP.
We note that our definition of exponential equivalence is formally stronger then the standard one, see~\cite[Definition~4.2.10]{DemboZeitouni10ldta}.

\begin{definition}[Exponential equivalence]\label{App:DefExpEq}
Let $(X,d)$ be a metric space, $\{\mu_n\}_{n\in \mathbb{N}}$ and $\{\widetilde{\mu}_n\}_{n\in \mathbb{N}}$ be sequences of Borel probability measures on $X$, and $\{\epsilon_n\}_{n\in\mathbb{N}}$ be a sequence of positive real numbers.
We say that $\{\mu_n\}_{n\in \mathbb{N}}$ and $\{\widetilde{\mu}_n\}_{n\in \mathbb{N}}$ are exponentially equivalent for the speed $\{\epsilon_n\}_{n\in \mathbb{N}}$ if there is a sequence of Borel probability measure $\{\Theta_n\}_{n\in \mathbb{N}}$ on $X\times X$ such that for every $n\in \mathbb{N}$ the marginals of $\Theta_n$ are $\mu_n$ and $\widetilde{\mu}_n$ and 
$$\varlimsup_{n\to\infty}\epsilon_n \log\left(\Theta_n\left(\mathcal{O}_\alpha\right)\right)=-\infty$$
for every $\alpha>0$ where $\mathcal{O}_{\alpha}=\{(x,y)\in X\times X:d(x,y)>\alpha\}$.
\end{definition}
}

When we will be dealing with LDPs on compact metric spaces, we may use the following alternative characterisation, that involves only the measures of (small) balls. Its proof can be found in~\cite[Lemma 2.3]{Varadhan16ld}; also, the claimed equivalence follows from \DZ{Theorems 4.1.11 and 4.1.18}. 

\begin{lemma}\label{lm:LDP}
	Let $(X,d)$ be  a compact metric space, $s:\I N\to (0,\infty)$ satisfy $s(n)\to\infty$, and $I:X\to [0,\infty]$ 
	be a lower semi-continuous function on~$(X,d)$. Then
	a sequence of Borel probability measures $(\mu_n)_{n\in \I N}$ on $(X,d)$ satisfies an LDP 
	with speed $s$
	and rate function $I$
	if and only if 
	\begin{eqnarray}
		\lim_{\eta\to 0}\liminf_{n\to\infty} \frac1{s(n)}\,{\log\Big(\mu_n\big(
		 B_d(x,\eta)
			\big)\Big)} 
		&\ge& -I(x),\quad\mbox{for every $x\in X$,}\label{eq:lower}\\
		\lim_{\eta\to 0}\limsup_{n\to\infty} \frac1{s(n)}\,{ \log\Big(\mu_n\big(
			\O  B_d(x,\eta)
			\big)\Big)} &\le & -I(x),\quad\mbox{for every $x\in X$}.\label{eq:upper}\qqed
	\end{eqnarray}
\end{lemma}
	
Of course, the validity of 	\eqref{eq:lower} and~\eqref{eq:upper} does not depend on whether we take open or closed balls there; our choice above is made merely for uniformity of notation.

In fact, under the assumptions of Lemma~\ref{lm:LDP}, the bounds~\eqref{eq:lowerGen} and~\eqref{eq:lower} (resp.\ \eqref{eq:upperGen} and~\eqref{eq:upper}) are equivalent to each other. So we will also refer to 
	\eqref{eq:lower} and~\eqref{eq:upper}
	as the \emph{(LDP) lower bound} and the \emph{(LDP) upper bound} respectively.

}

\section{Properties of the function $K_W$}\label{sec:KW}

We start with some basic observations concerning the function $K_W$ that was defined in~\eqref{eq:KW}.

\begin{proposition}\label{pr:K}
Let $W,U,V\in \C W$.
Then we have the following:
\begin{enumerate}[(a)]
	\item\label{it:KNonNeg} $K_W$ is non-negative,
	\item\label{it:KZero} $K_W(U)=0$ if and only if $\delta_\Box(W,U)=0$,
	\item\label{it:KDense} $K_W^{-1}([0,\infty))$ is a dense subset of $\RN(W)$ with respect the $\delta_\Box$-metric,
	\item\label{it:KLsc} $K_W:(\tW,\delta_\Box)\to [0,\infty]$ is a lower semi-continuous function,
	\item\label{it:KNonConst} if $W$ is not a constant graphon, then $\RN(W)$ contains at least two (not weakly isomorphic) graphons and	there is $U\in \C W$ such that $K_W(U)\in (0,\infty)$.
\end{enumerate}
\end{proposition}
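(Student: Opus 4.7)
\emph{Parts (a) and (b).} Part (a) is a direct application of Jensen's inequality: since $\L{\cdot}$ is convex with $\L{1}=0$ and every density $f:=\dd\mu/\dd\lambda$ satisfies $\int_{[0,1]}f\dd\lambda=1$, we get $\int\L{f}\dd\lambda\ge\L{1}=0$, so $K_W\ge 0$. For (b), the choice $\mu=\lambda$ gives density~$1$, hence the integrand $\L{1}=0$, and $W_\lambda$ is weakly isomorphic to~$W$; therefore $\delta_\Box(W,U)=0$ implies $K_W(U)=0$. Conversely, if $K_W(U)=0$, pick $\mu_n\ll\lambda$ with $\delta_\Box(U,W_{\mu_n})=0$ and $H(\mu_n\,\|\,\lambda):=\int\L{f_n}\dd\lambda\to 0$. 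Pinsker's inequality yields $\|\mu_n-\lambda\|_{TV}\to 0$, and the standard tensor-power estimate $\|\mu^{\oplus k}-\lambda^{\oplus k}\|_{TV}\le k\|\mu-\lambda\|_{TV}$ gives $|t(F,W_{\mu_n})-t(F,W)|\le k\|\mu_n-\lambda\|_{TV}\to 0$ for every graph $F$ on~$[k]$. Theorem~\ref{th:ConvOnW} then forces $W_{\mu_n}\to W$ in $\delta_\Box$, so $\delta_\Box(U,W)=0$.

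\emph{Part (c).} By the very definition of $\RN(W)$ as the $\delta_\Box$-closure of $\{\T{W_\mu}:\mu\ll\lambda\}$, it suffices to approximate each $W_\mu$ by graphons with finite $K_W$-value. Writing $f=\dd\mu/\dd\lambda$, set $f_n:=\min(f,n)/c_n$ where $c_n=\int\min(f,n)\dd\lambda\to 1$ by monotone convergence and $\mu_n:=f_n\cdot\lambda$. Then $f_n\le 2n$ for $n$ large, so $\int\L{f_n}\dd\lambda<\infty$ and $K_W(W_{\mu_n})<\infty$. Telescoping $f_n^{\otimes k}-f^{\otimes k}$ combined with $\|f_n-f\|_1\to 0$ and $\|f_n\|_1=\|f\|_1=1$ gives $\|f_n^{\otimes k}-f^{\otimes k}\|_{L^1([0,1]^k)}\le k\|f_n-f\|_1\to 0$, hence $t(F,W_{\mu_n})\to t(F,W_\mu)$ for every finite graph $F$, so $W_{\mu_n}\to W_\mu$ in $\delta_\Box$ by Theorem~\ref{th:ConvOnW}. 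A diagonal extraction finishes~(c).

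\emph{Part (d), the main obstacle.} Suppose $\T U_n\to\T U$ with $L:=\liminf K_W(\T U_n)<\infty$. Passing to a subsequence realising the liminf, pick $\mu_n\ll\lambda$ with $\delta_\Box(U_n,W_{\mu_n})=0$ and $H(\mu_n\,\|\,\lambda)\le K_W(\T U_n)+1/n$. The sublevel set $\{\mu\ll\lambda:H(\mu\,\|\,\lambda)\le L+1\}$ is sequentially compact in the weak $L^1$ topology (de la Vall\'ee Poussin gives uniform integrability of $\{f_n\}$, whereupon Dunford--Pettis applies), and $H(\cdot\,\|\,\lambda)$ is lower semi-continuous in this topology. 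Along a subsequence, $f_n\to f$ weakly in $L^1$ and $H(\mu\,\|\,\lambda)\le L$ for $\mu:=f\cdot\lambda$. The crux is showing $W_{\mu_n}\to W_\mu$ in $\delta_\Box$; by Theorem~\ref{th:ConvOnW} this reduces to
\[
\int_{[0,1]^k}\prod_{\{i,j\}\in E(F)}W(x_i,x_j)\prod_{i=1}^k f_n(x_i)\,\dd\lambda^{\oplus k}\ \longrightarrow\ \int_{[0,1]^k}\prod_{\{i,j\}\in E(F)}W(x_i,x_j)\prod_{i=1}^k f(x_i)\,\dd\lambda^{\oplus k}
\]
for every graph $F$ on $[k]$. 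The identity $H(\mu^{\oplus k}\,\|\,\lambda^{\oplus k})=k\,H(\mu\,\|\,\lambda)$ gives a uniform $L\log L$ bound on $\{f_n^{\otimes k}\}$, hence uniform integrability on $[0,1]^k$; combined with the one-dimensional weak $L^1$ convergence $f_n\to f$ and a density argument (tensor products of bounded functions are dense in $L^\infty([0,1]^k)$ for testing against uniformly integrable families) this yields the desired convergence of integrals. Passing from weak convergence of the one-dimensional densities to convergence of integrals of $k$-fold products against bounded measurable kernels is the principal technical hurdle. One concludes $\delta_\Box(U,W_\mu)=0$ and $K_W(\T U)\le H(\mu\,\|\,\lambda)\le L$.

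\emph{Part (e).} If $W$ is not essentially constant, there is a measurable $A\subset[0,1]$ with $0<\lambda(A)<1$ such that $W_\mu\not\sim W$ for $\mu:=\lambda(A)^{-1}\I 1_A\cdot\lambda$: indeed, were every restriction-rescaling weakly isomorphic to~$W$, then by iterating this on a shrinking nested family of sets and tracking the distribution of the ``degree function'' $x\mapsto\int W(x,y)\dd y$ one forces $W$ to be essentially constant, a contradiction. Since $\mu$ has bounded density $1/\lambda(A)$, we have $\int\L{1/\lambda(A)}\dd\lambda=\log(1/\lambda(A))<\infty$, and by part~(b) the non-equivalence $W_\mu\not\sim W$ yields $K_W(W_\mu)>0$. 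Thus $\T W$ and $\T{W_\mu}$ are two distinct elements of $\RN(W)$ and $K_W(W_\mu)\in(0,\infty)$.
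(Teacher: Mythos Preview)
Your treatment of parts (a), (c), and (d) is essentially the paper's own argument, correctly executed. In (d) you spell out the passage from weak $L^1$ convergence of the $f_n$ to weak $L^1$ convergence of the tensor powers $f_n^{\otimes k}$ a bit differently from the paper---you test against product functions and invoke separation, whereas the paper extracts subsequential weak limits $\nu_k$ for each $k$ and then identifies $\nu_k=\mu^{\oplus k}$ by testing on rectangles---but this is the same idea in different dress.

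Part (b) is a genuine improvement over the paper. The paper establishes the backward implication by the full weak-compactness machinery (de la Vall\'ee Poussin, Dunford--Pettis, Eberlein--\v Smulian) plus the equality case of Jensen, whereas your Pinsker-based route is both shorter and more transparent: $H(\mu_n\,\|\,\lambda)\to 0$ forces $\|\mu_n-\lambda\|_{TV}\to 0$, hence $\|\mu_n^{\oplus k}-\lambda^{\oplus k}\|_{TV}\to 0$, and convergence of all homomorphism densities follows immediately.

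Part (e) has a real gap. Your structural idea is good---find a single bounded-density $\mu$ with $W_\mu\not\sim W$, and both conclusions drop out via (b)---and this is cleaner than the paper's longer explicit construction of two distinct elements of $\RN(W)$ via the Lebesgue density theorem. But the sentence ``were every restriction-rescaling weakly isomorphic to~$W$, then by iterating this on a shrinking nested family of sets and tracking the distribution of the degree function \ldots\ one forces $W$ to be essentially constant'' is not a proof; the degree function can be constant for non-constant graphons, and no concrete mechanism is given. A clean way to close the gap: if $W_\mu\sim W$ for every $\mu=\lambda(A)^{-1}\I 1_A\cdot\lambda$, then in particular the edge density satisfies $\lambda(A)^{-2}\int_{A\times A}W=t(K_2,W)=:p$ for all measurable $A$ of positive measure. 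For disjoint $A,B$ this polarises (using symmetry of $W$) to $\int_{A\times B}W=p\,\lambda(A)\lambda(B)$, which extends to arbitrary $A,B$ and forces $W=p$ a.e., the desired contradiction.
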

\begin{proof} Recall that $\L{x}:=x\log(x)$ for $x\in[0,\infty]$. 
Its restriction to $[0,\infty)$ is a convex continuous function.

\ProofOf{it:KNonNeg} Since $\L{}$ is a convex function on $[0,\infty)$
and $\int\frac{\dd\mu}{\dd\lambda} \dd\lambda=1$ for every probability measure $\mu\ll\lambda$, we have by Jensen's inequality that $\int \L{\frac{\dd\mu}{\dd\lambda}} \dd\lambda\ge \L{\int\frac{\dd\mu}{\dd\lambda} \dd\lambda}=\L{1}=0$, giving the required inequality $K_W(U)\ge 0$ for every~$U$. 


\medskip
Before we proceed with the remaining parts of the proposition, we formulate an auxiliary claim.

\begin{claim}\label{cl:Jensen}
Let a measure $\mu$ and a sequence $(\mu_n)_{n\in\I N}$ of measures in $\{\nu\in\mathcal{P}([0,1])\mid\nu \ll\lambda\}$ satisfy $\mu_n\xrightarrow{weak}\mu$ (that is, 
$\frac{d \mu_n}{d \lambda}\xrightarrow{weak}\frac{\dd\mu}{\dd\lambda}$).
Then we have 
$$
\liminf_{n\to\infty} \int_{[0,1]}\L{\frac{\dd\mu_n}{\dd\lambda}}\dd\lambda\ge \int_{[0,1]}\L{\frac{\dd\mu}{\dd\lambda}}\dd\lambda,\quad\mbox{as $n\to\infty$}.
$$
\end{claim}
\begin{proof}
For $a\in [-1/\me,\infty)$ define
 $$
 Y_a:=\left\{f\in L^1([0,1],\lambda): f\ge 0\mbox{ $\lambda$-a.e.},\ 
 \int_{[0,1]} f\dd\lambda\le 1,\
 \int_{[0,1]} \L{f}\dd\lambda\le a\right\}.
 $$
 
Let us show that for every $a\in [-1/\me,\infty)$, the subset $Y_a\subseteq L^1([0,1],\lambda)$ is closed in the $L^1$-topology. Take any functions $f_n$ in $Y_a$ that converge in the $L^1$-norm to some~$f\in L^1([0,1],\lambda)$ as $n\to\infty$. Of course, $f\ge 0$ $\lambda$-a.e.\ and 
$\int_{[0,1]} f\dd\lambda\le 1$.
By passing to a subsequence, we can assume that $f_n(x)\to f(x)$ for $\lambda$-a.e.\ $x\in [0,1]$. By the continuity of the function $\L{}$, we have that $\L{f_n(x)}\to \L{f(x)}$ for $\lambda$-a.e.\ $x\in [0,1]$. Since $\L{}$ is bounded from below by $-1/\me$, it holds by Fatou's lemma that
 $$
 a\ge \liminf_{n\to\infty}\int_{[0,1]} \L{f_n}\dd\lambda \ge \int_{[0,1]} \left(\liminf_{n\to\infty} \L{f_n}\right)\dd\lambda = \int_{[0,1]} \L{f}\dd\lambda.
 $$
 Thus $f\in Y_a$ and we conclude that the set $Y_a$ is closed as required.
 
Since $\L{}$ is a convex function, the set $Y_a$ is also convex for every~$a$. Thus, by Mazur's lemma (Theorem~\ref{th:B9}), the set $Y_a$ is also weakly closed.

Let $a:=\liminf_{n\to\infty} \int_{[0,1]} \L{\frac{\dd\mu_n}{\dd\lambda}}\dd\lambda$. We have $a\ge -1/\me$. Suppose that $a<\infty$ as otherwise the claim holds trivially. For any $\e>0$ there are infinitely many $n$ with $\frac{\dd\mu_n}{\dd\lambda}\in Y_{a+\e}$; thus the weak limit $\frac{\dd\mu}{\dd\lambda}$ also belongs to the weakly closed set $Y_{a+\e}$. Since $\e>0$ was arbitrary, it holds that $\frac{\dd\mu}{\dd\lambda}\in Y_a$, as desired.\end{proof}

\ProofOf{it:KZero} If $\delta_\Box(U,W)=0$ then $K_W(U)\le 0$ (since we can take $\mu:=\lambda$ in the definition of $K_W$) while $K_W(U)\ge 0$ by Part~\ref{it:KNonNeg}, as desired. 

For the forward implication, suppose that $K_W(U)=0$.
By the definition of $K_W$, there is a sequence $(\mu_n)_{n\in \I N}$ of elements of $\mathcal{P}([0,1])$ such that $\mu_n\ll\lambda$, $\delta_\Box(W_{\mu_n},U)=0$ for every $n\in \I N$, and $\int \L{\frac{\dd\mu_n}{\dd\lambda}}\! \dd\lambda\to 0$ as $n\to\infty$.

Let $k\in \I N$.
It is easy to see that $\mu_n^{\oplus k}\ll\lambda^{\oplus k}$ and that
$$\frac{\dd\mu_n^{\oplus k}}{\dd\lambda^{\oplus k}}(x)=\prod_{i\in[k]} \frac{\dd\mu_n}{\dd\lambda}(x\ind{i})$$
holds for $\lambda^{\oplus k}$-almost every $x=(x\ind{1},\dots,x\ind{k})\in [0,1]^k$.
A direct use of Fubini-Tonelli's theorem gives
\begin{eqnarray*}
\int_{[0,1]^k} \L{\frac{\dd\mu_n^{\oplus k}}{\dd\lambda^{\oplus k}}} \dd\lambda^{\oplus k}
&=&\int_{[0,1]^k} \left(\prod_{i\in[k]} \frac{\dd\mu_n}{\dd\lambda}(x\ind{i})\right) \left(\sum_{i\in[k]} \log\left(\frac{\dd\mu_n}{\dd\lambda}(x\ind{i})\right)\right)\dd\lambda^{\oplus k} (x)\\
&=&k\int_{[0,1]} \L{\frac{\dd\mu_n}{\dd\lambda}} \dd\lambda,\quad\mbox{for every $n\in \I N$.}
\end{eqnarray*}
This tends to $0$ as $n\to\infty$ and thus is bounded. Since $\L{}\ge -1/\me$, the integrals above remain bounded if we replace $\L{}$ by the non-decreasing function $G(x):=\max\{\L{x},0\}$, $x\in [0,\infty]$. By the de la Vall\' ee-Poussin Theorem (Theorem~\ref{APP:VP}) applied to the function $G$, the set $\mathcal{F}:=\{\frac{\dd\mu_n^{\oplus k}}{\dd\lambda^{\oplus k}}\mid n\in\I N\}$ is uniformly integrable. Thus, by the Dunford--Pettis Theorem (Theorem~\ref{App:DP}) its weak closure is compact in the weak topology. By the Eberlein--\v Smulian Theorem (Theorem~\ref{APP:ES}) we can pass to a subsequence of $n$ such that $\frac{\dd\mu_n^{\oplus k}}{\dd\lambda^{\oplus k}}$ weakly converges to $\frac{d\nu_k}{\lambda^{\oplus k}}$ for some $\nu_k\in \mathcal{P}([0,1]^k)$ with $\nu_k\ll\lambda^{\oplus k}$.

By iteratively repeating the above step for each $k=1,2,\dots$ and doing diagonalisation, we can assume that, for every $k$, it holds that $\frac{\dd\mu_n^{\oplus k}}{\dd\lambda^{\oplus k}}\xrightarrow{weak}\frac{d\nu_k}{\lambda^{\oplus k}}$ as $n\to\infty$. 

Set $\mu:=\nu_1$. Let us show that, for each $k\ge 2$, the measures $\nu_k$ and $\mu^{\oplus k}$ are the same. By Dynkin's Theorem (see e.g.\ \cite[Corollary 1.6.3]{Cohn13mt}), it is enough to show that these two measures coincide on each product $A_1\times \dots\times A_k$ of Borel sets since such products are closed under intersection and generate the Borel $\sigma$-algebra of $[0,1]^k$. By weak convergence, we have
$$
\int \I 1_{A_1\times\dots\times A_k} d\nu_k=\lim_{n\to\infty} \int \I 1_{A_1\times\dots\times A_k} \dd\mu_n^{\oplus k} = \lim_{n\to\infty} \prod_{i=1}^k \int \I 1_{A_i}\dd\mu_n=  \prod_{i=1}^k  \int \I 1_{A_i}d\mu,
$$
 as desired.

Next we exploit the fact that $\delta_\Box$-convergence is equivalent to the homomorphism densities convergence (Theorem~\ref{th:ConvOnW}). 
For every graph $F=([k],E(F))$, we have by weak convergence that
\begin{equation*}
\begin{split}
t(F,U)=t(F,W_{\mu_n})= &\int_{[0,1]^{{k}}} \frac{\dd\mu_n^{\oplus k}}{\dd\lambda^{\oplus k}}(x)\prod_{\{i,j\}\in E(F)} W(x\ind{i},x\ind{j}) \dd\lambda^{\oplus k}(x) \\
\xrightarrow{n\to \infty} & \int_{[0,1]^{{k}}} \frac{\dd\mu^{\oplus k}}{\dd\lambda^{\oplus k}}(x)\prod_{\{i,j\}\in E(F)} W(x\ind{i},x\ind{j}) \dd\lambda^{\oplus k}(x)=t(F,W_\mu).
\end{split}
\end{equation*}
 Thus $\delta_\Box(U,W_\mu)=0$.
It remains to show that $\mu=\lambda$, i.e., $W_\mu=W$.
Claim~\ref{cl:Jensen} gives that $\int_{[0,1]} \L{\frac{\dd\mu}{\dd\lambda}}\dd\lambda=0$. Recall that the proof of Part~\ref{it:KNonNeg} shows that this integral is non-negative by applying Jensen's inequality to the strictly convex function $\L{x}$. Since we have equality, it must be the case that $\frac{\dd\mu}{\dd\lambda}$ is constant $\lambda$-a.e. Thus the probability measures $\mu$ and $\lambda$ coincide, proving~\ref{it:KZero}.

\ProofOf{it:KDense} It follows from the definition that $K_W^{-1}([0,\infty))\subseteq \RN(W)$.
To show that this subset is dense, it is enough to show that for every $\mu\in \mathcal{P}([0,1])$ such that $\mu\ll\lambda$ there is a sequence $(\mu_n)_{n\in \I N}\subseteq \mathcal{P}([0,1])$ such that $\delta_\Box(W_{\mu_n},W_\mu)\to 0$ and, for every $n\in \I N$, we have $\mu_n\ll\lambda$  and $K_W(W_{\mu_n})<\infty$.

Given $n\in\I N$, let $X_n:=\{x\in [0,1]:\frac{\dd\mu}{\dd\lambda}(x)\ge n\}$ and $A_n:=[0,1]\setminus X_n$. If $\lambda(X_n)=0$, then we let $\mu_n=\mu$. Otherwise let $\dd\mu_n:=g_n\dd\lambda$, where we define
$g_n:[0,1]\to\I R$ to be the constant $\frac{1}{\lambda(X_n)}\,\int_{X_n} \frac{\dd\mu}{\dd\lambda} \dd\lambda=\mu(X_n)/\lambda(X_n)<\infty$ on $X_n$ and let $g_n$ be $\frac{\dd\mu}{\dd\lambda}$ on~$A_n$.
Then we have $\int_{[0,1]} g_n \dd\lambda=1$ and $\| g_n\|_\infty \le \max\{\mu(X_n)/\lambda(X_n),n\}<\infty$  for every $n\ge 1$. Also, we can take the measure $\mu_n\ll\lambda$ in the definition of
$K_W(W_{\mu_n})$ as an infimum over a set of measures, obtaining that
$$
K_W(W_{\mu_n})\le 
\int \L{g_n}\dd\lambda\le  \L{\max\{\mu(X_n)/\lambda(X_n),n\}}<\infty.
$$
Moreover, it is easy to see that $\mu^{\oplus k}(A^{k}_n)=\mu_n^{\oplus k}(A^{k}_n)$ tends to $1$ as $n\to\infty$ for every fixed $k\in \I N$.
For any graph $F=([k],E(F))$, we have
\begin{equation*}
\begin{split}
t(F,W_{\mu_n})= & \int_{A_n^{k}} \prod_{\{i,j\}\in E(F)} W(x\ind{i},x\ind{j}) \dd\mu_n^{\oplus k}(x)+\int_{[0,1]^k\setminus A_n^{k}} \prod_{\{i,j\}\in E(F)} W(x\ind{i},x\ind{j}) \dd\mu_n^{\oplus k}(x) \\
= & \int_{A_n^{k}} \prod_{\{i,j\}\in E(F)} W(x\ind{i},x\ind{j}) \dd\mu^{\oplus k}(x)+\int_{[0,1]^k\setminus A_n^{k}} \prod_{\{i,j\}\in E(F)} W(x\ind{i},x\ind{j}) \dd\mu_n^{\oplus k}(x) \\
\xrightarrow{n\to \infty} & \int_{[0,1]^{k}} \prod_{\{i,j\}\in E(F)} W(x\ind{i},x\ind{j}) \dd\mu^{\oplus k}(x) + 0\ =\ t(F,W_\mu)
\end{split}
\end{equation*}
and the claim follows.

\ProofOf{it:KLsc} We have to show that the function $K_W:\tW\to [0,\infty]$ is lsc.
Take an arbitrary sequence $(U_n)_{n\in\I N}$ of graphons convergent to a graphon $U$ in the $\delta_\Box$-distance.
We need to show that $K_W(U)\le \liminf_{n\to\infty} K_W(U_n)$.
Assume that $\liminf_{n\to\infty} K_W(U_n)<\infty$ as otherwise there is nothing to do. By passing to a subsequence we can in fact assume that $\sup_{n\in\I N}K_W(U_n)<\infty$.
After passing to a further subsequence, we can pick a sequence $(\mu_n)_{n\in \I N}$ in $\mathcal{P}([0,1])$ such that $\delta_{\Box}(U_n,W_{\mu_n})=0$, $\mu_n\ll\lambda$ and
$$\liminf_{n\to\infty} \int_{[0,1]} 
\L{\frac{\dd\mu_n}{\dd\lambda}} \dd\lambda=\liminf_{n\to\infty} K_W(U_n).$$
By using Theorems~\ref{App:DP},~\ref{APP:VP} and~\ref{APP:ES} similarly as above and passing to a subsequence, we find $\mu\in \mathcal{P}([0,1])$ such that $\mu\ll\lambda$ and $\frac{\dd\mu_n^{\oplus k}}{\dd\lambda^{\oplus k}}\xrightarrow{weak} \frac{\dd\mu^{\oplus k}}{\dd\lambda^{\oplus k}}$ for every $k\in \I N$.
Similarly as in the proof of Part~\ref{it:KZero}, one gets
$$t(F,U_n)=t(F,W_{\mu_n})\to t(F,W_\mu).$$
This implies $\delta_\Box(U,W_\mu)=0$ and $K_W(U)\le \int_{[0,1]} \L{\frac{\dd\mu}{\dd\lambda}}\dd\lambda$.
Another use of Claim~\ref{cl:Jensen} implies $K_W(U)\le \liminf_{n\to\infty} K_{W}(U_n)$ because $\frac{\dd\mu_n}{\dd\lambda}\xrightarrow{weak} \frac{\dd\mu}{\dd\lambda}$.
This finishes the proof.

\ProofOf{it:KNonConst} Suppose that $W$ is not a constant graphon. First, we find $U,V\in \RN(W)$ such that $\delta_\Box(U,V)>0$.

Since $W$ is not constant there are $a<b\in [0,1]$ such that $A:=W^{-1}([0,a ])$ and $B:=W^{-1}([b,1])$ both have non-zero measure. Let $\epsilon:=\frac{(b-a)}{9}>0$. 
Using Lebesgue's density theorem, we can find pairwise disjoint intervals $X_A,Y_A,X_B,Y_B$ of the same positive measure such that
$$\lambda^{\oplus 2}\left(\,(X_{A}\times Y_{A})\cap A\, \right)\ge (1-\epsilon)\, \lambda^{\oplus 2}(X_{A}\times Y_{A})$$
$$\lambda^{\oplus 2}\left(\,(X_{B}\times Y_{B})\cap B \right)\ge (1-\epsilon)\, \lambda^{\oplus 2}(X_{B}\times Y_{B}).$$
We use these intervals to define distinct $U,V\in \RN(W)$.

\begin{definition}\label{def:Lebesgue}
For disjoint intervals $D_0,D_1\subseteq [0,1]$, 
we define the graphon $W_{(D_0,D_1)}$ as follows. For $(x,y)\in [0,1]^2$, let
$i,j\in \{0,1\}$ be the indices satisfying $x\in I^2_i$ and $y\in I^2_j$, and define 
$$
W_{(D_0,D_1)}(x,y):=W(\psi^{-1}_{D_i}(x)+i/2,\psi^{-1}_{D_j}(y)+j/2),
$$
where, for an interval $D$ with end-points $c<d$, $\psi_{D}(z):=\frac{z-c}{2(d-c)}$ is the linear maps that bijectively maps $D$ to $[0,1/2]$. 
\end{definition}

Informally speaking, $W_{(D_0,D_1)}$ is obtained from $W$ by putting half of the whole mass uniformly on $D_0$ and half on~$D_1$. 

Set $U:=W_{(X_{A},Y_{A})}$ and $V:=W_{(X_{B},Y_{B})}$. Clearly, $\T U,\T V\in\RN(W)$. Thus $U$ is at most $a$ (resp.\ $V$ is at least $b$) on $(0,1/2)\times (1/2,1)$ apart a subset of measure at most $\epsilon$.
Let us show that $\delta_\Box(U,V)>0$.
Let $\varphi:[0,1]\to[0,1]$ be an arbitrary measure preserving bijection. 
We can assume that at least half measure of the pre-image $\varphi^{-1}(\,(0,\frac12)\,)$ lies in the first half of $[0,1]$ by the symmetry between $X_{A}$ and~$Y_{A}$. Thus $C:=\varphi^{-1}(\,(0,\frac12)\,)\cap (0,\frac12)$ has measure at least $1/4$. By the measure preservation of $\varphi$, we have that the measure of $D:=\varphi^{-1}(\,(\frac12,1)\,)\cap (\frac12,1)$ is
 $$
 \textstyle
 \lambda(D)=\frac12-\lambda\left(\varphi^{-1}(\,(\frac12,1)\,)\cap (0,\frac12)\right) = \frac12-\lambda\left((0,\frac12)\setminus C\right)= \lambda(C).
 $$
Then we have
\begin{equation*}
\begin{split}
\int_{C\times D} V^{\varphi}(x,y) \dd\lambda^{\oplus 2}\ &=\ \int_{\varphi(C)\times \varphi(D)} V(x,y) \dd\lambda^{\oplus 2}\ \ge\  b\lambda^{\oplus 2}(C\times D)-\frac{\epsilon}{4} \\
\int_{C\times D} U(x,y) \dd\lambda^{\oplus 2}\ &\le  \ a\lambda^{\oplus 2}(C\times D)+\frac{\epsilon}{4}.
\end{split}
\end{equation*}
 and thus
$$d_{\Box}(U,V^\varphi)\ge (b-a)\lambda(C_i\times D_i)-\frac{\epsilon}{2}\ge \frac{(b-a)-8\epsilon}{16}>\frac{(b-a)}{144}.$$
Since $\varphi$ was an arbitrary measure preserving function, we have $d_{\Box}(U,V)>0$, as desired.

Finally, take any $\T V\in \RN(W)$ not weakly isomorphic to~$W$, which exists by above. By Part~\ref{it:KDense}, there is $\T U\in\RN(W)$ such that $K_W(U)<\infty$ and $\delta_\Box(U,V)<\delta_\Box(W,V)$. Then $\delta_\Box(U,W)>0$ and, by Parts~\ref{it:KNonNeg} and \ref{it:KZero}, we have $K_W(U)>0$. Thus $K_W(U)\in (0,\infty)$, finishing the proof.\end{proof}

\section{LDPs for $(\tH_{n,W})_{n\in \I N}$}\label{sec:H_{n,W}}

In this section, we completely resolve the LDP problem for the measures $\tH_{n,W}$, which come from sampling uniform $x\in [0,1]^n$ and taking the graphon of the edge-weighted graph $H$ on $[n]$ with $w_H(i,j):=W(x_i,x_j)$.

\begin{theorem}\label{thm:GeneralSpeedsH}
Take any graphon $W\in\C W$ and  any function $s:\I N\to(0,\infty)$ with $s(n)\to\infty$ as $n\to\infty$. We consider the existence of an LDP for the sequence $(\tH_{n,W})_{n\in\I N}$ of probability measures on the metric space $(\tW,\delta_\Box)$ with speed~$s$.
\begin{enumerate}[(a)]
	\item\label{it:GSH1} If $s(n)= o(n)$ then there is an LDP with rate function~$\chi_W$.
	\item\label{it:GSH2} If $s(n)=(c+o(1))n$  for some $c\in (0,\infty)$ then there is an LDP with rate function~$\frac1c\,K_W$. 
	\item\label{it:GSH3} If $s(n)=\omega(n)$, then  there is an LDP with rate function~$\chi_{\RN(W)}$.
	\item \label{it:GSHConst} If $W$ is a constant graphon, then LDP holds (regardless of the speed $s$) with rate function $\chi_W=K_W=\chi_{\RN(W)}$.
    \item\label{it:GSH4} If $W$ is not a constant graphon and $s(n)$ does not satisfy any of the assumptions \ref{it:GSH1}--\ref{it:GSH3}, then there is no LDP (that is, no function $I:\tW\to [0,\infty]$ satisfies Definition~\ref{df:LDP}).
 \end{enumerate}
\end{theorem}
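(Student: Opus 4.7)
The plan is to establish part~(b) first via a Sanov-type argument, derive (a) and (c) from it, observe that (d) is trivial, and handle (e) by uniqueness of LDP rate functions combined with a subsequence argument.

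For part~(b): if $H\sim\tH_{n,W}$ arises from a uniform draw $(x_1,\dots,x_n)\in[0,1]^n$, then $\f{H}$ takes the value $W(x_i,x_j)$ on $I_i^n\times I_j^n$, so $\widetilde{\f{H}}$ depends on the sample only through the empirical measure $\mu^n_x:=\frac1n\sum_{i=1}^n\delta_{x_i}$; in fact $\widetilde{\f{H}}=\widetilde{W_{\mu^n_x}}$ in the generalised-graphon sense. I then invoke Eichelsbacher's~\cite{Eichelsbacher97} strengthening of Sanov's theorem, which provides an LDP for $(\mu^n_x)_{n\in\I N}$ on $\mathcal{P}([0,1])$ in a topology fine enough that $\mu\mapsto\int_{[0,1]^k}a\,\dd\mu^{\oplus k}$ is continuous for every bounded Borel $a:[0,1]^k\to\I R$, with speed $n$ and good rate function equal to the relative entropy $H(\mu|\lambda)=\int_{[0,1]}\L{\frac{\dd\mu}{\dd\lambda}}\dd\lambda$ (set to $+\infty$ if $\mu\not\ll\lambda$). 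Taking $a(x_1,\dots,x_k):=\prod_{\{i,j\}\in E(F)}W(x_i,x_j)$ shows that $\mu\mapsto t(F,W_\mu)$ is continuous, and since $\delta_\Box$-convergence is equivalent to convergence of all homomorphism densities (Theorem~\ref{th:ConvOnW}), the map $\mu\mapsto\widetilde{W_\mu}$ is continuous into $(\tW,\delta_\Box)$. The contraction principle then produces an LDP at speed $n$ with rate function $\inf\{H(\mu|\lambda):\delta_\Box(W_\mu,U)=0,\ \mu\ll\lambda\}=K_W(\widetilde U)$, and rescaling the speed by $c$ delivers the rate $\frac{1}{c}K_W$ claimed in~(b).

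Parts~(a) and~(c) follow from~(b). For~(a): compactness of $\tW$ together with the lower semi-continuity of $K_W$ (Proposition~\ref{pr:K}\ref{it:KLsc}) and the zero-set identity $\{K_W=0\}=\{\widetilde W\}$ (Proposition~\ref{pr:K}\ref{it:KZero}) yield $\inf_F K_W>0$ for every closed $F\not\ni\widetilde W$, so $\tH_{n,W}(F)\le\exp(-\Omega(n))=\exp(-\omega(s(n)))$ when $s(n)=o(n)$; the lower bound at $\widetilde W$ is trivial. For~(c), I first show that $\tH_{n,W}$ is supported on $\RN(W)$: the empirical measure $\mu^n_x$ is weakly approximated by absolutely continuous measures $\mu_k\ll\lambda$ (e.g.\ by smearing each atom over a tiny interval), and $t(F,W_{\mu_k})\to t(F,W_{\mu^n_x})$ for every graph $F$ by dominated convergence, so $\widetilde{\f H}=\widetilde{W_{\mu^n_x}}\in\RN(W)$ by Theorem~\ref{th:ConvOnW} and the definition of $\RN(W)$. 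The upper bound on closed sets disjoint from $\RN(W)$ is then trivially $-\infty$. For the lower bound on an open $G$ meeting $\RN(W)$, Proposition~\ref{pr:K}\ref{it:KDense} provides some $\widetilde V\in G$ with $K_W(\widetilde V)<\infty$, and applying~(b) gives $\tH_{n,W}(G)\ge\exp(-O(n))=\exp(-o(s(n)))$ since $s(n)=\omega(n)$.

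Part~(d) is immediate since then $\f H=W$ a.e.\ almost surely, so $\tH_{n,W}=\delta_{\widetilde W}$ and all three candidate rates coincide with $\chi_{\{\widetilde W\}}$. For~(e), Proposition~\ref{pr:K}\ref{it:KNonConst} guarantees that for non-constant $W$ the three rates $\chi_W$, $\frac{1}{c}K_W$ (varying $c\in(0,\infty)$), and $\chi_{\RN(W)}$ are pairwise distinct lsc functions on $\tW$. If an LDP held at speed $s$ with lsc rate $I$, then along any subsequence with $s(n)/n\to c\in[0,\infty]$ the LDP would persist with the same $I$, and uniqueness of lsc rate functions on the compact metric space $(\tW,\delta_\Box)$ would force $I$ to equal the rate dictated by~(a), (b), or~(c) according to the value of $c$; picking two subsequences with different limit values of $s(n)/n$ (which is possible precisely when $s$ falls outside classes~(a)--(c)) yields a contradiction. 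The main obstacle in this programme is the topological fine-tuning in step~(b): one must apply Sanov's theorem in a topology simultaneously fine enough to make $\mu\mapsto\widetilde{W_\mu}$ continuous while keeping $H(\cdot|\lambda)$ a good rate function, which is exactly what Eichelsbacher's extension furnishes.
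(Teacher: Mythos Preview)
Your overall architecture matches the paper's: part~(b) via Eichelsbacher plus contraction, then (a) and (c) as corollaries, (d) direct, (e) by uniqueness along subsequences. Parts (a), (b), (e) and the lower bound in (c) are correct and essentially identical to the paper's arguments.

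There is, however, a genuine error in your upper-bound argument for (c) and a related slip in (d), both stemming from the loop convention: the graphon $\f{H}$ of a weighted graph on $[n]$ is \emph{zero on the diagonal blocks} $I_i^n\times I_i^n$. Your claim that $\tH_{n,W}$ is supported on $\RN(W)$ is false. Take $W\equiv p\in(0,1)$: then $\RN(W)=\{\widetilde W\}$, but $\widetilde{\f{H}}$ has edge density $p(1-1/n)\ne p$, so $\widetilde{\f{H}}\notin\RN(W)$. Your smearing argument fails precisely on the repeated-index terms in $t(F,W_{\mu_\epsilon})$: for instance $\frac{1}{(2\epsilon)^2}\int_{[x_i-\epsilon,x_i+\epsilon]^2}W\to p$ (the local average), not $W(x_i,x_i)=0$. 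So $t(F,W_{\mu_\epsilon})$ does \emph{not} converge to $t(F,W_{\mu^n_x})$; the invocation of dominated convergence is a red herring, and the correct tool (Lebesgue differentiation) only handles the off-diagonal terms. The paper deals with this via Proposition~\ref{pr:concentration 1/n}, which proves the correct statement that $\tH_{n,W}$ is supported on the $\frac{1}{n}$-neighbourhood of $\RN(W)$; this is enough for the upper bound since for closed $F$ disjoint from $\RN(W)$ one has $\tH_{n,W}(F)=0$ for all large $n$.

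The same diagonal issue breaks your (d): for constant $W\equiv p\ne 0$, it is not true that $\f{H}=W$ a.e.; rather $\tH_{n,W}$ is the point mass at a graphon that is $p$ off-diagonal and $0$ on the diagonal blocks, and one needs the (easy) observation that these converge to $\widetilde W$ in $\delta_\Box$.
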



Before we prove Theorem~\ref{thm:GeneralSpeedsH}, let us show that the measures $(\tH_{n,W})_{n\in \I N}$ are supported around $\RN(W)$ in the metric $\delta_\Box$ (which implies that any rate function for any speed must be $\infty$ outside of the closed set~$\RN(W)$).

\begin{proposition}\label{pr:concentration 1/n}
Let $W\in \C W$ and $n\in \I N$.
Then
$$\tH_{n,W}\left(\left\{\widetilde{U}\in \tW:\delta_\Box(\widetilde{U},\RN(W))\le \frac{1}{n}\right\}\right)=1.$$
\end{proposition}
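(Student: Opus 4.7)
The plan is to construct, for almost every realisation of the sample, an element of $\RN(W)$ that lies within $L^1$-distance (hence $\delta_\Box$-distance) at most $1/n + o(1)$ of $\f{H}$, the additional $1/n$ coming entirely from the diagonal blocks where 2D Lebesgue differentiation fails.

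Almost surely the samples $x_1,\dots,x_n$ are pairwise distinct and lie in $(0,1)$. For small $\delta>0$ (depending on the sample) the intervals $J_i:=[x_i,x_i+\delta]$ are pairwise disjoint subsets of $[0,1]$. Let $\mu_\delta$ be the probability measure on $[0,1]$ whose density with respect to $\lambda$ is $\frac1{n\delta}$ on $\bigcup_i J_i$ and $0$ elsewhere, so $\mu_\delta\ll\lambda$. Let $\psi_\delta:[0,1]\to[0,1]$ be the piecewise-affine map sending each $I_i^n$ bijectively onto $J_i$ at constant speed; this $\psi_\delta$ is measure-preserving from $([0,1],\lambda)$ to $([0,1],\mu_\delta)$, so the pull-back $W^{\psi_\delta}\in\C W$ is weakly isomorphic to $W_{\mu_\delta}$. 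In particular $\widetilde{W^{\psi_\delta}}\in \RN(W)$.

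Splitting the integral over the blocks $I_i^n\times I_j^n$ and changing variables (the Jacobian on each block is $(n\delta)^2$) gives
$$
\|W^{\psi_\delta}-\f{H}\|_1 \;=\; \sum_{i,j\in[n]} \frac{1}{(n\delta)^2} \iint_{J_i\times J_j} \bigl|W(u,v)-W(x_i,x_j)\bigr|\,du\,dv.
$$
For the $n(n-1)$ off-diagonal blocks $i\ne j$, the pair $(x_i,x_j)$ is uniformly distributed on $[0,1]^2$ and so is almost surely a 2D Lebesgue point of $W$; by the Lebesgue differentiation theorem, taking a union bound over the finitely many such pairs, each summand tends to $0$ as $\delta\to 0$ almost surely, and so does their sum. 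For the $n$ diagonal blocks $i=j$, I would use the trivial bound $|W-W(x_i,x_i)|\le 1$, which gives each summand at most $\frac1{n^2}$ and a total contribution at most $\frac1n$. Combining, $\limsup_{\delta\to 0}\|W^{\psi_\delta}-\f{H}\|_1\le \frac1n$ almost surely, and since $\delta_\Box\le d_\Box\le \|\cdot\|_1$ and $\widetilde{W^{\psi_\delta}}\in\RN(W)$, the conclusion $\delta_\Box(\widetilde{\f{H}},\RN(W))\le \frac1n$ follows.

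\textbf{Main obstacle.} The delicate point is that 2D Lebesgue differentiation at the diagonal points $(x_i,x_i)$ is not automatic (the diagonal is $\lambda^{\oplus 2}$-null in $[0,1]^2$, so the full-measure Lebesgue-point set need not meet the diagonal in a positive 1D measure). The clean resolution — and the source of the $1/n$ in the statement — is to give up on proving that $\f{H}$ actually lies in $\RN(W)$ and instead absorb the contribution of the $n$ diagonal blocks into the deterministic bound $1/n$ by the crudest possible estimate, which suffices because those blocks together occupy only a $1/n$ fraction of $[0,1]^2$.
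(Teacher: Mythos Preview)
Your argument is correct, and it takes a genuinely different route from the paper's proof.

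The paper proceeds in two steps: first it handles graphons $W$ with finite range by applying the Lebesgue Density Theorem in $[0,1]^n$ to the set $X_H=\{x:\forall i<j,\ W(x_i,x_j)=w_H(i,j)\}$, which has positive measure precisely because $W$ takes finitely many values; then it reduces the general case to this one by approximating $W$ in $L^\infty$ by finitely-valued graphons and using that $\RN(\cdot)$ is stable under such approximation.

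You bypass both steps by applying the 2D Lebesgue Differentiation Theorem directly at the off-diagonal sample pairs $(x_i,x_j)$, which are almost surely Lebesgue points of $W$ since each such pair is uniformly distributed in $[0,1]^2$. The one-sided squares $J_i\times J_j$ anchored at a corner have bounded eccentricity, so differentiation applies, and your measure-preserving $\psi_\delta$ realises $W^{\psi_\delta}$ as a representative of $W_{\mu_\delta}$ with $\mu_\delta\ll\lambda$, exactly matching the paper's definition of $\RN(W)$. Your diagnosis of the diagonal obstruction is also the right one and explains the $1/n$ in the statement.

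Your approach is shorter and more elementary: differentiation in dimension $2$ rather than density in dimension $n$, and no case distinction on~$W$. The paper's route, on the other hand, makes more transparent why the sampled graphon is \emph{exactly} in $\RN(W)$ (up to $1/n$) when $W$ is a step function, which aligns with how $\RN(W)$ is described for step graphons elsewhere in the paper.

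One small correction: on the diagonal blocks $\f{H}$ takes the value $0$ (weighted graphs have no loops), not $W(x_i,x_i)$; but since you only use the trivial bound $\le 1$ there, this does not affect the argument.
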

\begin{proof} 
The proof consists of two steps.
In the first step we show that the claim holds whenever $W$ attains only finitely many values.
In the second step we use the fact that such graphons are dense in $\W$ in the $L^\infty$-norm to extend the claim to any $W$. It will be convenient in the proof to use also the measures $(\mathbb{H}_{n,W})_{n\in \I N}$ on the space $\mathcal{W}$. Recall that this is the version of $(\tH_{n,W})_{n\in \I N}$, where we do not take the equivalence class $\widetilde{f^H}$ of the sampled weighted graph $H$ but simply output~$f^H$.

\medskip\noindent{\bf Step (I).}
Suppose first that $W$ attains only finitely many values, i.e., the set $W\!\left([0,1]^2\right)$ is finite.
Then the measures $\mathbb{H}_{n,W}$ are atomic for every $n\in \I N$.
Let $U\in \W$ be such that $\mathbb{H}_{n,W}(\{U\})>0$.
Note that by the definition of $\mathbb{H}_{n,W}$ there is $H\in \mathcal{H}_n$ such that $U=f^H$ and if we put
$$
X_H:=\left\{x\in [0,1]^{{n}}:\mbox{for all }i<j\mbox{ in } [n] \ \ W(x\ind{i},x\ind{j})=w_H(i,j)\right\},
$$
then we have $\lambda^{\oplus n}\left(X_H\right)>0$.
By $d_1\ge d_\Box$, it is enough to find a sequence $(V_k)_{k\in \I N}$ in $\W$ such that 
$\lim_{k\to \infty}d_1\left(V_k,U\right)\le \frac{1}{n}$ and $\widetilde{V}_k\in \RN(W)$ for each~$k$.

Recall that $B_\infty(x,r)$ denotes the (open) radius-$r$ ball around $x\in [0,1]^{{n}}$ in the $L^\infty$-metric.
By the Lebesgue Density Theorem (for the $L^\infty$-metric on $[0,1]^{{n}}$), the set of density points of $X_H$ has measure $\lambda^{\oplus n}(X_H)>0$ so it contains a point ${{x}}\in (0,1)^{{n}}$ such that ${{x}}\ind{i}\not= {{x}}\ind{j}$ for every $i<j$  in $[n]$. Recall that ${x}$ being a density point of $X_H$ means that 
$$\lim_{\e\to 0}\frac{\lambda^{\oplus n}(X_H\cap B_\infty\left({{x}},\e\right))}{\lambda^{\oplus n}(B_\infty\left({{x}},\e\right))} =1.$$

Let $k\in \I N$ be large enough. Then the intervals $J_{i,k}:=\left[{{x}}\ind{i}-\frac{1}{k},{{x}}\ind{i}+\frac{1}{k}\right]$, for $i\in [n]$, are pairwise disjoint.
Similarly to Definition~\ref{def:Lebesgue}, let
$V_k:=W_{(J_{1,k},\dots,J_{n,k})}$
 be obtained from $W$ by putting mass $1/n$ uniformly into each of the intervals $J_{1,k},\dots,J_{n,k}$.
By definition, $V_k\in \RN(W)$.
Let $i<j$ in $[n]$ and define
$$A_{i,j,k}:=\left\{(a,b)\in I^n_{i}\times I^n_j:V_k(a,b)\not= w_H(i,j)\right\}.$$
Since the intervals $I_1^n,\ldots,I_n^n$ in $V_k$ correspond to the invervals $J_{1,k},\ldots,J_{n,k}$ in $W$, we have by Fubini-Tonelli's Theorem that
\begin{equation*}
\begin{split}
\lambda^{\oplus 2}\left(A_{i,j,k}\right)\,=\, & \textstyle\frac{(2k)^2}{n^2}\, \lambda^{\oplus 2}\left(\left\{(a,b)\in B_\infty\!\left(({{x}}\ind{i},{{x}}\ind{j}),\frac{1}{k}\right): W(a,b)\not =w_H(i,j) \right\}\right) \\
\,=\,& \textstyle\frac{(2k)^n}{n^2}\, \lambda^{\oplus n}\left(\left\{{{z}}\in B_\infty\left({{x}},\frac{1}{k}\right): W({{z}}\ind{i},{{z}}\ind{j})\not =w_H(i,j) \right\}\right)\\
\,\le\, & \textstyle\frac{(2k)^n}{n^2}\, \lambda^{\oplus n}(B_\infty\!\left({{x}},\frac{1}{k}\right)\setminus X_H)\,=\,
\frac{\lambda^{\oplus n}(B_\infty\left({{x}},\frac{1}{k}\right)\setminus X_H)}{n^2\,\lambda^{\oplus n}(B_\infty\!\left({{x}},\frac{1}{k}\right))},
\end{split}
\end{equation*}
 which tends to $0$ as $k\to\infty$ since ${x}$ is a density point for $X_H$. 

Noticing that, for $1\le i<j\le k$, $A_{i,j,k}$ is exactly the subset of $I^n_i\times I^n_j$ where $U$ and $V_k$ differ we have
\begin{equation*}
\begin{split}
d_1\left(V_k,U\right)\ =\ &\int_{[0,1]^2}\left|V_k-U\right| \dd\lambda^{\oplus 2} \\
\ \le\ & \sum_{i\in [n]} \lambda^{\oplus 2}\left(I^n_i\times I^n_i\right)+2\sum_{1\le i<j\le n} \lambda^{\oplus 2}\left(A_{i,j,k}\right)\ \xrightarrow{k\to \infty}\  \frac{1}{n}.
\end{split}
\end{equation*}
This finishes the first step of the proof.

\medskip\noindent{\bf Step~(II).} Let $W$ be an arbitrary graphon and let $n\in\I N$.
For $k\in \I N$, let $W_k\in \W$ be such that $d_\infty(W,W_k)\le \frac1k$ and $W_k$ attains only finitely many values. Note that
for every Borel probability measure $\mu\in \mathcal{P}([0,1])$ that is absolutely continuous with respect to $\lambda$, we have
\[
\delta_\Box\left(W_\mu,(W_k)_\mu\right)\le \frac1k.
\]
 Indeed, we can assume that we use the same measure-preserving map $\phi:([0,1],\lambda)\to ([0,1],\mu)$ when defining $W_\mu$ and $(W_k)_\mu$ as in~\eqref{eq:WMu}. Then $\delta_\Box\left(W_\mu,(W_k)_\mu\right)\le \sup_{A,B}|\int_{A\times B}(W-W_k)\mu^{\oplus2}|$ while the function $|W-W_k|$ is at most $\frac1k$ except on a set which is null in $\lambda^{\oplus 2}$ and thus also in $\mu^{\oplus 2}\ll \lambda^{\oplus 2}$.

It follows that 
 \begin{equation}\label{eq:blabla2}
 \begin{split}
 \left\{\widetilde{U}\in \tW:\delta_\Box\left(\widetilde{U},\RN(W_k)\right)\le \frac{1}{n}\right\}\subseteq
  \left\{\widetilde{V}\in \tW:\delta_\Box\left(\widetilde{V},\RN(W)\right)\le \frac{1}{n}+\frac1k\right\}.
 \end{split}
 \end{equation}

In the natural coupling between $\mathbb{H}_{n,W}$ and $\mathbb{H}_{n,W_k}$ where we use the same ${{x}}\in [0,1]^n$, the edge weights on each pair differ at most by $\frac1k$ and thus the $d_\infty$-distance between the sampled graphons is at most~$\frac1k$ with probability~1.  Since $\delta_\Box\le d_\infty$ and the left-hand side of \eqref{eq:blabla2} has the $\tH_{n,W_k}$-measure $1$ by Step~(I), we conclude that the $\tH_{n,W}$-measure of the $(\frac1n+\frac2k)$-neighbourhood of $\RN(W)$ in $\delta_\Box$ is also 1. Now the proposition follows
by the $\sigma$-additivity of~$\tH_{n,W}$ since $k\in\I N$ was arbitrary.
\end{proof}

Now we can now resolve the LDP problem for $\tH_{n,W}$ for every speed.

\begin{proof}[Proof of Theorem~\ref{thm:GeneralSpeedsH}] It is more convenient to prove its parts in a different order than stated.

\ProofOf{it:GSH2}
Recall that we are given $W\in \C W$ and have to show that $(\tH_{n,W})_{n\in \I N}$ satisfies the LDP on $\left(\tW,\delta_\Box\right)$ with speed $s(n)=(c+o(1))n$ and rate function $\frac1c\,K_W$. In brief, the result will be proved by using a result of Eichelsbacher~\cite{Eichelsbacher97} (Theorem~\ref{th:E97} here) and the Contraction Principle.
It is enough to do the case when $c=1$ and $s(n)=n$.  

Recall that $\mathcal{P}([0,1])$ is the space of all Borel probability measures on $[0,1]$ and,
for $k\in \I N$, $\tau^k$ is the coarsest topology on $\mathcal{P}([0,1])$ such that,
for every bounded Borel function $a:[0,1]^k\to \mathbb{R}$,
 the map
\beq{eq:TauK}
 \mu\mapsto \int_{[0,1]^k} a \dd\mu^{\oplus k},\quad \mu\in \mathcal{P}([0,1]),
 \eeq
 is continuous.
Let $\tau^\infty$ be the  topology generated by $\bigcup_{k\in \mathbb{N}} \tau^k$, that is, 
the coarsest topology on $\mathcal{P}([0,1])$ such that the map in~\eqref{eq:TauK} is continuous  for all $k\in\I N$ and all Borel functions~$a$.

The topology $\tau^k$ will be useful to us since it makes various maps involving $k$-fold integrals
continuous. However, one has to be careful as some natural properties fail. For example, \cite[Exercise~7.3.18]{DemboZeitouni10ldta} shows that the map $\mathcal{P}([0,1])\to \mathcal{P}([0,1]^k)$ that sends a measure $\mu$ to its $k$-th power $\mu^{\oplus k}$ for $k\ge 2$ is not continuous with respect the $\tau^1$-topologies on these two spaces.

Let $q_n:[0,1]^{{n}}\to \mathcal{P}([0,1])$ be the map that assigns to $x\in [0,1]^{{n}}$ the measure
$$
 q_n(x):=\frac{1}{n}\sum_{i\in [n]} \delta_{x\ind{i}}.
$$

\begin{claim}\label{clapp:basic}
The map $q_n:[0,1]^{{n}}\to (\mathcal{P}([0,1]),\tau^\infty)$ is Borel.
\end{claim}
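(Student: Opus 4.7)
The plan is to verify Borel measurability of $q_n$ by reducing it to the pointwise criterion on the sub-basis defining $\tau^\infty$. By construction, $\tau^\infty$ is the coarsest topology making each map $\mu\mapsto \int_{[0,1]^k} a\dd\mu^{\oplus k}$ continuous for $k\in\I N$ and bounded Borel $a\colon[0,1]^k\to\I R$; hence the Borel $\sigma$-algebra of $(\mathcal{P}([0,1]),\tau^\infty)$ is generated by the sub-basic open sets
$$
\left\{\mu\in\mathcal{P}([0,1]) \,:\, \int_{[0,1]^k} a\dd\mu^{\oplus k}\in U\right\}
$$
as $k$, $a$ and open $U\subseteq\I R$ vary. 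It therefore suffices to show that for every $k\in\I N$ and every bounded Borel $a\colon[0,1]^k\to\I R$, the composition $x\mapsto \int_{[0,1]^k} a\dd(q_n(x))^{\oplus k}$ is a Borel function $[0,1]^n\to\I R$.

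The verification is then a direct computation. Since $q_n(x)=\frac{1}{n}\sum_{i\in[n]}\delta_{x_i}$ is a discrete probability measure on $[0,1]$, its $k$-th power (and the completion $(q_n(x))^{\oplus k}$, which agrees with it on Borel sets) equals $\frac{1}{n^k}\sum_{(i_1,\dots,i_k)\in[n]^k}\delta_{(x_{i_1},\dots,x_{i_k})}$; consequently
$$
\int_{[0,1]^k} a\dd(q_n(x))^{\oplus k}=\frac{1}{n^k}\sum_{(i_1,\dots,i_k)\in[n]^k} a(x_{i_1},\dots,x_{i_k}).
$$
For each fixed index tuple the summand is, as a function of $x$, the composition of the continuous coordinate projection $x\mapsto(x_{i_1},\dots,x_{i_k})$ with the Borel function $a$, so it is Borel; and a finite sum of real-valued Borel functions is Borel.

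The only point requiring a little care---not a genuine obstacle---is the reduction in the first paragraph. Because $\tau^\infty$ is in general not second countable, one cannot argue via a countable basis; instead one uses that Borel measurability of $q_n$ into the $\sigma$-algebra generated by the defining sub-basis is equivalent to Borel measurability of the compositions with the sub-basic functionals, which is what the second paragraph establishes. No deeper structural input is needed for the claim itself.
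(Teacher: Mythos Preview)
Your proof is correct and follows essentially the same approach as the paper's: both reduce to showing that the composition $x\mapsto\int_{[0,1]^k}a\,\dd(q_n(x))^{\oplus k}$ is Borel for each bounded Borel $a$, and then expand this integral as the finite sum $n^{-k}\sum_{(i_1,\dots,i_k)\in[n]^k}a(x_{i_1},\dots,x_{i_k})$ of Borel functions. Your final paragraph is more explicit than the paper about the subtlety that $\tau^\infty$ need not be second countable; the paper simply states that the sub-basic sets ``generate the topology'' and moves on.
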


\begin{proof}[Proof of Claim] It is enough to show that,  for every $k\in\I N$, $t\in\I R$, and every Borel function $a:[0,1]^k\to \I R$, the pre-image under $q_n$ of the set $\{\mu\in \mathcal{P}([0,1])  \mid \int a\mu^{\oplus k}<t\}$ is Borel since these sets generate the topology~$\tau^\infty$. The pre-image is exactly the set $\{x\in [0,1]^{{n}}\mid \phi(x)< t\}$, where $\phi(x)$ is the integral of $a$ under the $k$-th power of $q_n(x)=\frac{1}{n}\sum_{i\in [n]} \delta_{x\ind{i}}$. Note that $\phi$ is the sum of $n^{-k}a(x\ind{i_1},\dots,x\ind{i_k})$ over all $n^k$ possible choices of $i_1,\dots,i_k\in [n]$. This is a Borel function of~$x$. Thus its level set $\{\phi<t\}\subseteq [0,1]^{{n}}$ is also Borel, as desired. 
\end{proof}

Recall that $\lambda$ is the Lebesgues measure on $[0,1]$. Define 
$
 \Lambda_n:=(q_n)_*\,\lambda^{\oplus n}
 $
 to be the push forward of $\lambda^{\oplus n}$ via the map~$q_n$. By Claim~\ref{clapp:basic},  $\Lambda_n$ is a well-defined Borel probability measure on the topological space $(\mathcal{P}([0,1]),\tau^\infty)$.

Let $\mu\in \mathcal{P}([0,1])$. Define the \emph{Kullback–Leibler divergence} (or \emph{relative entropy}) of $\mu$ with respect to the Lebesgue measure $\lambda$ by
\beq{eq:H}
H(\mu|\lambda):=\left\{
\begin{array}{ll}
\int_{[0,1]} \L{\frac{\dd\mu}{\dd\lambda}}  \dd\lambda, & \mbox{if $\mu\ll \lambda$,}\\
\infty,& \mbox{otherwise.}
\end{array}
\right.
\eeq

The following result for $m=2$ was proved by Eichelsbacher~\cite[Theorem 1]{Eichelsbacher97} who remarked
(\cite[Page 911]{Eichelsbacher97})
that his result can be generalised to arbitrary~$m$. (A more general result can be found in Eichelsbacher and Schmock \cite[Theorems 1.7(c) and 1.10]{EichelsbacherSchmock02}: namely Theorem~\ref{th:E97} follows from the special case when the family $\Phi$ in~\cite{EichelsbacherSchmock02} is taken to be the set of all bounded measurable functions $[0,1]^m\to \I R$.)

\begin{theorem}[Eichelsbacher~\cite{Eichelsbacher97}] 
\label{th:E97}
Let $m\in\I N$ and let $\C Q^{\oplus  m}_n$ be the law of the $m$-th power
$(q_n(x_n))^{\oplus m}\in \mathcal{P}([0,1]^m)$ where $x_n$ is a uniform element of $[0,1]^{{n}}$. For $\nu\in \mathcal{P}([0,1]^m)$, if $\nu$ is the $m$-th power $\eta^{\oplus m}$ of some measure $\eta\in\mathcal{P}([0,1])$, then define 
$$
I(\nu):=H(\eta|\lambda);
$$
 otherwise we define $I(\nu):=\infty$. 

Then the sequence of measures $(\C Q^{\oplus  m}_n)_{n\in\I N}$ on the topological space $(\mathcal{P}([0,1]^m),\tau^1)$ satisfies an LDP with speed $n$ and rate function~$I$. Moreover, the rate function $I$ is \emph{good}, that is, 
for every $t\in [0,\infty)$ the level set $\{I\le t\}$ is $\tau^1$-compact.
\qqed
\end{theorem}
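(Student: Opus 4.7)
The plan is to deduce the theorem from Sanov's theorem by applying the contraction principle to the $m$-th power map $F(\mu):=\mu^{\oplus m}$, and then to upgrade the topology on the target from weak$^*$ to $\tau^1$ by establishing exponential tightness.

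By Sanov's theorem, the empirical measure $L_n:=q_n(x_n)$ satisfies an LDP on $(\mathcal{P}([0,1]),\mathrm{weak}^*)$ at speed $n$ with good rate $H(\cdot|\lambda)$. The map $F:\mathcal{P}([0,1])\to\mathcal{P}([0,1]^m)$ is weak$^*$-to-weak$^*$ continuous, since for any continuous $a:[0,1]^m\to\I R$, uniform approximation by linear combinations of products of continuous one-variable functions shows that $\mu\mapsto\int a\,\dd\mu^{\oplus m}$ is weak$^*$-continuous. Since the first marginal of $\mu^{\oplus m}$ equals $\mu$, the map $F$ is injective, so the contraction principle yields an LDP for $\C Q_n^{\oplus m}=\mathrm{Law}(F(L_n))$ on $(\mathcal{P}([0,1]^m),\mathrm{weak}^*)$ at speed $n$ with rate function $I$ exactly as defined in the statement.

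To upgrade the LDP to the finer topology $\tau^1$ on $\mathcal{P}([0,1]^m)$, I would establish exponential tightness of $(\C Q_n^{\oplus m})_{n\in\I N}$ in $\tau^1$; the LDP in $\tau^1$ with the same rate function then follows from the standard principle that a weak LDP combined with exponential tightness in a finer topology gives a full LDP in that finer topology (see e.g.\ \DZ{Corollary~4.2.6}). For each $\alpha\ge 0$ set $K_\alpha:=F(\{\mu\in\mathcal{P}([0,1]):H(\mu|\lambda)\le\alpha\})\subseteq\mathcal{P}([0,1]^m)$. The $\tau^1$-form of Sanov gives $\limsup_{n\to\infty}\frac1n\log\I P(H(L_n|\lambda)>\alpha)\le-\alpha$; since $\{L_n^{\oplus m}\notin K_\alpha\}\subseteq\{H(L_n|\lambda)>\alpha\}$, this reduces the exponential tightness to showing that each $K_\alpha$ is $\tau^1$-compact. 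The latter reduces to the continuity lemma that the restriction of $F$ to the $\tau^1$-compact level set $\{H\le\alpha\}$ is $\tau^1$-to-$\tau^1$ continuous, which will also make $I$ good on $(\mathcal{P}([0,1]^m),\tau^1)$, because $\{I\le\alpha\}=K_\alpha$.

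To prove the continuity lemma I would use uniform integrability. By the de la Vallée-Poussin theorem (Theorem~\ref{APP:VP}) applied to the convex superlinear function $t\mapsto\L{t}+1/\me$, the family of densities $\{\dd\mu/\dd\lambda:H(\mu|\lambda)\le\alpha\}$ is uniformly integrable. Suppose $\mu_n\to\mu$ in $\tau^1$ with $\sup_n H(\mu_n|\lambda)\le\alpha$; for any bounded Borel $a:[0,1]^m\to\I R$, Fubini gives $\int a\,\dd\mu_n^{\oplus m}=\int g_n(y)\,\dd\mu_n(y)$ where $g_n(y):=\int a(x_1,\ldots,x_{m-1},y)\,\dd\mu_n^{\oplus(m-1)}(x_1,\ldots,x_{m-1})$, and by induction on $m$ one has $g_n(y)\to g(y):=\int a(\cdot,y)\,\dd\mu^{\oplus(m-1)}$ pointwise in $y$. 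The decomposition $\int g_n\,\dd\mu_n=\int g\,\dd\mu_n+\int(g_n-g)\,\dd\mu_n$ then yields convergence: the first term tends to $\int g\,\dd\mu$ by $\tau^1$-convergence applied to the bounded Borel function $g$, while the second is handled by splitting the domain at a truncation level $\{\dd\mu_n/\dd\lambda\le N\}$ and using uniform integrability to send $N\to\infty$ after $n\to\infty$. The main obstacle is precisely this continuity lemma, because general bounded Borel functions on $[0,1]^m$ admit no uniform approximation by products of one-variable functions; the limit therefore has to be passed slot-by-slot, leaning on uniform integrability supplied by bounded relative entropy.
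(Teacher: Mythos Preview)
The paper does not give its own proof of this theorem; it is cited as a black box from Eichelsbacher~\cite{Eichelsbacher97} (the case $m=2$, with a remark that general $m$ is analogous) and Eichelsbacher--Schmock~\cite{EichelsbacherSchmock02}. So there is nothing in the paper to compare against, only your attempt to supply a proof.

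Your exponential-tightness step has a fatal gap. The empirical measure $L_n=\tfrac1n\sum_{i=1}^n\delta_{x_i}$ is purely atomic, so $H(L_n\mid\lambda)=\infty$ almost surely for every~$n$. Since $F$ is injective, $L_n^{\oplus m}\in K_\alpha=F(\{H\le\alpha\})$ would force $H(L_n\mid\lambda)\le\alpha$, which never happens; hence $\I P\bigl(L_n^{\oplus m}\notin K_\alpha\bigr)=1$ for all $n$ and $\alpha$, and your asserted inequality $\limsup_n\tfrac1n\log\I P\bigl(H(L_n\mid\lambda)>\alpha\bigr)\le-\alpha$ reads $0\le-\alpha$. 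This is not a repairable choice of compact sets: already for $m=1$, the $\tau$-topology Sanov theorem (\DZ{Theorem~6.2.10}) is \emph{not} proved via exponential tightness, precisely because the discrete empirical measures lie outside every $\tau$-compact subset of $\{\mu\ll\lambda\}$; the proof instead goes through the Dawson--G\"artner projective limit over finite measurable partitions. Eichelsbacher's argument adapts that projective-limit machinery to $U$-statistics and product empirical measures directly, rather than contracting from one-dimensional Sanov and then upgrading via \DZ{Corollary~4.2.6}. Your weak$^*$ contraction step is correct and identifies the rate function there, but the passage to $\tau^1$ needs a genuinely different mechanism. (A smaller secondary issue: your ``continuity lemma'' only establishes \emph{sequential} $\tau^1$-continuity of $F$ on $\{H\le\alpha\}$, and $\tau^1$ is not metrizable, so concluding that $K_\alpha$ is $\tau^1$-compact---needed for goodness of $I$---would still require an Eberlein--\v Smulian-type argument; but this is moot given the primary gap.)
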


Let us show that, for every $m\in\I N$, the set $\mathcal{P}_{\mathrm{power}}([0,1]^m)$ of measures $\mu\in \mathcal{P}([0,1]^m)$ such that $\mu=\nu^{\oplus m}$ for some $\nu\in \mathcal{P}([0,1])$ is closed in the $\tau^1$-topology. 
By Dynkin's theorem (e.g.\ \cite[Corollary 1.6.3]{Cohn13mt}), this set can be equivalently described as the set of those $\mu\in \mathcal{P}([0,1]^m)$ such that for all measurable sets $A_1,\dots,A_m\subseteq [0,1]$ it holds that
 \beq{eq:ProdMeasure}
 \mu(A_1\times\dots\times A_m)=\prod_{i=1}^m \mu\left([0,1]^{i-1}\times A_i\times [0,1]^{m-i}\right).
 \eeq
 So take any measure $\mu$ which is not in this set and fix some $A_1,\dots,A_m\subseteq [0,1]$ violating~\eqref{eq:ProdMeasure}. Then there is $\e>0$ such that~\eqref{eq:ProdMeasure} is still violated if we modify each value of $\mu$ by less than~$\e$. Thus if we take the indicator functions of the $m+1$ sets appearing in~\eqref{eq:ProdMeasure} and require that the $\nu$-integral of each differs from the corresponding value of $\mu^{\oplus m}$ by less than $\e$, then we obtain a $\tau^1$-open set of measures $\nu$, which contains $\mu$ and is disjoint from $\mathcal{P}_{\mathrm{power}}([0,1]^m)$. Hence, the latter set is indeed a $\tau^1$-closed.

Since for each integer $n\in\I N$ the support of the measure $\C Q^{\oplus  m}_n$ lies inside the closed set $\mathcal{P}_{\mathrm{power}}([0,1]^m)$, the LDP of Theorem~\ref{th:E97} can be restricted to this set, with each level set of the rate function $I$ still being compact. On the the other hand, the map $\nu\mapsto \nu^{\oplus m}$ gives a homeomorphism between the topological spaces $(\mathcal{P}([0,1]),\tau^m)$ and $(\mathcal{P}_{\mathrm{power}}([0,1]^m),\tau^1)$. Also, under this map, the push-forward of $\Lambda_n$ is precisely~$\C Q^{\oplus  m}_n$ while $I$ corresponds to~$H(\_|\lambda)$. Thus,  for every $m\in \I N$, the sequence $(\Lambda_n)_{n\in \mathbb{N}}$ on $\left(\mathcal{P}([0,1]),\tau^m\right)$ satisfies an LDP with speed $n$ and rate function $H(\_|\lambda)$. Since $\tau^\infty$ is the coarsest topology generated by all $\tau^m$, we obtain the following claim by the Dawson–G\"artner theorem
(\cite{DawsonGartner87}, or see e.g.\ \cite[Theorem~4.6.1]{DemboZeitouni10ldta}).

\begin{claim}\label{app:eichelsbacher Schmock}
The sequence $(\Lambda_n)_{n\in \mathbb{N}}$ satisfies an LDP on $\left(\mathcal{P}([0,1]),\tau^\infty\right)$ with speed $n$ and rate function $H(\_|\lambda)$.\qed
\end{claim}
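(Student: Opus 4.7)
The plan is to assemble Claim~\ref{app:eichelsbacher Schmock} from two ingredients that are already in hand: (i) for each fixed $m\in\I N$, an LDP for $(\Lambda_n)_{n\in\I N}$ on the coarser space $(\mathcal{P}([0,1]),\tau^m)$ with speed $n$ and rate $H(\,\cdot\,|\lambda)$, obtained by transporting Theorem~\ref{th:E97} across the homeomorphism $\nu\mapsto\nu^{\oplus m}$ between $(\mathcal{P}([0,1]),\tau^m)$ and the closed subset $(\mathcal{P}_{\mathrm{power}}([0,1]^m),\tau^1)$; and (ii) the Dawson--G\"artner theorem for projective limits of LDPs. Concretely, I would invoke \cite[Theorem~4.6.1]{DemboZeitouni10ldta}, which says that if a sequence of Borel probability measures satisfies an LDP on each member of an increasing family of Hausdorff topologies with a common good rate function, then it satisfies the LDP on the topology generated by their union with the same rate function.

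Before applying the theorem, I would check three routine prerequisites. First, that $(\tau^m)_{m\in\I N}$ is an increasing chain: given any bounded Borel $a\colon[0,1]^k\to\I R$, the function $\tilde a(x_1,\dots,x_{k+1}):=a(x_1,\dots,x_k)$ is bounded Borel on $[0,1]^{k+1}$ and $\int \tilde a\,\dd\mu^{\oplus(k+1)}=\int a\,\dd\mu^{\oplus k}$, so every $\tau^k$-continuous functional is $\tau^{k+1}$-continuous, giving $\tau^k\subseteq \tau^{k+1}$. Second, that each $\tau^m$ is Hausdorff: already the $\tau^1$-continuous functionals $\mu\mapsto\mu(A)$ for Borel $A\subseteq[0,1]$ separate points of $\mathcal{P}([0,1])$. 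Third, that $H(\,\cdot\,|\lambda)$ is good in every $\tau^m$: Theorem~\ref{th:E97} gives that the rate function $I$ is good on $(\mathcal{P}([0,1]^m),\tau^1)$, and the sublevel set $\{H(\,\cdot\,|\lambda)\le t\}$ is the image under the homeomorphism $\nu\mapsto\nu^{\oplus m}$ of $\{I\le t\}\cap \mathcal{P}_{\mathrm{power}}([0,1]^m)$, which is compact as a closed subset of a compact set.

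With these verifications in place, the Dawson--G\"artner theorem directly yields the LDP on $(\mathcal{P}([0,1]),\tau^\infty)$ at speed $n$ with rate function $H(\,\cdot\,|\lambda)$. I do not expect a real obstacle at this stage: the genuine analytic content (empirical measure LDP on $\tau^1$, and closure of $\mathcal{P}_{\mathrm{power}}([0,1]^m)$ in $\tau^1$) has already been extracted in the build-up to the claim, and what remains is mechanical assembly. The only point that warrants a second glance is the \emph{goodness} of the rate function — without it one could only conclude a weak LDP in each $\tau^m$ and Dawson--G\"artner would not combine them — but goodness is transferred cleanly through the homeomorphism, so this is not actually a difficulty.
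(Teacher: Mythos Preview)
Your proposal is correct and matches the paper's own argument essentially verbatim: transport Theorem~\ref{th:E97} through the homeomorphism $\nu\mapsto\nu^{\oplus m}$ to get an LDP on each $(\mathcal{P}([0,1]),\tau^m)$ with good rate $H(\,\cdot\,|\lambda)$, then apply Dawson--G\"artner to pass to $\tau^\infty$. The paper is somewhat terser about the prerequisite checks (increasing chain, Hausdorff, goodness), but the logical structure is identical.
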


Fix a representation of $W:[0,1]^2\to [0,1]$ as a symmetric Borel function that satisfies $W(x,x)=0$ for all $x\in [0,1]$. We remark that the requirement to have a value 0 on the diagonal corresponds to the fact  that weighted graphs in our definition do not have loops. Next, we extend the definition of $W_\mu$ to an arbitrary $\mu\in \mathcal{P}([0,1])$ by taking some measure-preserving map $\phi:([0,1],\lambda)\to ([0,1],\mu)$ and defining $W_\mu(x,y):=W(\phi(x),\phi(y))$ for $(x,y)\in [0,1]^2$.
Note that since $\mu$ can have atoms we need to have $W$ fixed as an everywhere defined Borel function (and not as a measurable function defined a.e.). 
Write $\widetilde{W}_\mu$ for the equivalence class of $W_\mu$ in $\tW$.

Define a map $\Delta_W:\mathcal{P}([0,1])\to \tW$ by $\mu\mapsto \widetilde{W}_\mu$.
Even though the definition of $\Delta_W$ depends on the fixed representative of $W$ as a Borel function, it can be easily verified that, irrespective of this representation, the push-forwards of the Lebesgue measure $\lambda^{\oplus n}$ to the space~$\tW$ along the maps in the diagram below are the same. Also, the following properties do not depend on the choice of a representative.

\begin{claim}
The map $\Delta_W$ is $(\tau^\infty,\delta_\Box)$-continuous.
\end{claim}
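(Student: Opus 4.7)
The plan is to reduce continuity of $\Delta_W$ to continuity of the scalar maps $\mu\mapsto t(F,W_\mu)$ for each finite graph $F$, and then recognise each such map as one of the defining continuous functionals of $\tau^\infty$. The key input is Theorem~\ref{th:ConvOnW}, which says the homomorphism densities $\{t(F,\_):F\text{ a finite graph}\}$ generate the $\delta_\Box$-topology on $\tW$. Since $(\tW,\delta_\Box)$ is compact Hausdorff and the initial topology induced by the (continuous) maps $t(F,\_)$ is Hausdorff (the densities separate weakly isomorphic classes) and weaker than $\delta_\Box$, the two topologies coincide. Consequently, to prove that $\Delta_W$ is $(\tau^\infty,\delta_\Box)$-continuous it suffices to check that, for every finite graph $F$, the composition $\mu\mapsto t(F,W_\mu)$ is $\tau^\infty$-continuous.

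First I would unpack $t(F,W_\mu)$ using the definition $W_\mu(x,y)=W(\phi(x),\phi(y))$ with $\phi:([0,1],\lambda)\to([0,1],\mu)$ measure-preserving. Fixing $V(F)=[k]$ and applying the change of variables given by $\phi^{\oplus k}:([0,1]^k,\lambda^{\oplus k})\to([0,1]^k,\mu^{\oplus k})$, I get
\[
 t(F,W_\mu)\;=\;\int_{[0,1]^k}\prod_{\{i,j\}\in E(F)} W(y_i,y_j)\,\dd\mu^{\oplus k}(y)\;=\;\int_{[0,1]^k} a_F\,\dd\mu^{\oplus k},
\]
where $a_F(y):=\prod_{\{i,j\}\in E(F)} W(y_i,y_j)$. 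Because we have fixed $W$ as an everywhere-defined bounded Borel function, $a_F:[0,1]^k\to[0,1]$ is bounded Borel.

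By the very definition of the topology $\tau^k$ (the coarsest topology making $\mu\mapsto\int a\,\dd\mu^{\oplus k}$ continuous for every bounded Borel $a:[0,1]^k\to\I R$), the map $\mu\mapsto\int a_F\,\dd\mu^{\oplus k}$ is $\tau^k$-continuous, hence $\tau^\infty$-continuous since $\tau^\infty$ refines every $\tau^k$. As this holds for all finite graphs $F$, the first paragraph gives continuity of $\Delta_W$.

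The only slightly delicate point is the identification of the $\delta_\Box$-topology on $\tW$ with the initial topology from the homomorphism densities; this is where I invoke Theorem~\ref{th:ConvOnW} together with compactness of $(\tW,\delta_\Box)$ (Lovász--Szegedy). Everything else is book-keeping: the change-of-variables formula for a measure-preserving map, and the observation that the integrand $a_F$ is a bounded Borel function on a finite power of $[0,1]$, which is exactly the class of test functions for $\tau^\infty$.
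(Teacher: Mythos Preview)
Your proof is correct and follows the same approach as the paper's: reduce to showing that each composition $\mu\mapsto t(F,W_\mu)$ is $\tau^\infty$-continuous, then identify this as $\mu\mapsto\int_{[0,1]^k} a_F\,\dd\mu^{\oplus k}$ for a bounded Borel $a_F$, which is $\tau^k$-continuous by definition. Your extra paragraph justifying the reduction (via compactness of $(\tW,\delta_\Box)$ and the fact that the densities separate points, so the initial topology from the $t(F,\cdot)$ coincides with $\delta_\Box$) makes explicit what the paper leaves as a one-line invocation of Theorem~\ref{th:ConvOnW}.
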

\begin{proof}
 By Theorem~\ref{th:ConvOnW}, it is enough to show that, for any fixed graph $F=([k],E(F))$, the real-valued function $t(F,\Delta_W(\_)):\mathcal{P}([0,1])\to\I R$ is $\tau^\infty$-continuous. This function
sends a measure $\mu\in \mathcal{P}([0,1])$ to $t(F,W_\mu)=\int_{[0,1]^k} a(x) \dd\mu^{\oplus k}(x)$, where 
$a(x):=\prod_{\{i,j\}\in E(F)} W(x\ind{i},x\ind{j})$ for $x\in [0,1]^k$.  It is, in fact, $\tau^k$-continuous by definition, since $a$ is a Borel function.
The claim is proved.
\end{proof}

For $n\in \I N$, consider now the map $q_{n,W}:[0,1]^n\to\W$  that is defined by 
$$q_{n,W}(x):=f^{x,W},\quad \mbox{$x\in [0,1]^n$},$$
where $f^{x,W}$ is the step graphon that is obtained from the weighted graph $H$ on $[n]$  that corresponds to the $x$-sample from $W$, i.e.,  $w_H(i,j)=W(x\ind{i},x\ind{j})$.
Note that since $W$ is a fixed Borel function the map $q_{n,W}$ is well-defined.

It follows easily from the definitions that the diagram
\[
  \begin{tikzcd}
      \left[0,1\right]^{{n}} \arrow{r}{q_{n,W}} \arrow{d}{q_n} & \mathcal{W} \arrow[two heads]{d}  \\
     \mathcal{P}([0,1]) \arrow{r}{\Delta_W} & \widetilde{\mathcal{W}}
  \end{tikzcd}
\]
is commutative.
In particular, for every $n\in \I N$, the measure $\tH_{n,W}$ is the push-forward of $\Lambda_n$ via~$\Delta_W$. (Recall that $\Lambda_n$ is the push-forward of $\lambda^{\oplus n}$ via $q_n$.)

By Claim~\ref{app:eichelsbacher Schmock}, the sequence $(\Lambda_n)_{n\in \I N}$ satisfies LDP on $(\mathcal{P}([0,1]),\tau^\infty)$ with speed $n$ and good rate function $H(\_|\lambda)$. By the Contraction Principle 
(see e.g.\ \cite{DemboZeitouni10ldta}*{Theorem~4.2.1}),
the sequence $(\tH_{n,W})_{n\in\I N}$ of measures on $(\tW,\delta_\Box)$ satisfies an LPD with speed $n$ and rate $K_W$, finishing the proof of Part~\ref{it:GSH2}


\medskip Let us turn to the remaining parts of Theorem~\ref{thm:GeneralSpeedsH}. 

\ProofOf{it:GSH1} Recall that the claimed rate function is $\chi_W$. Let $\mathcal{O}\subseteq \tW$ be an open set.
Suppose that the equivalence class $\widetilde{W}$ of $W$ belongs to $\mathcal{O}$ for otherwise $\inf_{\mathcal{O}}\chi_W=\infty$ and the LDP lower bound \eqref{eq:lowerGen} vacuously holds. Since the cut-distance between the graphon $W$ and its $n$-vertex sample $G\sim \tH_{n,W}$ tends to 0 with probability $1-o(1)$ as $n\to\infty$ (by, for example, \cite[Lemma~10.16]{Lovasz:lngl}), the probability that $\f{H}\in \mathcal {O}$ tends to 1 and the LDP lower bound \eqref{eq:lowerGen} trivially holds. (Recall that  we assume that $s(n)\to\infty$ as $n\to\infty$.)

Next, let $F\subseteq \tW$ be a closed set.
Suppose that $\widetilde{W}\not\in F$ for otherwise the LDP upper bound is vacuously true.  By Part~\ref{it:GSH2} applied to the closed set $F$ with speed $n$,  we have that 
\begin{equation*}
  \limsup_{n\to \infty}\frac{1}{n} \log\left(\tH_{n,W}(F)\right)\le  -\inf_{\widetilde{U}\in F} K_W(U). 
\end{equation*}
 Observe that the right-hand side is strictly less than $0$. Indeed, 
 since the function $K_W$ is lower semi-continuous by  Proposition~\ref{pr:K}\ref{it:KLsc}, it attains its infimum over the compact set $F$ at some $\widetilde{V}\in F$. 
By Proposition~\ref{pr:K}\ref{it:KZero}, we have $K_W(V)>0$. Thus $\inf_{\widetilde{U}\in F} K_W(U)= K_W(V)>0$.

Consequently, by $s(n)=o(n)$, we have 
\begin{equation*}
\begin{split}
 \limsup_{n\to \infty}\frac{1}{s(n)} \log\left(\tH_{n,W}(F)\right) =-\infty =
-\inf_{\widetilde{U}\in F} \chi_W(U)
\end{split}
\end{equation*}
and that shows that the LDP upper bound \eqref{eq:upperGen} holds as well.

\ProofOf{it:GSH3} Here $s(n)/n\to\infty$ and the stated rate is $\chi_{\RN(W)}$.  Let $\mathcal{O}\subseteq \tW$ be an open set. Suppose that $\mathcal{O}\cap \RN(W)\not =\emptyset$ as otherwise
the LDP lower bound \eqref{eq:lowerGen} vacuously holds. By the already proved Part~\ref{it:GSH2} (for speed $n$) we have
\begin{equation*}
\begin{split}
\liminf_{n\to \infty}\frac{1}{n} \log\left(\tH_{n,W}(\mathcal{O})\right)\ge -\inf_{\widetilde{U}\in \mathcal{O}} K_W(U).
\end{split}
\end{equation*}
Recall that $\RN(W)$ is the closure of the set of graphons $W_\mu$ over all Borel probability measures  $\mu\ll \lambda$; thus we can find such $\mu$ so that $\widetilde{W_\mu}$ belongs to the open set~$\mathcal{O}$. Pick $r>0$ such that the $\delta_\Box$-ball of  radius $r$ around $\widetilde{W_\mu}$ lies entirely inside~$\mathcal{O}$. By the integrability of $\frac{\dd\mu}{\dd\lambda}$ with respect to $\lambda$, there is finite $R$ such that the $\lambda$-measure of the level set $X:=\{\frac{\dd\mu}{\dd\lambda}\le R\}$ is at least $1-r/2$. Let the probability measure $\nu$ be zero on $[0,1]\setminus X$ and be the restriction of $\mu$ to $X$ scaled up by factor $1/\mu(X)$. 
Since the measures $\mu^{\oplus2}$ and $(\frac1{\mu(X)}\,\nu)^{\oplus2}$ coincide on a set $X^2$ of measure 
at least $(1-r/2)^2>1-r$,
we have $\delta_\Box(W_\mu,W_\nu)<r$. Thus $W_\nu$ still belongs to the open set $\mathcal{O}$. Thus $\inf_{\mathcal{O}} K_W\le K_W(W_\nu)\le \L{R}<\infty$. Consequently, since $s(n)/n\to\infty$, we have
\begin{equation*}
\begin{split}
 \liminf_{n\to \infty}\frac{1}{s(n)} \log\left(\tH_{n,W}(\mathcal{O})\right) = 0 = -\inf_{\widetilde{U}\in \mathcal{O}} \chi_{\RN(W)}(U).
\end{split}
\end{equation*}
This establishes the LDP lower bound \eqref{eq:lowerGen}.

Suppose now that $F\subseteq \tW$ is a closed set. Assume that $F\cap \RN(W)=\emptyset$ as otherwise the LDP upper bound \eqref{eq:upperGen} vacuously holds. Thus, by the compactness of $\tW$, we can find $n_0\in \I N$ such that
$$F\cap \left\{\widetilde{V}\in \tW:\delta_\Box(\widetilde{V},\RN(W))\le \frac{1}{n_0}\right\}=\emptyset.$$
Consequently, $\tH_{n,W}(F)=0$ holds for every $n\ge n_0$ by Proposition~\ref{pr:concentration 1/n}, and the upper LDP bound \eqref{eq:upperGen} follows.

\ProofOf{it:GSHConst} Here $W$ is a constant graphon, say of value~$p$. Trivially, $\RN(W)=\{W\}$, $K_W=\chi_W=\chi_{\RN(W)}$, and $\tH_{n,W}$ is the Dirac point mass on $\T W_n$, where the graphon $W_n$ is $p$ everywhere, except it is $0$ on $\cup_{i=1}^n (I_n^i\times I_n^i)$. (Recall that weighted graphs do not have loops.) Since the graphons $W_n$ converge to $W$, the LPD claimed in Part~\ref{it:GSHConst} holds.

\ProofOf{it:GSH4} It remains to show that there is no LDP if $W$ is not constant and $s(n)$ does not satisfy any of the assumptions in Parts \ref{it:GSH1}--\ref{it:GSH3} of Theorem~\ref{thm:GeneralSpeedsH}. Pick two subsequences of $n$ such that the speeds $s(n)$ satisfy two different assumptions from \ref{it:GSH1}--\ref{it:GSH3}. These two subsequences satisfy the corresponding LDPs by what we have just proved. It is enough to show that the two corresponding  rate functions differ since the (lower semi-continuous) LDP rate function on a regular topological space is unique (see e.g.,~\RS{Theorem 2.13}).

If at least one of the occurring cases is from Part~\ref{it:GSH2} then, by Proposition~\ref{pr:K}\ref{it:KNonConst}, the rate function assumes a finite positive value and thus determines $\lim_{n\to\infty} s(n)/n$ (and is also different from the $\{0,\infty\}$-valued functions $\chi_W$ and $\chi_{\RN(W)}$), giving the required. Otherwise, we have exactly the two cases of Parts~\ref{it:GSH1} and \ref{it:GSH3} occurring, one rate function $\chi_W$ is $0$ on a single point $\widetilde{W}$ while the other rate function vanishes on at least two different points by Proposition~\ref{pr:K}\ref{it:KNonConst}.
\end{proof}

\section{Exponential equivalence}\label{sec:P vs H}

The following notion of \emph{exponential equivalence} is useful for transferring an LDP proved for one sequence of measures to another (see e.g.\ \cite{DemboZeitouni10ldta}*{Theorem~4.2.13})
We use the standard definition (as in~\cite[Definition~4.2.10]{DemboZeitouni10ldta}), 
which simplifies slightly when restricted to the case of separable metric spaces.

\begin{definition}[Exponential equivalence]\label{App:DefExpEq}
Let $(\mu_n)_{n\in \I N}$ and $(\nu_n)_{n\in \I N}$ be sequences of Borel probability measures on  a separable metric space $(X,d)$. Let $s:\I N\to(0,\infty)$ satisfy $s(n)\to\infty$ as $n\to\infty$.
We say that $(\mu_n)_{n\in \I N}$ and $(\nu_n)_{n\in \I N}$ are \emph{exponentially equivalent for speed $s$} if there is a sequence of Borel probability measure $(\Theta_n)_{n\in \I N}$ on $X\times X$ such that for every $n\in \I N$ the marginals of $\Theta_n$ are $\mu_n$ and $\nu_n$, and for every $\alpha>0$ it holds that
$$
\limsup_{n\to\infty}\frac{1}{s(n)}\log\left(\Theta_n\left(\{(x,y)\in X\times X:d(x,y)>\alpha\}\right)\right)=-\infty.
$$
\end{definition}

Informally speaking, the definition states that there is a coupling such that the probability of seeing a pair of outcomes at distance bounded from 0 decays faster than $\me^{-O(s)}$.

\hide{
This notion is useful because of the following result.

\begin{theorem}[See e.g.\ \cite{DemboZeitouni10ldta}*{Theorem~4.2.13}]
\label{App:ExpEq}
Let $(\mu_n)_{n\in \I N}$ and $(\nu_n)_{n\in \I N}$ be two  sequences of Borel probability measures on  a separable metric space $(X,d)$ that are exponentially equivalent for some speed $s(n)\to\infty$. 
Let $I:X\to [0,\infty]$ be a good rate function.
Then $(\mu_n)_{n\in \mathbb{N}}$ satisfies an  LDP on $X$ with speed $s$ and rate $I$ if and only if $(\nu_n)_{n\in \mathbb{N}}$ satisfies an LDP on $X$ with speed $s$ and rate~$I$.\qqed
\end{theorem}
}

Let us show that,  for any speed~$o(n^2)$, the $n$-vertex samples from a graphon, one with edge rounding and the other without edge rounding, are exponentially equivalent.

\begin{lemma}\label{lm:ExpEq}
Let $W\in\C W$. Take any function $s:\I N\to(0,\infty)$  such that $s(n)\to\infty$ and $s(n)/n^2 \to0$ as $n\to\infty$.
Then the sequences $(\tR_{n,W})_{n\in\I N}$ and $(\tH_{n,W})_{n\in\I N}$ are exponentially equivalent for speed~$s$.
\end{lemma}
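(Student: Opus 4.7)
I would build the coupling $\Theta_n$ by the natural ``same-sample'' construction: draw $x_1,\dots,x_n\in[0,1]$ uniformly, use these $x_i$'s to form the edge-weighted graph $H$ on $[n]$ with weights $w_H(i,j)=W(x_i,x_j)$, and then, conditionally on the $x_i$'s and independently over pairs, round each edge to obtain a graph $G$ with $\mathbb{P}(\{i,j\}\in E(G)\mid x)=W(x_i,x_j)$. The joint law of $(\tf{G},\tf{H})$ under this coupling has marginals $\tR_{n,W}$ and $\tH_{n,W}$, so what remains is to show that under $\Theta_n$ the cut distance $\delta_\Box(\tf{G},\tf{H})$ is concentrated at $0$ on a scale strictly faster than $\me^{-s(n)}$.

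The key reduction is that both $\f{G}$ and $\f{H}$ are step graphons on the common partition $(I^n_1,\dots,I^n_n)$, vanishing on the diagonal cells because (weighted or unweighted) graphs in this paper have no loops. For such pairs the supremum in the definition~\eqref{eq:CutNorm} of $d_\Box$ is attained on $A,B\subseteq[0,1]$ that are unions of these $n$ intervals: fixing $B$, the function $x\mapsto \int_B(\f{G}-\f{H})(x,y)\dd y$ is constant on each $I^n_i$, so one maximises over measurable $A$ by including or excluding each cell according to the sign of that constant, and symmetrically for $B$. This reduces the cut norm to
\[
d_\Box(\f{G},\f{H})\;=\;\frac1{n^2}\,\max_{A,B\subseteq[n]}\biggl|\sum_{i\in A,\,j\in B}X_{ij}\biggr|,\qquad X_{ij}:=\mathbf{1}\{\{i,j\}\in E(G)\}-W(x_i,x_j)
\]
for $i\ne j$, with $X_{ii}=0$.

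Conditionally on $(x_1,\dots,x_n)$ the family $\{X_{ij}\}_{i<j}$ is independent, centred, and bounded in $[-1,1]$, so for any fixed $(A,B)$ Hoeffding's inequality will supply a conditional tail bound of the form $2\exp(-c\alpha^2 n^2)$ for some absolute $c>0$ (the symmetry $X_{ij}=X_{ji}$ only inflates the per-variable bound by a constant). A union bound over the at most $4^n$ choices of $(A,B)$, followed by integration in $x$ and the trivial inequality $\delta_\Box\le d_\Box$, then yields
\[
\Theta_n\!\left(\{\delta_\Box(\tf{G},\tf{H})>\alpha\}\right)\;\le\;\exp\!\bigl(-c\alpha^2 n^2+O(n)\bigr).
\]
Dividing by $s(n)$ and taking $n\to\infty$, the hypothesis $s(n)=o(n^2)$ forces the right-hand side to $-\infty$, which is the exponential equivalence condition of Definition~\ref{App:DefExpEq}.

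The only mildly delicate step is the cut-norm reduction to sums indexed by step-unions; once that is in place the rest is a routine Hoeffding-plus-union-bound estimate, and the gap between the concentration exponent $\Theta(n^2)$ and the allowed speed $s(n)=o(n^2)$ is comfortable enough that no sharper tool is needed.
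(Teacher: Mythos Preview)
Your argument is correct and takes a genuinely different route from the paper. You couple the two samples in the same natural way, but then bound the cut distance \emph{directly}: reduce the supremum in $d_\Box(\f G,\f H)$ to a maximum over step sets, apply Hoeffding to each of the $4^n$ bilinear sums of the centred independent edge variables, and union bound. This yields the $\exp(-c\alpha^2 n^2+O(n))$ tail immediately.

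The paper instead passes through homomorphism densities: it proves (Claim~\ref{cl:Azuma graphon}) via an edge-exposure martingale and Azuma's inequality that each $t(F,\f G)$ is concentrated around $t(F,\f H)$ at scale $\me^{-\Theta(n^2)}$, and then invokes the Inverse Counting Lemma to convert control of finitely many $t(F,\cdot)$ into control of $\delta_\Box$. Your approach is more self-contained---it avoids the Inverse Counting Lemma entirely and uses only Hoeffding plus a simple combinatorial reduction---at the cost of being specific to step graphons on a common partition (which is exactly the situation here). The paper's route has the mild advantage that the Azuma step works uniformly in $H$ without any union bound over cuts, but since the $4^n$ factor is swallowed by the $n^2$ exponent anyway, this makes no practical difference for the lemma at hand.
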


\begin{proof}
Let $n\in \I N$ and $H\in \mathcal{H}_n$.
Recall that $w_H$ denotes the edge-weight function of~$H$. Let $\Lambda_{n,H}$ be the atomic probability measure on $\W$ which, for every graph $G$ on $[n]$, gives mass
\begin{equation}\label{eq:measure}
 \Lambda_{n,H}(\{\f{G}\}):=\prod_{\{i,j\}\in E(G)} w_H(i,j)\prod_{\{i',j'\}\in E(\O G)} \left(1-w_H(i',j')\right)
\end{equation}
 to $\f{G}$. This is exactly the distribution after edge-rounding the weighted graph $H$ (and taking the corresponding graphon in~$\W$).
It is easy to see that,  for every $n\in \I N$, the assignment $(H,G)\mapsto \Lambda_{n,H}\left(\left\{\f{G}\right\}\right)$ is Borel measurable.
This allows us to define a Borel probability measure $\Lambda_{n,W}$ on $\W\times \W$ by
$$
\Lambda_{n,W}(A):=\int \Lambda_{n,H}\left(\left\{\f{G}:G\in\mathcal{G}_n,\ \left(\f{H},\f{G}\right)\in A\right\}\right)  \dd\mathbb{H}_{n,W}(\f{H}),
$$
where $A\subseteq \W\times \W$ is a Borel set.
Informally speaking, the first component is the weighted $n$-vertex sample $H$ from $W$ while the second component is an edge rounding of~$H$ (or, more precisely, we take the graphons corresponding to these graphs).
Define the probability measure $\Theta_{n,W}$ as the push-forward of $\Lambda_{n,W}$ via $\W\times \W\twoheadrightarrow\tW\times \tW$, the product of the quotient maps.
One can easily verify that the marginals of $\Theta_{n,W}$ are $\tH_{n,W}$ and~$\tR_{n,W}$.

Let us show that the coupling given by $\Theta_{n,W}$ satisfies Definition~\ref{App:DefExpEq}.
Take any $\alpha>0$.
We set
$$\mathcal{O}_\alpha:=\{(\widetilde{U},\widetilde{V})\in \tW\times \tW: \delta_\Box(\widetilde{U},\widetilde{V})>\alpha\}$$
and 
$$\mathcal{Q}_\alpha:=\{(U,V)\in \W\times \W: \delta_\Box(U,V)>\alpha\}.$$
Clearly, we have $\Theta_{n,W}(\mathcal{O}_\alpha)=\Lambda_{n,W}(\mathcal{Q}_\alpha)$. Our aim is to show that this is $\me^{-\omega(s(n))}$ as $n\to\infty$.

We need the following result. Recall that the measure $\Lambda_{n,H}$ was defined in (\ref{eq:measure}).

\begin{claim}\label{cl:Azuma graphon}
For every $k\in \I N$ there is $C>0$ such that for every $F\in \mathcal{G}_k$, $\beta>0$, $n\in \I N$ and $H\in \mathcal{H}_n$ it holds that
\begin{equation}\label{eq:Azuma}
 \Lambda_{n,H}\left(\left\{\f{G}:G\in\mathcal{G}_n,\ \left|t(F,\f{G})-t(F,\f{H})\right|>\beta\right\}\right)\le \me^{-C\beta^2 n^2}.
\end{equation}
\end{claim}
\begin{proof}
Let $F\in \mathcal{G}_k$, $n\in \I N$ and $H\in \mathcal{H}_n$.
Let $\hom(F,G)$ denote the number of \emph{homomorphisms} from $F$ to $G$, that is, functions $V(F)\to V(G)$ that send edges to edges.
Note that  $\hom(F,G)=t(F,\f{G})\,n^k$.
With this notation we have that the left-hand side of (\ref{eq:Azuma}) is equal to
\begin{equation*}\label{eq:Azuma bla}
\Lambda_{n,H}\left(\left\{\f{G}:G\in\mathcal{G}_n,\ \left|\hom(F,G)-t(F,\f{H})n^k\right|>\beta n^k\right\}\right).
\end{equation*}

Enumerate the set $\binom{[n]}{2}$ as $\left\{e_1,\dots, e_{m}\right\}$, $m:={n\choose 2}$. 
Consider the corresponding edge-exposure martingale for $t(F,\_)$ in the edge rounding process for~$H$. Formally, for $s\in \{0,\dots,m\}$ we define a random variable $X_s:\mathcal{G}_n\to \mathbb{R}$ as
$$
X_s(G):=\sum_{f\in [n]^{[k]}} \prod_{\substack{\{i,j\}\in E(F) \\ a(f(i),f(j))>s}} w_H(f(i),f(j))\prod_{\substack{\{i,j\}\in E(F) \\ a(f(i),f(j))\le s}} {\mathbbm 1}_{\{f(i),f(j)\}\in E(G)},\quad G\in\mathcal{G}_n,
$$
where $a(i,j)$ for $ij \in {[n]\choose 2}$ is the unique index in $\left[m\right]$ with $e_{a(i,j)}=\{i,j\}$.
Consider the \mbox{($\sigma$-)algebra} filtration $\left(\mathcal{B}_s\right)_{s=0}^m$ on $\mathcal{G}_n$ that is generated by the functions $\left({\mathbbm 1}_{e_i\in E(G)}\right)_{i=1}^s$, i.e., revealing edges one-by-one according to the enumeration.
It is easy to see that the sequence $\left(X_s\right)_{s=0}^m$ forms a martingale with respect to the filtration $\left(\mathcal{B}_s\right)_{s=0}^m$.
Moreover $\mathbb{E}(X_0)=t(F,\f{H})n^k$ and $X_{m}(G)=\hom(F,G)$ for every $G\in \mathcal{G}_n$.
Let $s\in [m]$.
Then we have
\begin{equation*}
\begin{split}
\left|X_{s}-X_{s-1}\right|\le & \  \left|\left\{f:\in [n]^{[k]}:e_{s}\subseteq f(\,[k]\,)\right\}\right| \le k^2 n^{k-2}.
\end{split}
\end{equation*}
By Azuma's inequality (see e.g.\ \cite[Theorem 7.2.1]{AlonSpencer16pm}), we have that, for some $C=C(k)>0$,
\begin{eqnarray*}
&&\Lambda_{n,H}\left(\left\{\f{G}\mid G\in\C G_n,\ \left|\hom(F,G)-t(F,\f{H})n^k\right|>\beta n^k\right\}\right)\\ &= &  \Lambda_{n,H}\left(\left\{\f{G}\mid G\in\C G_n,\ \left|X_{m}-\mathbb{E}\left(X_0\right)\right|>\beta n^k\right\}\right) \\
&\le & \ 2\me^{-\frac{\beta ^2n^{2k}}{2mk^4n^{2(k-2)}}}\le \me^{-C\beta ^2n^2}.
\end{eqnarray*}
This finishes the proof of the claim.
\end{proof}

We are now ready to verify Definition~\ref{App:DefExpEq} 
for any given $\alpha>0$. The Inverse Counting Lemma of Borgs et al~\cite[Theorem 2.7(b)]{BCLSV08} (see also \Lo{Lemma~10.32}) 
implies that (depending on $\alpha>0$) there are $k\in \I N$ and $\beta>0$ such that
$$
\mathcal{Q}_\alpha\subseteq  \{(U,V)\in \W\times \W: \exists F\in\C G_k\ \left|t(F,U)-t(F,V)\right|>\beta \}.$$
We have \begin{equation*}
\begin{split}
\Lambda_{n,W}\left(\mathcal{Q}_\alpha\right)=&
\int \Lambda_{n,H}\left(\left\{\f{G}:G\in\mathcal{G}_n,\ \left(\f{H},\f{G}\right)\in \mathcal{Q}_\alpha\right\}\right)  \dd\mathbb{H}_{n,W}(\f{H}) \\
\le & \int \Lambda_{n,H}\left(\left\{\f{G}:G\in \mathcal{G}_n,\ \exists F\in\C G_k\  \left|t(F,\f{H})-t(F,\f{G})\right|>\beta\right\}\right)  \dd\mathbb{H}_{n,W}(\f{H}) \\
\le & \sum_{F\in\C G_k} \int \Lambda_{n,H}\left(\left\{\f{G}:G\in \mathcal{G}_n,\ \left|t(F,\f{H})-t(F,\f{G})\right|>\beta\}\right\}\right)  \dd\mathbb{H}_{n,W}(\f{H}) \\
\le & \ 2^{{k\choose 2}}\me^{-C\beta ^2n^2}
\end{split}
\end{equation*}
where $C>0$ depends only on $k\in \I N$ and the last inequality follows from Claim~\ref{cl:Azuma graphon}.
Putting all together, we obtain
$$\limsup_{n\to\infty}\frac{1}{s(n)} \log\left(\Theta_{n,W}\left(\mathcal{O}_\alpha\right)\right)\le \limsup_{n\to\infty} -\frac{1}{s(n)} C\beta ^2n^2=-\infty$$
by the assumption that $s(n)=o(n^2)$. 

This verifies all requirements of Definition~\ref{App:DefExpEq}  and finishes the proof.
\end{proof}

\section{LDPs for the measures $\tR_{n,W}$}
\label{se:RnW}

Now we are ready to establish Theorem~\ref{thm:GeneralSpeeds} that deals with LDPs for the random (edge rounded) graph~$\I G(n,W)$.

\begin{proof}[Proof of Theorem~\ref{thm:GeneralSpeeds}]
It is well-known (see e.g.\ \cite{DemboZeitouni10ldta}*{Theorem~4.2.13}) that, for a given speed, if two sequences of probability measures on the same  space are exponentially equivalent then one sequence satisfies an LDP if and only if the other satisfies the same LDP.
Thus the combination of Lemma~\ref{lm:ExpEq} and Theorem~\ref{thm:GeneralSpeedsH} easily implies all claims of Theorem~\ref{thm:GeneralSpeeds}, apart from Part~\ref{it:GS3}. 

Let us prove Part~\ref{it:GS3}. Here the speed is $s(n)=\omega(n^2)$ and the stated rate function is~$\chi_{\FORB(W)}$. There are two steps in the proof.
\begin{enumerate}
	\item [{\bf (I)}] If $\widetilde U\not \in \FORB(W)$, then there are $\alpha>0$ and $n_0\in \I N$ such that $\tR_{n,W}\left(B_{\delta_\Box}(\widetilde{U},\alpha)\right)=0$ for every $n\ge n_0$.
	\item [{\bf (II)}]  If $\alpha>0$ and $G\in \mathcal{G}_k$ are such that $\tind(G,W)\not= 0$, then for every $k\in\I N$ we have
	\beq{eq:II}
 \lim_{n\to \infty}\frac{1}{s(n)}\log\left(\tR_{n,W}\left(B_{\delta_\Box}\left(\widetilde{f^G},\frac{1}{k}+\alpha\right)\right)\right)=0.
\eeq
\end{enumerate}

First we show how {\bf (I)} and {\bf (II)} imply Part~\ref{it:GS3} of the theorem.

Let $F\subseteq \tW$ be a $\delta_\Box$-closed set.
If $F\cap \FORB(W)\not=\emptyset$, then the LDP upper bound \eqref{eq:upperGen} is vacuously satisfied.
Otherwise for some $\alpha>0$ we have $\delta_\Box(F,\FORB(W))\ge \alpha$ because we take the distance between two disjoint closed sets in a compact metric space.
For each $\T U\in F$ we pick $\alpha_U\in (0,\alpha)$ and $n_U\in \I N$ that satisfy~{\bf (I)}. Of course, the open balls centred at $\T U$ of radius $\alpha_U$ over all $\T U\in F$ cover~$F$. 
By the compactness of $F$ there is a finite subcover; let it be given by $\T U_1,\dots,\T U_l\in F$. 
Then, for every $n\ge \max_{i\in [l]} n_{U_i}$, we have
$$\tR_{n,W}(F)\le \tR_{n,W}\left(\bigcup_{i\in [l]} B_{\delta_\Box}\left(\widetilde{U}_i,\alpha_{U_i}\right)\right)=0.$$
Thus the LDP upper bound \eqref{eq:upperGen} vacuously holds.

Let $\mathcal{O}\subseteq \tW$ be a $\delta_\Box$-open set.
If $\mathcal{O}\cap \FORB(W)=\emptyset$, then the LDP lower bound \eqref{eq:lowerGen} is vacuously satisfied.
Otherwise there are $\T U\in \FORB(W)$ and $\alpha>0$ such that
$B_{\delta_\Box}(\widetilde{U},3\alpha)\subseteq \mathcal{O}$.
It is known that the cut-distance between any graphon $U$ and its typical $n$-vertex sample tends to 0 as $n\to\infty$ (see e.g.\ \cite[Lemma~10.16]{Lovasz:lngl}). Thus there is a graph $G\in \mathcal{G}_k$, where we can additionally assume that $k> 1/\alpha$, such that  $\tind(G,U)>0$ and $\delta_\Box(f^G,U)<\alpha$. It follows that
$$
B_{\delta_\Box}\left(\widetilde{f^G},\frac{1}{k}+\alpha\right)\subseteq B_{\delta_\Box}\left(\widetilde{U},\frac{1}{k}+\alpha+\delta_\Box(f^G,U)\right)\subseteq B_{\delta_\Box}(\widetilde{U},3\alpha)\subseteq \mathcal{O}.
$$
 Since $\T U\in \FORB(W)$, we must have $\tind(G,W)>0$. Thus we have by {\bf (II)} that
$$
\liminf_{n\to \infty}\frac{1}{s(n)}\log\left(\tR_{n,W}\left(\mathcal{O}\right)\right)\ge 
\liminf_{n\to \infty}\frac{1}{s(n)}\log\left(\tR_{n,W}\left(B_{\delta_\Box}\left(\widetilde{f^G},\frac{1}{k}+\alpha\right)\right)\right)=0,$$
and the LDP lower bound \eqref{eq:lowerGen} vacuously holds.

It remains to establish each of {\bf Steps (I)} and {\bf (II)}.

\medskip\noindent{\bf Step (I).}
Let $\T U\not \in \FORB(W)$.
By definition there are $k\in \I N$ and $H\in \mathcal{G}_k$ such that $\tind(H,W)=0$ and $\tind(H,U)>0$.
Since the map $\tind(H,\_)$ is $\delta_\Box$-continuous by Theorem~\ref{th:ConvOnW}, we can find $\alpha>0$ such that
$$B_{\delta_\Box}(\widetilde{U},\alpha)\subseteq \left\{\widetilde{V}\in \W: \tind(H,V)>\frac{1}{2}\,\tind(H,U)\right\}.$$
We show that this $\alpha$ works as required with $n_0:= \lceil {k^2}/\tind(H,U)\rceil$.

Let $n\ge n_0$ and $G\in \mathcal{G}_n$ be any graph such that $\delta_\Box(f^G,U)<\alpha$.
In particular, we have
\begin{equation}\label{eq:coolio}
\tind(H,f^G)-\binom{k}{2}\frac{1}{n}>\frac{1}{2}\left(\tind(H,U)-\frac{k^2}{n}\right)>0
\end{equation}
Note that
$$
\lambda^{\oplus k}\left(\left\{x\in [0,1]^k:\exists\, l\in [n] \ \exists\, i\not=j\in [k] \ x\ind{i},x\ind{j}\in I^n_l\right\}\right)\le \binom{k}{2}\frac{1}{n}.
$$
Thus, by
 (\ref{eq:coolio}) we can find 
an injective embedding of $H$ into~$G$. It follows by Fubini-Tonelli's theorem that
$\tind(G,W)\le \tind(H,W)$.
\hide{
; formally, 
\begin{equation*}
\begin{split}
\tind(G,W)= & \ \int_{[0,1]^{{n}}} \prod_{\{i',j'\}\in E(G)} W(x\ind{i'},x\ind{j'}) \prod_{\{i',j'\}\in E(\O G)} \left(1-W(x\ind{i'},x\ind{j'})\right) \dd\lambda^{\oplus n}(x) \\
\le & \ \int_{[0,1]^{{n}}} \prod_{i,j\in [k]\atop \{a(i),a(j)\}\in E(G)} W(x\ind{a(i)},x\ind{a(j)}) \prod_{i,j\in [k]\atop \{a(i),a(j)\}\in E(\O G)} \left(1-W(x\ind{a(i)},x\ind{a(j)})\right) \dd\lambda^{\oplus n}(x) \\
= & \ \int_{[0,1]^{{k}}} \prod_{\{i,j\}\in E(H)} W(x\ind{i},x\ind{j}) \prod_{\{i,j\}\in E(\O H)} \left(1-W(x\ind{i},x\ind{j})\right) \dd\lambda^{\oplus k}(x).
\end{split}
\end{equation*}
}
Since $\tind(H,W)=0$, we have $\tind(G,W)=0$. This implies that
$\tR_{n,W}(\{\tf{G}\})\le n!\, \tind(G,W)$ is zero. Since the discrete measure $\tR_{n,W}$ is supported on $\{\tf{G}\mid G\in\C G_n\}$, we conclude that $\tR_{n,W}\left(B_{\delta_{\Box}}\left(\widetilde{U},\alpha\right)\right)=0$, as desired.

\medskip\noindent{\bf Step (II)}.
Let $\alpha>0$ and let $G\in \mathcal{G}_k$ be such that $\tind(G,W)>0$. Let $H\sim \I G(n,W)$ be the random $n$-vertex sample from $W$.
For $\gamma\in (0,1)$ define
$$
X_\gamma:=\{x\in [0,1]^{{k}}: \forall \{i,j\}\in E(G) \ W(x\ind{i},x\ind{j})\ge \gamma,\
\forall \{i,j\}\in E(\O G) \ W(x\ind{i},x\ind{j})\le 1-\gamma \}.
$$
It follows from $\tind(G,W)>0$ and the $\sigma$-additivity of $\lambda^{\oplus k}$ that there is $\gamma>0$ such that 
$\lambda^{\oplus k}\left(X_{\gamma}\right)>0$.
By the Lebesgue Density Theorem, we can find ${{x}}\in X_\gamma$ such that ${{x}}\ind{i}\not ={{x}}\ind{j}$ whenever $i\not=j$ in $[k]$ and 
$$\lim_{l\to \infty}\frac{\lambda^{\oplus k}\left(X_{\gamma}\cap B_\infty\left({{x}},\frac{1}{l}\right)\right)}{\lambda^{\oplus k}\left(B_\infty\left({{x}},\frac{1}{l}\right)\right)}=1.$$

Fix $l\in \I N$ large enough so that
\begin{equation}\label{eq:fubinis}
\frac{\lambda^{\oplus k}\left(X_{\gamma}\cap B_\infty\left({{x}},\frac{1}{l}\right)\right)}{\lambda^{\oplus k}\left(B_\infty\left({{x}},\frac{1}{l}\right)\right)}>1-\frac{\alpha}2,
\end{equation}
and the intervals $J_{i}:=\left[{{x}}\ind{i}-\frac{1}{l},{{x}}\ind{i}+\frac{1}{l}\right]$, $i\in [k]$, are pairwise disjoint.

First, we prove~\eqref{eq:II} in the case when $n$ is taken only over integers divisible by~$k$. Informally, the main idea is that if an $n$-vertex graph $H$ is a uniform blow-up of $G$ then $\delta_\Box(\f{G},\f{H})=0$; furthermore, by adding any edges inside the $k$ parts of $H$ we increase the cut-distance by at most $1/k$ and thus we can afford any further $\alpha{n\choose 2}$ rounding errors without the cut-distance going over $1/k+\alpha$, 

\newcommand{\Mod}[2]{#1\mathrm{\,mod\,}#2}
Let $n\in \I N$ be such that $m:=n/k$ is an integer. For $i\in\I N$, let $\Mod{i}{k}$ denote the (unique) integer in $[k]$ congruent to $i$ modulo~$k$.
Define
$$\mathcal{X}:=\left\{x\in [0,1]^{{n}}:\forall i\in [n] \ x\ind{i}\in J_{\Mod i k}\right\}$$
and note that $\lambda^{\oplus k}\left(\mathcal{X}\right)=\left(\frac{2}{l}\right)^n$.

Let $x$ be a random $n$-vector, chosen uniformly from~$\mathcal{X}$. Thus we put exactly $m$ of the coordinates $x_i$ into each interval $J_{j}$. Call a pair $ij\in {[n]\choose 2}$ \emph{crossing} if $\Mod{i}{k}\not=\Mod{j}{k}$. Clearly, there are $m^2{k\choose 2}$ crossing pairs. Let $B(x)$ be the set of crossing pairs $ij\in {[n]\choose 2}$ such that either $\{\Mod{i}{k},\Mod jk\}\in E(G)$ and $W(x_i,x_j)<\gamma$ or $\{\Mod{i}{k},\Mod jk\}\in E(\O G)$ and  $W(x_i,x_j)>1-\gamma$. We call the pairs in $B(x)$ \emph{$x$-bad} and call all other crossing pairs \emph{$x$-good}. 

Take any $i_1,\dots,i_k\in [n]$ which are congruent  modulo $k$ to respectively $1,2,\dots,k$. Then $(x_{i_1},\dots,x_{i_k})$ is a uniform element of $\prod_{i=1}^k J_i$ and by Fubini-Tonelli's theorem, the probability of seeing at least one $x$-bad pair in ${\{i_1,\dots,i_k\}\choose 2}$ is the probability that a uniform element of $\prod_{i=1}^k J_i$ ends in the complement of $X_\gamma$. This is at most $\alpha/2$ by~\eqref{eq:fubinis}. Thus the expected fraction (for uniform $x\in\C X$) of $x$-bad pairs inside $\{i_1,\dots,i_k\}$ is at most $\alpha/2$.
Since every crossing pair  participates in the same number of $k$-tuples in ${[n]\choose k}$ with distinct residues modulo $k$, if we take a uniform such $k$-tuple and a uniform pair $ij$ inside it (which is necessarily crossing), then we get a uniformly distributed crossing pair. Moreover, we have already shown that, regardless of the chosen $k$-tuple, the probability (over the random choices of $ij$ and $x$) that $ij$ is $x$-bad is at most $\alpha/2$. Thus the expected
value of $|B(x)|$ is at most $\frac{\alpha}2\cdot m^2{k\choose 2}$. Let 
 $$
 \C Y:=\left\{x\in \C X\mid |B(x)|\le \alpha m^2{k\choose 2}\right\}.
 $$ 
 By Markov's inequality, the probability that $x\in \C Y$ is at least $1/2$. 
 
Note that if $ij\in {[n]\choose 2}$ is an $x$-good pair then the probability of $ij$ being rounded \emph{correctly} (meaning that $ij$ is an edge if and only if $\{\Mod ik,\Mod jk\}\in E(G)$) is at least $\gamma$. Thus if $x\in \C Y$ then the rounding produces at least $t:=\big\lceil (1-\alpha) m^2{k\choose 2}\big\rceil$ correct crossing pairs with probability at least $\gamma^t$. If this happens and we overlay all vertices in the $i$-th part of $H$ (namely those $j\in[n]$ with $\Mod{j}{k}=i$) with the $i$-th vertex of $G$ then the $L^1$-distance between the corresponding graphons $\f{H}$ and $\f{G}$ is at most 
 $$
 \frac{2}{(km)^2}\left( k {m\choose 2} + \alpha m^2{k\choose 2}\right)< \frac1k+\alpha,
$$
 where the first and second terms assume the worst-case scenario that all  non-crossing and all bad pairs respectively were rounded the ``wrong'' way.

Thus the probability that $\delta_\Box(\f{G},\f{H})\le \frac1k+\alpha$ for $H\sim \I G(n,W)$ is at least the product of the following terms:
 \begin{itemize}
\item $(2/l)^n$: the probability that the uniform $x\in [0,1]^n$ is in $\C X$,
\item $1/2$: the probability that a uniform $x\in\C X$ hits $\C Y$ (i.e., that there are at least $t$ $x$-good pairs),
\item $\gamma^t$: the probability that some $t$ of the $x$-good pairs, say the first $t$ of them in the lexicographic order on ${[n]\choose 2}$, are rounded correctly.
\end{itemize}
Altogether we have
\begin{equation}\label{eq:dsdsa}
\log\left(\tR_{n,W}\left(B_{\delta_\Box}\left(\widetilde{f^G},\frac{1}{k}+\alpha\right)\right)\right)
\ge  n\log\left(\frac{2}{l}\right) + \log\left(\frac12\right)+ t\log(\gamma),
\end{equation}
 which in absolute value is clearly in $O(n^2)\subseteq o(s(n))$ as $n\to\infty$ (over the multiples of~$k$), as desired.

Consider the case of general $n$. Assume that $n\ge 4k/\alpha$. Let $n'$ be the (unique) multiple of $k$ between $n-k+1$ and $n$. Let $H'$ be the subgraph of $H\sim \I G(n,W)$ induced by $[n']$. By Lemma~\ref{lm:Delete01} applied with $s:=n'/n$ we have $\delta_\Box(\f{H},\f{H'})\le 2(n/n'-1)<2k/n\le \alpha/2$. Clearly,  the random graph $H'$ is distributed as $\I G(n',W)$. Thus
\begin{equation*}
\tR_{n,W}\left(B_{\delta_\Box}\left(\widetilde{f^G},\frac{1}{k}+\alpha\right)\right)
\ge \tR_{n',W}\left(B_{\delta_\Box}\left(\widetilde{f^G},\frac{1}{k}+\frac{\alpha}{2}\right)\right).
\end{equation*}
 If $n\to\infty$, then $n'\to\infty$ over multiples of $k$ and the right-hand side is $\me^{-o(s(n))}$ by the inequality in~\eqref{eq:II} for $\alpha/2$, which was already established in this case, giving the required.

This finishes the proof of Theorem~\ref{thm:GeneralSpeeds}.\end{proof}

\section{Coloured graphons}\label{ColGraphons}

The definitions and results of this section are needed in order to establish the lower semi-continuity of the functions $J_{\V \alpha,\V p}$ and~$R_{\V p}$ that were defined in Section~\ref{Intro:n^2}.

Fix $k\in \mathbb{N}$.
By a \emph{$k$-coloured graphon} we mean a pair $(W,\mathcal{A})$ where $W\in \mathcal{W}$ and $\mathcal{A}\in \Ak$, that is, $\mathcal{A}$ is a partition of $[0,1]$ into $k$ measurable parts. One can view the partition $\mathcal{A}$ as a $k$-colouring of $[0,1]$.
Write $\Wk:=\C W\times \Ak$ for the space of all $k$-coloured graphons.
We define the pseudo-metric $d^{(k)}_\Box$ (the analogue of $d_\Box$) on $\Wk$ as
\begin{equation*}
d^{(k)}_\Box((U,\mathcal{A}),(V,\mathcal{B})):=  
\sup_{C,D\subseteq [0,1]} \sum_{i,j\in [k]}\left| \int_{C\times D} ({\I 1}_{A_i\times A_j}U-{\I 1}_{B_i\times B_j}V) \dd\lambda^{\oplus 2}\right|
 + \sum_{i\in [k]}\lambda(A_i\triangle B_i),
\end{equation*}
 for $(U,\mathcal{A}),(V,\mathcal{B})\in \Wk$, where $\C A=(A_1,\dots,A_k)$ and $\C B=(B_1,\dots,B_k)$.
 Informally speaking, two $k$-coloured graphons are close to each other in $d^{(k)}$ if
 they have similar distributions of coloured edges across cuts, where an edge is coloured by the colours of its endpoints. The second term is added so that e.g.\ we can distinguish two constant-0 graphons with different part measures.
 
The \emph{cut-distance} for coloured graphons is then defined as
\beq{eq:DeltaK}
\delta^{(k)}_\Box((U,\mathcal{A}),(V,\mathcal{B})):=\inf_{\phi,\psi} d^{(k)}_\Box((U,\mathcal{A})^\phi,(V,\mathcal{B})^\psi),
\eeq
where the infimum is taken over measure-preserving maps $\phi,\psi:[0,1]\to [0,1]$ and we denote
$(U,\mathcal{A})^\phi:=(U^\phi,\C A^\phi)$ and $\C A^\phi:=(\phi^{-1}(A_1),\dots,\phi^{-1}(A_k))$ 
with $(A_1,\dots,A_k)$ being the parts of~$\C A$.
As in the graphon case (compare with e.g.\ \Lo{Theorem 8.13}), some other definitions give the same distance (e.g.\ it is enough to take the identity function for $\psi$). We chose this definition as it is immediately clear from it that the function $\delta^{(k)}_\Box$ is symmetric and defines a pseudo-metric.
We write $\tWk$ for the corresponding quotient, where we identify two $k$-coloured graphons 
at $\delta^{(k)}_\Box$-distance~0. \arxiv{}{Its proof is obtained by the obvious adaptation of the proof of Lov\'asz and Szegedy~\cite[Theorem~5.1]{LovaszSzegedy07gafa} 
and can be found in the arxiv version of this paper~\cite{GrebikPikhurko23arxiv}.}

\begin{theorem}\label{th:compact colorod graphon}
The metric space $(\tWk,\delta^{(k)}_\Box)$ is compact. 
\end{theorem}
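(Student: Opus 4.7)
The plan is to adapt the Lov\'asz--Szegedy compactness proof for $(\tW,\delta_\Box)$ (cf.\ \Lo{Theorem 9.23}) to the coloured setting, carrying the colour partition along as auxiliary data throughout. Let $((W_n,\mathcal{A}_n))_{n\in\I N}$ be an arbitrary sequence in $\Wk$, with $\mathcal{A}_n=(A_{n,1},\ldots,A_{n,k})$; the aim is to extract a subsequence convergent in $\delta^{(k)}_\Box$ to some $(W_\infty,\mathcal{A}_\infty)\in\Wk$.

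First, for each $n$, starting from $\mathcal{A}_n$, I would apply the weak regularity lemma (\Lo{Lemma 9.16}) iteratively to produce a refining sequence $\mathcal{P}_{n,1}\le \mathcal{P}_{n,2}\le \cdots$ of measurable partitions of $[0,1]$, with $|\mathcal{P}_{n,m}|$ bounded by some $K(m,k)$ independent of $n$, such that $\mathcal{P}_{n,1}$ refines $\mathcal{A}_n$ and the step function $\widehat W_{n,m}$ obtained by averaging $W_n$ over $\mathcal{P}_{n,m}$-rectangles satisfies $d_\Box(W_n,\widehat W_{n,m})\le 1/m$. Next, for each $n$, choose a measure-preserving map $\phi_n:[0,1]\to [0,1]$ such that $\phi_n^{-1}(\mathcal{P}_{n,m})$ consists of consecutive intervals for every $m$ (this is possible since the $\mathcal{P}_{n,m}$ form a nested sequence of finite measurable partitions of the atomless standard probability space $([0,1],\lambda)$), and replace $(W_n,\mathcal{A}_n)$ by its weakly equivalent pull-back $(W_n^{\phi_n},\mathcal{A}_n^{\phi_n})$.

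Then I would run a diagonal extraction. For each fixed $m$, the data describing $\widehat W_{n,m}^{\phi_n}$ (finitely many interval endpoints plus the constant values on each rectangle) together with the $[k]$-labelling of the $\mathcal{P}_{n,m}^{\phi_n}$-intervals that encodes $\mathcal{A}_n^{\phi_n}$ lies in a compact finite-dimensional set. Passing to successive subsequences and diagonalising over $m$, I obtain limit step functions $\widehat W_\infty^{(m)}$ on a limit filtration of $[0,1]$ by interval partitions, together with limit colour classes $A_{\infty,i}$. The family $(\widehat W_\infty^{(m)})_{m\in\I N}$ is a $[0,1]$-valued $L^1$-martingale with respect to this filtration and therefore converges in $L^1$ to a graphon $W_\infty\in\C W$; the sets $(A_{\infty,1},\ldots,A_{\infty,k})$ form a measurable partition $\mathcal{A}_\infty$ of $[0,1]$.

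Finally, I would verify $d^{(k)}_\Box((W_n,\mathcal{A}_n)^{\phi_n},(W_\infty,\mathcal{A}_\infty))\to 0$ via the triangle inequality through the intermediate coloured step graphons $(\widehat W_{n,m}^{\phi_n},\mathcal{A}_n^{\phi_n})$ and $(\widehat W_\infty^{(m)},\mathcal{A}_\infty)$: the contribution from the coloured cut norm of $W_n^{\phi_n}-\widehat W_{n,m}^{\phi_n}$ is at most $k^2/m$ (since for a fixed partition the coloured cut norm is dominated by $k^2$ times the ordinary cut norm), the fixed-$m$ contributions tend to $0$ as $n\to\infty$ by the diagonal construction (giving in particular $\sum_i \lambda(A_{n,i}^{\phi_n}\triangle A_{\infty,i})\to 0$ and $L^1$-convergence of the step values), and the contribution $\widehat W_\infty^{(m)}\to W_\infty$ vanishes as $m\to\infty$ by the martingale convergence. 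The main obstacle is the coordinated setup at the start: ensuring that the regularity partitions $\mathcal{P}_{n,m}$ simultaneously refine $\mathcal{A}_n$ and form a filtration realisable by a single measure-preserving rearrangement $\phi_n$, so that both the graphon and the colour partition can be tracked in one diagonal extraction.
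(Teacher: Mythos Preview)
Your proposal is correct and follows essentially the same approach as the paper: both adapt the Lov\'asz--Szegedy compactness argument by starting the weak regularity iteration from the colour partition $\mathcal{A}_n$, realising the nested regularity partitions as interval partitions via measure-preserving maps, diagonalising on the finite step data (including the colour labels), invoking martingale convergence for the limit graphon, and closing with the triangle inequality using the $k^2/m$ bound for the coloured cut norm. The only cosmetic difference is that the paper applies a separate measure-preserving bijection at each level $\ell$ (each preserving the earlier interval structure) rather than a single $\phi_n$ handling all levels at once, but these amount to the same thing.
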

\arxiv{
\begin{proof}
The proof is obtained by the obvious adaptation of the proof of Lov\'asz and Szegedy~\cite[Theorem~5.1]{LovaszSzegedy07gafa} (see also~\cite[Theorem 9.23]{Lovasz:lngl}) that the space $(\tW,\delta_\Box)$ is compact.

Let $(W_n,\mathcal A_n)_{n\in\mathbb N}$ be an arbitrary sequence of elements of~$\Wk$. We have to find a subsequence 
that converges to some element of $\Wk$ with respect to~$\delta_\Box^{(k)}$. 

When dealing with the elements of $\Wk$,
we can ignore null subsets of $[0,1]$; thus all relevant statements, e.g.\ that one partition refines another, are meant to hold almost everywhere.

For $n\in\mathbb N$, let the parts of $\mathcal A_n$ be $(A_{n,1},\dots,A_{n,k})$ and, by applying a measure-preserving bijection to $(W_n,\mathcal A_n)$, assume by Theorem~\ref{th:ISMS} that the colour classes $A_{n,1},\dots,A_{n,k}\subseteq [0,1]$ are all intervals, 
coming in this order. By passing to a subsequence, assume that, for each $i\in [k]$, the length of $A_{n,i}$ converges to some $\alpha_i\in [0,1]$ as $n\to\infty$. With $\V\alpha:=(\alpha_1,\dots,\alpha_k)$, this gives rise to the ``limiting'' partition 
$$
\mathcal A
:=(\cI{\V\alpha}{1},\dots,\cI{\V\alpha}{k})
\in\Aalpha
$$ of $[0,1]$ into intervals. 

\renewcommand{\ell}{q}
Let $m_1:=k$ and inductively for $\ell=2,3,\ldots$ let $m_\ell$ be sufficiently large such that for every graphon $W$ and a measurable partition $\mathcal A'$ of $[0,1]$ with $|\mathcal A'|\le m_{\ell-1}$ there is a measurable partition $\mathcal P=(P_1,\dots,P_m)$ of $[0,1]$ refining $\mathcal A'$ such that $m\le m_\ell$ and $d_{\Box}(W,W_{\mathcal P})\le 1/\ell$. Here $W_{\mathcal P}$ denotes the projection of $W$ to the space of $\C P$-step graphons; namely, for every $i,j\in [m]$ with $P_i\times P_j$ non-null in $\lambda^{\oplus 2}$, $W_{\C P}$ assumes the constant value $\frac1{\lambda(P_i)\lambda(P_j)}\int_{P_i\times P_j} W\dd\lambda^{\oplus 2}$ on $P_i\times P_j$ (and, say, $W_{\C P}$ is defined to be 0 on all $\lambda^{\oplus 2}$-null products $P_i\times P_j$).
Such a number $m_\ell$
exists by~\cite[Lemma 9.15]{Lovasz:lngl}, a version of the Weak Regularity Lemma for graphons.
 
For each $n\in\mathbb N$, we do the following. Let $\mathcal P_{n,1}:=\mathcal A_n$ and, inductively on $\ell=2,3,\dots$, let $\mathcal P_{n,\ell}$ be the partition with at most $m_{\ell}$ parts obtained by applying the above Weak Regularity Lemma to $(W_n,\mathcal P_{n,\ell-1})$. By adding empty parts to $\mathcal P_{n,\ell}$, for each $\ell\ge 1$, we can assume that it has the same number of parts (namely, $m_\ell$) of each colour, that is, we can denote its parts as $(P_{n,\ell,i,j})_{i\in [k], j\in [m_\ell]}$, so that $P_{n,\ell,i,j}\subseteq A_{n,i}$ for all $(i,j)\in [k]\times [m_\ell]$.
Also, define $W_{n,\ell}:=(W_n)_{\mathcal P_{n,\ell}}$ to be the projection of the graphon $W_n$ on the space of $\mathcal P_{n,\ell}$-step graphons.

Then, iteratively for $\ell=2,3,\dots$, repeat the following. Find a measure-preserving bijection $\phi:[0,1]\to[0,1]$ such that $(\mathcal P_{n,\ell})^\phi$ is a partition into intervals and $\phi$ preserves the previous partitions
$\mathcal P_{n,1},\dots,\mathcal P_{n,\ell-1}$  (each of which is a partition into intervals by induction on~$\ell$). Then, for each $m\ge \ell$, replace $(W_{n,m},\mathcal P_{n,m})$ by $(W_{n,m},\mathcal P_{n,m})^\phi$. When we are done with this step, the following properties hold for each integer~$\ell\ge 2$:
\begin{itemize}
	\item\label{it:P1} $\delta_\Box^{(k)}((W_{n,\ell},\mathcal A_n),(W_n,\mathcal A_n))\le k^2/\ell$;
	\item\label{it:P3} 
the partition $\mathcal P_{n,\ell}$ refines $\mathcal P_{n,\ell-1}$ (and, inductively, also refines $\mathcal A_{n}=\mathcal P_{n,1}$);
	\item\label{it:P2} $|\mathcal P_{n,\ell}|= k m_\ell$ with exactly $m_\ell$ parts assigned to each colour class of $\mathcal A_n$.
\end{itemize}

Next, iteratively for $\ell=1,2,\dots$, we pass to a subsequence of $n$ so that for every $(i,j)\in [k]\times [m_\ell]$, the length of the interval $P_{n,\ell,i,j}$ converges, and for every pair $(i,j),(i',j')\in [k]\times [m_\ell]$, the common value of the step-graphon $W_{n,\ell}$ on $P_{n,\ell,i,j}\times P_{n,\ell,i',j'}$ converges.
	It follows that the sequence $W_{n,\ell}$ converges pointwise to some graphon $U_\ell$ which is itself a step-function with $km_\ell$ parts that are intervals. We use diagonalisation to find a subsequence of $n$ so that, for each $\ell\in\mathbb N$, $W_{n,\ell}$ converges to some step-graphon $U_\ell$ a.e.\ as $n\to\infty$, with the step partition $\mathcal P_\ell$ of  $U_\ell$  consisting of $km_\ell$ intervals and refining the partition~$\C A$. 
\hide{Moreover, 	we can denote the parts of $\mathcal P_\ell$ as
	$(P_{\ell,i,j})_{i\in[k], m\in[m_\ell]}$, so that $P_{\ell,i,1},\dots,P_{\ell,i,m_\ell}$ form a partition of~$\cI{\V\alpha}{i}$ for each $i\in [k]$.
}

It follows that, for all $s<t$ in $\I N$, the partition $\mathcal P_{t}$ is a refinement of $\mathcal P_s$ and, moreover,  $U_s$ is the conditional expectation $\mathbb E[U_t|\mathcal P_s]$ a.e. 
As observed in the proof of Lov\'asz and Szegedy~\cite[Theorem~5.1]{LovaszSzegedy07gafa},
this (and $0\le U_t\le 1$ a.e.) implies by the Martingale Convergence Theorem that $U_\ell$ converge a.e.\ to some graphon $U$ as $\ell\to\infty$. By the Dominated Convergence Theorem, $U_\ell\to U$ also in the $L^1$-distance.

We claim that $\delta_\Box((W_n,\mathcal A_n),(U,\mathcal A))\to 0$ as $n\to\infty$ (after we passed to the subsequence defined as above). Take any $\e>0$. Fix an integer $\ell>4/\e$ such that $\|U-U_\ell\|_1\le \e/(4k^2)$. Given $\ell$, fix $n_0$ such that  for all $n\ge n_0$ we have $\|U_\ell-W_{n,\ell}\|_1\le \e/(4k^2)$ and $\sum_{i=1}^k \lambda(\cI{\V\alpha}{i}\bigtriangleup A_{n,i})\le \e/4$. Then, for every $n\ge n_0$ we have
\begin{eqnarray*}
\delta_\Box^{(k)}((U,\mathcal A),(W_n,\mathcal A_n))&\le &
d_\Box^{(k)}((U,\mathcal A),(U_\ell,\mathcal A))
+ d_\Box^{(k)}((U_\ell,\mathcal A),(W_{n,\ell},\mathcal A_n))\\
&+& \delta_\Box^{(k)}((W_{n,\ell},\mathcal A_n),(W_n,\mathcal A_n))\\
&\le& k^2 \|U-U_\ell\|_1 + k^2\|U_\ell-W_{n,\ell}\|_1 + \sum_{i=1}^k \lambda(\cI{\V\alpha}{i}\bigtriangleup A_{n,i}) + 1/\ell\\
&\le& \e/4+\e/4+\e/4+\e/4\ =\ \e.
\end{eqnarray*}
 Since $\e>0$ was arbitrary, the claim is proved. Thus the metric space $(\tWk,\delta^{(k)}_\Box)$ is indeed compact.
\end{proof}
 }{}

\section{The lower semi-continuity of $J_{\V\alpha,\V p}$ and $R_{\V p}$}\label{Rate}

For this section we fix an integer $k\ge 1$, a symmetric $k\times k$ matrix $\V p=(p_{i,j})_{i,j\in [k]}\in [0,1]^{k\times k}$ and a non-zero real vector $\V\alpha\in [0,\infty)^k\NoZero$.
We show that the functions $J_{\V\alpha,\V p}$ and  $R_{\V p}$,  that were defined in~\eqref{eq:JAlphaP} and~\eqref{eq:R} respectively,
are lower semi-continuous functions on $(\tW,\delta_\Box)$.

Let $\Gamma:\Wk\to \C W$ be the map that forgets the colouring, i.e., $\Gamma(U,\mathcal{A}):=U$
for $(U,\C A)\in \Wk$.
For $i,j\in [k]$, let the map $\Gamma_{i,j}:\Wk\to \C W$ send $(U,(A_1,\dots,A_k))\in \Wk$  to the graphon $V$ defined as
$$
V(x,y):=\left\{\begin{array}{ll} 
	U(x,y),& (x,y)\in (A_i\times A_j)\cup (A_j\times A_i),\\
	p_{i,j},& \mbox{otherwise,}
\end{array}
\right.\quad \mbox{for $x,y\in [0,1]$.}
$$

\begin{lemma}\label{lm:Lipschitz}
	The maps $\Gamma$ and $\Gamma_{i,j}$, for ${i,j\in [k]}$, are $1$-Lipschitz maps from $(\Wk,d_\Box^{(k)})$ to $(\C W,d_\Box)$.
\end{lemma}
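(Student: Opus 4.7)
The plan is to prove both Lipschitz bounds by direct computation using the triangle inequality, treating the two maps separately.

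For the map $\Gamma$, the approach is immediate. Since $(A_1,\dots,A_k)$ and $(B_1,\dots,B_k)$ partition $[0,1]$, we have almost everywhere the decompositions $U = \sum_{i,j\in[k]} \I 1_{A_i\times A_j} U$ and $V = \sum_{i,j\in[k]} \I 1_{B_i\times B_j} V$. Thus for any measurable $C,D\subseteq[0,1]$,
\[
\left|\int_{C\times D}(U-V)\,\dd\lambda^{\oplus 2}\right|
\le \sum_{i,j\in[k]}\left|\int_{C\times D}(\I 1_{A_i\times A_j} U - \I 1_{B_i\times B_j} V)\,\dd\lambda^{\oplus 2}\right|.
\]
Taking the supremum over $C,D$ and adding the nonnegative term $\sum_{i}\lambda(A_i\triangle B_i)$ on the right recovers $d_\Box^{(k)}((U,\mathcal A),(V,\mathcal B))$, giving $d_\Box(U,V)\le d_\Box^{(k)}((U,\mathcal A),(V,\mathcal B))$.

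For the map $\Gamma_{i,j}$, let $U':=\Gamma_{i,j}(U,\mathcal A)$ and $V':=\Gamma_{i,j}(V,\mathcal B)$, and set $S_{\mathcal A}:=(A_i\times A_j)\cup(A_j\times A_i)$ (or $A_i\times A_i$ when $i=j$), similarly $S_{\mathcal B}$. The identity $U' = p_{i,j} + \I 1_{S_{\mathcal A}}(U-p_{i,j})$ yields
\[
U' - V' \;=\; [\I 1_{S_{\mathcal A}}U - \I 1_{S_{\mathcal B}}V] \;-\; p_{i,j}\big[\I 1_{S_{\mathcal A}} - \I 1_{S_{\mathcal B}}\big].
\]
Expanding $\I 1_{S_{\mathcal A}}$ as $\I 1_{A_i\times A_j}+\I 1_{A_j\times A_i}$ (resp.\ $\I 1_{A_i\times A_i}$) and applying the triangle inequality, the first bracket produces integrals that are among the terms in the sum $\sum_{a,b\in[k]}|\int_{C\times D}(\I 1_{A_a\times A_b}U - \I 1_{B_a\times B_b}V)|$ appearing in $d_\Box^{(k)}$. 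For the second bracket, the key estimate uses the product structure of the indicator functions: for any $a,b\in[k]$,
\[
\left|\int_{C\times D}(\I 1_{A_a\times A_b}-\I 1_{B_a\times B_b})\,\dd\lambda^{\oplus 2}\right|
= \big|\lambda(C\cap A_a)\lambda(D\cap A_b) - \lambda(C\cap B_a)\lambda(D\cap B_b)\big|
\]
which is bounded by $\lambda(A_a\triangle B_a)+\lambda(A_b\triangle B_b)$ after adding and subtracting $\lambda(C\cap A_a)\lambda(D\cap B_b)$. Since $p_{i,j}\le 1$, this contribution is absorbed into the term $\sum_c\lambda(A_c\triangle B_c)$ of $d_\Box^{(k)}$. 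Taking the supremum over $C,D$ completes the proof.

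The main obstacle is the book-keeping in the second case: one needs to pair the contributions coming from $(A_i\times A_j)$ and $(A_j\times A_i)$ with the single occurrence of each $\lambda(A_c\triangle B_c)$ in the definition of $d_\Box^{(k)}$, and to exploit the bound $p_{i,j}\le 1$ to avoid producing a worse constant. Everything else is a routine application of linearity and the triangle inequality; no deep tool is needed.
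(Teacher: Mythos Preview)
Your argument for $\Gamma$ is correct and coincides with the paper's.

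For $\Gamma_{i,j}$ there is a genuine gap when $i\ne j$. After expanding $\I 1_{S_{\mathcal A}}=\I 1_{A_i\times A_j}+\I 1_{A_j\times A_i}$ and applying your product estimate to each half, the total contribution of the indicator bracket is $2p_{i,j}\bigl[\lambda(A_i\triangle B_i)+\lambda(A_j\triangle B_j)\bigr]$, and this is \emph{not} in general dominated by $\sum_c\lambda(A_c\triangle B_c)$: with $k=3$, $\mathcal A=([0,\tfrac12],\emptyset,[\tfrac12,1])$, $\mathcal B=(\emptyset,[0,\tfrac12],[\tfrac12,1])$ and $p_{1,2}=1$, the former equals $2$ while the latter equals $1$. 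Hence the outline as written yields only $2$-Lipschitz; the ``book-keeping'' you flag cannot be completed merely by invoking $p_{i,j}\le 1$. One repair is to bound the combined indicator bracket \emph{before} splitting: setting $c_a:=\lambda(C\cap A_a)$, $c_a':=\lambda(C\cap B_a)$ (and analogously $d_a,d_a'$), rewrite it as $(c_i-c_i')d_j+(c_j-c_j')d_i+c_i'(d_j-d_j')+c_j'(d_i-d_i')$ and use the disjointness bounds $d_i+d_j\le 1$, $c_i'+c_j'\le 1$ to obtain $2\max\bigl(\lambda(A_i\triangle B_i),\lambda(A_j\triangle B_j)\bigr)$; this \emph{is} at most $\sum_c\lambda(A_c\triangle B_c)$ once one invokes the partition inequality $\lambda(A_m\triangle B_m)\le\sum_{c\ne m}\lambda(A_c\triangle B_c)$. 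For comparison, the paper's own one-line reduction, $d_\Box(\Gamma_{i,j}(U,\mathcal A),\Gamma_{i,j}(V,\mathcal B))\le d_\Box(U,V)$, is actually false (take $U=V=0$, $p_{i,j}=1$, and any $\mathcal A,\mathcal B$ with $S_{\mathcal A}\ne S_{\mathcal B}$), so your direct computation is the right approach once patched as above.
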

\begin{proof}
	First, consider $\Gamma:\Wk\to \mathcal{W}$. Take arbitrary $(U,\mathcal{A}),(V,\mathcal{B})\in \Wk$.
	Let $\C A=(A_1,\dots,A_k)$ and $\mathcal{B}=(B_1,\dots,B_k)$. Clearly, the pairwise products $A_i\times A_j$ (resp.\ $B_i\times B_j$) for $i,j\in [k]$ partition $[0,1]^2$. Thus
	we have
	\begin{eqnarray}		
			d_\Box(\Gamma(U,\mathcal{A}),\Gamma(V,\mathcal{B})) &= & d_\Box(U,V) \
			= \ \sup_{C,D\subseteq [0,1]} \left |\int_{C\times D} (U-V) \dd\lambda^{\oplus 2}\right |\nonumber \\
			&\le & \sup_{C,D\subseteq [0,1]}\sum_{i,j\in [k]}\left|\int_{C\times D} (U\,{\I 1}_{A_i\times A_j}-V\,\I 1_{B_i\times B_j}) \dd\lambda^{\oplus 2}\right| \nonumber\\
			&\le &  d_{\Box}^{(k)}((U,\mathcal{A}),(V,\mathcal{B})).\label{eq:Ga}
			\end{eqnarray}
	Thus the function $\Gamma$ is indeed $1$-Lipschitz.

	The claim about $\Gamma_{i,j}$ follows by observing that
	$$d_\Box(\Gamma_{i,j}(U,\mathcal{A}),\Gamma_{i,j}(V,\mathcal{B}))\le d_\Box(\Gamma(U,\mathcal{A}),\Gamma(V,\mathcal{B}))$$
	for every $(U,\mathcal{A}),(V,\mathcal{B})\in\Wk$.\end{proof}

\begin{lemma}\label{lm:Cont}
	Let $F$ be $\Gamma$ or $\Gamma_{i,j}$ for some $i,j\in [k]$.
	Then $F$  gives rise to a well-defined function $\tWk\to\tW$ which, moreover, is $1$-Lipschitz
	as a function from $(\tWk,\delta_\Box^{(k)})$ to~$(\tW,\delta_\Box)$.
\end{lemma}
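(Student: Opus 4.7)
The plan is to reduce the statement to Lemma~\ref{lm:Lipschitz} via the observation that both $\Gamma$ and $\Gamma_{i,j}$ commute with pull-backs along measure-preserving maps. This then automatically yields well-definedness on the quotient and upgrades the $d_\Box^{(k)}$-Lipschitz bound to a $\delta_\Box^{(k)}$-Lipschitz bound.

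First, I would verify the commutation identity
\[
F\bigl((U,\mathcal A)^\phi\bigr) \;=\; \bigl(F(U,\mathcal A)\bigr)^\phi
\]
for every measure-preserving $\phi:[0,1]\to[0,1]$ and every $(U,\mathcal A)\in\Wk$, when $F\in\{\Gamma,\Gamma_{i,j}\}$. For $\Gamma$ this is trivial since $\Gamma(U,\mathcal A)=U$ and $\Gamma(U^\phi,\mathcal A^\phi)=U^\phi$. For $\Gamma_{i,j}$, writing $\mathcal A=(A_1,\dots,A_k)$ and $\mathcal A^\phi=(\phi^{-1}(A_1),\dots,\phi^{-1}(A_k))$, the membership of $(x,y)$ in $\phi^{-1}(A_i)\times\phi^{-1}(A_j)$ is equivalent to the membership of $(\phi(x),\phi(y))$ in $A_i\times A_j$, so both sides agree pointwise: they equal $U(\phi(x),\phi(y))$ on the corresponding coloured rectangle and $p_{i,j}$ elsewhere.

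Next, I would deduce well-definedness on $\tWk$. If $\delta_\Box^{(k)}((U,\mathcal A),(V,\mathcal B))=0$, then by the definition of $\delta_\Box^{(k)}$ in~\eqref{eq:DeltaK} there are measure-preserving $\phi_n,\psi_n$ with $d_\Box^{(k)}((U,\mathcal A)^{\phi_n},(V,\mathcal B)^{\psi_n})\to 0$. Applying Lemma~\ref{lm:Lipschitz} and the commutation identity,
\[
d_\Box\bigl(F(U,\mathcal A)^{\phi_n},F(V,\mathcal B)^{\psi_n}\bigr)\le d_\Box^{(k)}\bigl((U,\mathcal A)^{\phi_n},(V,\mathcal B)^{\psi_n}\bigr)\to 0,
\]
so $\delta_\Box(F(U,\mathcal A),F(V,\mathcal B))=0$ and $F$ descends to a map $\tWk\to\tW$.

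Finally, the 1-Lipschitz bound on the quotient follows by the same two ingredients applied to the full infimum defining~$\delta_\Box$: for any $(U,\mathcal A),(V,\mathcal B)\in\Wk$,
\[
\delta_\Box\bigl(F(U,\mathcal A),F(V,\mathcal B)\bigr)
\le\inf_{\phi,\psi} d_\Box\bigl(F((U,\mathcal A)^\phi),F((V,\mathcal B)^\psi)\bigr)
\le\inf_{\phi,\psi} d_\Box^{(k)}\bigl((U,\mathcal A)^\phi,(V,\mathcal B)^\psi\bigr),
\]
where the first inequality uses commutation with pull-backs and the second uses Lemma~\ref{lm:Lipschitz}. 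The right-hand side is exactly $\delta_\Box^{(k)}((U,\mathcal A),(V,\mathcal B))$. I do not expect a substantive obstacle here; the only thing that requires care is checking that $\phi^{-1}(A_i)\times\phi^{-1}(A_j)=(\phi\otimes\phi)^{-1}(A_i\times A_j)$ so that the coloured-rectangle condition is pulled back cleanly, which is a direct set-theoretic check.
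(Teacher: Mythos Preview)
Your proof is correct and follows essentially the same approach as the paper: both rely on the commutation identity $F((U,\mathcal A)^\phi)=(F(U,\mathcal A))^\phi$ together with Lemma~\ref{lm:Lipschitz} to pass from the $d_\Box^{(k)}$-estimate to the $\delta_\Box^{(k)}$-estimate. The only minor difference is that you verify the commutation identity explicitly and treat well-definedness separately before the Lipschitz bound, whereas the paper folds both conclusions into a single $\epsilon$-argument (your well-definedness paragraph is in fact subsumed by your final Lipschitz estimate).
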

\begin{proof} 
	
	Take any $(U,\C A),(V,\C B)\in\Wk$. Let $\e>0$ be arbitrary. Fix measure-preserving maps $\phi,\psi:[0,1]\to [0,1]$ with $d_\Box^{(k)}((U,\C A)^\psi,(V,\C B)^\phi)<\delta_\Box^{(k)}((U,\C A),(V,\C B))+\e$. By Lemma~\ref{lm:Lipschitz}, 
we have 
\begin{eqnarray*}
	\delta_\Box\left(F(U,\C A),F(V,\C B)\right)&\le& 
	d_\Box\left((F(U,\C A))^\psi,(F(V,\C B))^\phi\right)\ =\
	d_\Box\left(F(U^\psi,\C A^\psi),F(V^\phi,\C B^\phi)\right)\\
	&\le& d_\Box^{(k)}((U^\psi,\C A^\psi),(V^\phi,\C B^\phi))
	\ <\ \delta_\Box^{(k)}\left((U,\C A),(V,\C B)\right)+\e.
\end{eqnarray*}
This implies both claims about $F$ as $\e>0$ was arbitrary.
\end{proof}

For $(U,(A_1,\dots,A_k))\in \Wk$, define
\beq{eq:DefIkp}
I^{(k)}_{\V p}(U,(A_1,\dots,A_k)):=\frac12\sum_{i,j\in [k]} \int_{A_i\times A_j} I_{p_{i,j}}(U) \dd\lambda^{\oplus 2}.
\eeq

	In the special case of~\eqref{eq:DefIkp}  when $k=1$ and $p_{1,1}=p$ (and we ignore the second component since ${\bf A}^{(1)}$ consists of just the trivial partition of $[0,1]$ into one part), we get the function
$I_p:\C W\to[0,\infty]$ of Chatterjee and Varadhan defined in~\eqref{eq:IpCV}.

\begin{lemma}\label{lm:lsc}
	The function $I^{(k)}_{\V p}$ gives a well-defined function $\tWk\to [0,\infty]$ which, moreover, is lower semi-continuous as a function on $(\tWk,\delta^{(k)}_\Box)$.
\end{lemma}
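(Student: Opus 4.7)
The plan is to express $I^{(k)}_{\V p}$ as a finite sum of compositions $I_{p_{i,j}}\circ\Gamma_{i,j}$ and then invoke Theorem~\ref{th:CV} together with Lemma~\ref{lm:Cont} as black boxes.

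Concretely, I claim that for every $(U,\C A)=(U,(A_1,\dots,A_k))\in \Wk$,
\beq{eq:IkpDecomp}
I^{(k)}_{\V p}(U,\C A)=\sum_{1\le i\le j\le k} I_{p_{i,j}}\bigl(\Gamma_{i,j}(U,\C A)\bigr).
\eeq
To verify this, recall that $\Gamma_{i,j}(U,\C A)$ agrees with $U$ on $(A_i\times A_j)\cup(A_j\times A_i)$ and equals the constant $p_{i,j}$ elsewhere. Since $h_{p_{i,j}}(p_{i,j})=0$, definition~\eqref{eq:IpCV} gives
\[
 I_{p_{i,j}}\bigl(\Gamma_{i,j}(U,\C A)\bigr)=\frac12\int_{(A_i\times A_j)\cup (A_j\times A_i)} h_{p_{i,j}}(U)\dd\lambda^{\oplus 2}.
\]
For $i=j$ this is precisely the diagonal contribution $\tfrac12\int_{A_i\times A_i} h_{p_{i,i}}(U)\dd\lambda^{\oplus 2}$ appearing in~\eqref{eq:DefIkp}. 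For $i<j$, the symmetries $p_{i,j}=p_{j,i}$ and $U(x,y)=U(y,x)$ imply that the integrals over $A_i\times A_j$ and $A_j\times A_i$ coincide, so $I_{p_{i,j}}(\Gamma_{i,j}(U,\C A))=\int_{A_i\times A_j} h_{p_{i,j}}(U)\dd\lambda^{\oplus 2}$, which matches the combined off-diagonal contribution of the $(i,j)$ and $(j,i)$ terms in~\eqref{eq:DefIkp}. Summing over $i\le j$ yields~\eqref{eq:IkpDecomp}.

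Given~\eqref{eq:IkpDecomp}, the lemma follows at once. By Lemma~\ref{lm:Cont}, each $\Gamma_{i,j}$ descends to a $1$-Lipschitz (hence continuous) map $(\tWk,\delta^{(k)}_\Box)\to(\tW,\delta_\Box)$; by Theorem~\ref{th:CV}, each $I_{p_{i,j}}$ is a well-defined lsc function on $(\tW,\delta_\Box)$. The composition of an lsc function with a continuous function is lsc, and a finite sum of $[0,\infty]$-valued lsc functions is lsc, so $I^{(k)}_{\V p}$ descends to a well-defined lsc function on $(\tWk,\delta^{(k)}_\Box)$.

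I do not anticipate any real obstacle here: the whole argument hinges on spotting the decomposition~\eqref{eq:IkpDecomp}, after which the result is an immediate consequence of already-established facts.
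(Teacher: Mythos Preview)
Your proof is correct and follows essentially the same approach as the paper: both establish the decomposition $I^{(k)}_{\V p}=\sum_{i\le j} I_{p_{i,j}}\circ\Gamma_{i,j}$ and then invoke Theorem~\ref{th:CV} together with Lemma~\ref{lm:Cont} to conclude well-definedness and lower semi-continuity. Your verification of the decomposition is slightly more explicit than the paper's, but the argument is the same.
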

\begin{proof}
Note that we can write
	\beq{eq:IkWCV}
	I^{(k)}_{\V p}(U,\mathcal{A})=\sum_{1\le i\le j\le k} I_{p_{i,j}}(\Gamma_{i,j}(U,\mathcal{A})),\quad (U,\C A)\in\Wk,
	\eeq
	because $\Gamma_{i,j}(U,\C A)$ assumes value $p_{i,j}$ outside of $(A_i\times A_j)\cup (A_j\times A_i)$ while $I_p(p)=0$ for any $p\in [0,1]$. Recall that, by Theorem~\ref{th:CV},
	$I_p$ gives a well-defined function $\tW\to[0,\infty]$ for every $p\in [0,1]$. 
	Thus, by Lemma~\ref{lm:Cont}, the right-hand side of~\eqref{eq:IkWCV} does not change if we replace $(U,\C A)$ by any other element of $\Wk$ at $\delta_\Box^{(k)}$-distance 0. We conclude that $I_{\V p}^{(k)}$ gives a well-defined function on~$\tWk$.
	
	Each composition $I_p\circ \Gamma_{i,j}$ 
	is lsc as a function $(\tWk,\delta_{\Box}^{(k)})\to [0,\infty]$	
	because, for every $\rho\in\I R$, the level set $\{I_p\circ \Gamma_{ij}\le \rho\}$ is closed as the pre-image under the continuous function $\Gamma_{i,j}:\tWk\to\tW$ of the closed 
	set~$\{I_p\le \rho\}$.
	(Recall that the function $I_p:\tW\to [0,\infty]$ is lsc by Theorem~\ref{th:CV}.)
	Thus $I^{(k)}_{\V p}:\tWk\to[0,\infty]$ is lsc by~\eqref{eq:IkWCV}, as a finite sum of lsc functions.
\end{proof}

Now we are ready to show that the functions $J_{\V\alpha,\V p}$ and $R_{\V p}$ are lsc (thus proving Theorem~\ref{th:JLSC} and the first part of Theorem~\ref{th:ourLDP}). The argument showing the lower semi-continuity of these functions is motivated by the proof of the Contraction Principle. 

\begin{theorem}\label{th:lsc}
	For every symmetric matrix $\V p\in [0,1]^{k\times k}$ and every non-zero real vector $\V\alpha\in [0,\infty)^k$, the functions $J_{\V\alpha,\V p}$ and $R_{\V p}$ are lower semi-continuous on $(\tW,\delta_\Box)$.	
\end{theorem}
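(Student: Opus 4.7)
The plan is to realise $J_{\V\alpha,\V p}$ and $R_{\V p}$ as infimum push-forwards of the already-proven lsc function $I^{(k)}_{\V p}:\tWk\to[0,\infty]$ (Lemma~\ref{lm:lsc}) along the continuous projection $\Gamma:\tWk\to\tW$ (Lemma~\ref{lm:Cont}), and then to invoke the standard fact that such push-forwards from a compact domain preserve lower semi-continuity.

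The first step will be to verify the reformulations
\begin{equation*}
 J_{\V\alpha,\V p}(\T U) \,=\, \inf\bigl\{I^{(k)}_{\V p}(V,\mathcal{A}) : V\in\T U,\ \mathcal{A}\in\Aalpha\bigr\}, \qquad R_{\V p}(\T U) \,=\, \inf\bigl\{I^{(k)}_{\V p}(V,\mathcal{A}) : V\in\T U,\ \mathcal{A}\in\Ak\bigr\}.
\end{equation*}
For $J_{\V\alpha,\V p}$ the inequality ``$\ge$'' is immediate from the definition upon specialising to the consecutive-interval partition $(\cI{\V\alpha}{1},\dots,\cI{\V\alpha}{k})\in\Aalpha$. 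For the reverse direction, given any $V\in\T U$ and $\mathcal{A}=(A_1,\dots,A_k)\in\Aalpha$, I would use Theorem~\ref{th:ISMS} piece-by-piece to build a measure-preserving bijection $\phi:[0,1]\to[0,1]$ satisfying $\phi(\cI{\V\alpha}{i})=A_i$ for every~$i$; then the pull-back $V^\phi$ is weakly isomorphic to~$V$, and the change-of-variables identity yields $I^{(k)}_{\V p}(V^\phi,(\cI{\V\alpha}{1},\dots,\cI{\V\alpha}{k}))=I^{(k)}_{\V p}(V,\mathcal{A})$. The identification for $R_{\V p}$ then follows by taking a further infimum over all admissible $\V\alpha$.

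Next I would observe that $\tWk$ is compact by Theorem~\ref{th:compact colorod graphon}, and that for every fixed $\V\alpha$ the subset $\tWalpha$, consisting of weak-equivalence classes of pairs $(V,\mathcal{A})$ with $\mathcal{A}\in\Aalpha$, is closed and hence compact. This closure follows because $d_\Box^{(k)}$ contains the summand $\sum_i \lambda(A_i\bigtriangleup B_i)$, so each map $(V,\mathcal{A})\mapsto\lambda(A_i)$ descends to a $1$-Lipschitz functional on $\tWk$ whose level sets cut out~$\tWalpha$. I would then invoke the general push-forward fact: if $X$ is a compact metric space, $f:X\to Y$ is continuous into a Hausdorff space, and $g:X\to[0,\infty]$ is lsc, then $g_*(y):=\inf\{g(x):f(x)=y\}$ is lsc on~$Y$. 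Indeed, for every $c\in\I R$ the sublevel set $\{g\le c\}$ is compact, so $\{g_*\le c\}=f(\{g\le c\})$ is also compact (the infimum being attained on every non-empty compact fibre of~$f$ by lower semi-continuity of~$g$) and hence closed. Applying this with $Y=\tW$, $f=\Gamma$, and either $(X,g)=(\tWalpha,I^{(k)}_{\V p}|_{\tWalpha})$ or $(X,g)=(\tWk,I^{(k)}_{\V p})$ will deliver the lower semi-continuity of $J_{\V\alpha,\V p}$ and $R_{\V p}$ respectively.

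The main technical obstacle will be the reformulation step, specifically the careful construction of the measure-preserving bijection $\phi$ and the verification that the pull-back identity holds even when some of the $\alpha_i$ (and hence the corresponding intervals $\cI{\V\alpha}{i}$ and sets $A_i$) vanish; once this is in place, the remaining argument is a direct assembly of results already established earlier in the paper.
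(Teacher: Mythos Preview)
Your proposal is correct and follows essentially the same approach as the paper: both recast $J_{\V\alpha,\V p}$ and $R_{\V p}$ as infimum push-forwards of the lsc function $I^{(k)}_{\V p}$ along the continuous map $\Gamma$, verify that $\tWalpha$ is closed in the compact space $\tWk$, and conclude via the identity $\{J_{\V\alpha,\V p}\le\rho\}=\Gamma(\{I^{(k)}_{\V p}\le\rho\}\cap\tWalpha)$ (respectively without the intersection for $R_{\V p}$). The paper's proof is slightly terser about the reformulation step but relies on the same measure-preserving bijection idea you outline.
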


\begin{proof}
	Note that $J_{\V\alpha,\V p}(\T U)$ for any $U\in\C W$ is equal to the infimum of $I_{\V p}^{(k)}(V,\C A)$ over all $(V,\C A)\in \Walpha$ such that $\Gamma(V,\C A)=V$ belongs to~$\T U$. Indeed, for any partition $\C A\in \Aalpha$ of $[0,1]$, one can find 
	a measure-preserving Borel bijection	$\phi$ of $[0,1]$ such that $\C A^\phi$ is equal a.e.\ to $(\cI{\V\alpha}{1},\dots,\cI{\V\alpha}{k})$.

	In the rest of the proof, let us view $\Gamma$ and $I^{(k)}_{\V p}$ as functions on $\tWk$ (by Lemmas~\ref{lm:Cont} and~\ref{lm:lsc}). Thus we have
	\begin{equation}\label{eq:Jap2}
	J_{\V\alpha,\V p}(\T U)=\inf_{\Gamma^{-1}(\T U)\cap\tWalpha}\, I_{\V p}^{(k)},
\quad \mbox{for each $ U\in\C W$},
	\end{equation}
 where $\tWalpha\subseteq \tWk$ denotes the set of all $\delta_\Box^{(k)}$-equivalence classes that intersect $\Walpha$ (equivalently, lie entirely inside $\Walpha$).
 
Take any graphon $U\in\C W$. The pre-image $\Gamma^{-1}(\T U)$ is a closed subset of $\tWk$ by the continuity of $\Gamma$ (Lemma~\ref{lm:Cont}). Also, $\tWalpha$
	is a closed subset of $\tWk$: if 
	$(V,(B_1,\dots,B_k))\in \Wk$ is not in $\Walpha$, 
	then the $\delta_{\Box}^{(k)}$-ball of radius e.g.\ 
	$$
	 \frac12\,\left\|\,(\lambda(B_i))_{i\in [k]} - \frac1{\|\V\alpha\|_1} \V\alpha\,\right\|_1>0
	 $$
	  around it is disjoint from~$\Walpha$. Recall that the space $\tWk$ is compact by Theorem~\ref{th:compact colorod graphon}. Thus the infimum in~\eqref{eq:Jap2} is taken over a (non-empty) compact set.  As any lsc function attains its infimum on any non-empty compact set and
	$I^{(k)}_{\V p}:\tWk\to [0,\infty]$ is lsc by Lemma~\ref{lm:lsc}, 
	there is $(V,\C A)\in \Walpha$ such that $V\in\T U$ and $I_{\V p}^{(k)}(\Tk{V,\C A})=J_{\V\alpha,\V p}(\T U)$,
	where $\Tk{V,\C A}$ denotes the $\delta_\Box^{(k)}$-equivalence class of $(V,\C A)$.
	
	Thus for any $\rho\in\I R$ the level set 
	$\{J_{\V\alpha,\V p}\le \rho\}$ 
	is equal to the image of $\{I_{\V p}^{(k)}\le \rho\}\cap\tWalpha$ under~$\Gamma$. Since the function $I_{\V p}^{(k)}$ is lsc by Lemma~\ref{lm:lsc}, the level set $\{I_{\V p}^{(k)}\le \rho\}$ is a closed and thus compact subset of $\tWk$. Thus the set $\{I_{\V p}^{(k)}\le \rho\}\cap\tWalpha$ is compact. Its image $\{J_{\V\alpha,\V p}\le \rho\}$  under the continuous map $\Gamma:\tWk\to \tW$ is compact and thus closed. Since $\rho\in\I R$ was arbitrary, the function $J_{\V\alpha,\V p}:\tW\to [0,\infty]$ is lsc.

	Since $R_{\V p}(\T U)$ is equal to the infimum of $I_{\V p}^{(k)}$ over $\Gamma^{-1}(\T U)$, the same argument (except we do not need to intersect $\Gamma^{-1}(\T U)$ with $\tWalpha$ anywhere) also works for $R_{\V p}$. 
\end{proof}

\section{Proof of Theorem~\ref{th:GenLDP}}\label{GenLDP}

Before we can prove  Theorem~\ref{th:GenLDP}, we need to give two auxiliary lemmas. The first one states, informally speaking, that the measure $\tP_{\V a,\V p}$ is ``uniformly continuous'' in~$\V a$.

\begin{lemma}\label{lm:DifferentRatios}
	For every symmetric	matrix $\V p\in [0,1]^{k\times k}$, real $\e\in (0,1)$ and non-zero integer vectors $\V a,\V b\in\I N^k_{\ge 0}$, if 
	$$\|\V b-\V a\|_1\le \e\, \min\left\{\,\|\V a\|_1,\|\V b\|_1\,\right\}$$ 
	then there is a (discrete) measure $\T{\I C}$ on $\tW\times \tW$ which gives a coupling between $\tP_{\V a,\V p}$ and $\tP_{\V b,\V p}$ such that for every $(\T U,\T V)$ in the support of $\T{\I C}$ we have $\delta_\Box(U,V)\le 4\e/(1-\e)$.
	\end{lemma}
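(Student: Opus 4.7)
The plan is to construct the coupling by sampling a single larger stochastic block model and then taking two induced subgraphs. Without loss of generality assume $n := \|\V a\|_1 \le \|\V b\|_1 =: m$. Define $\V c := (\max(a_i, b_i))_{i=1}^k$ and $N := \|\V c\|_1$, and note that
\[
N - n \,=\, \sum_{i:\, b_i > a_i}(b_i - a_i) \,\le\, \|\V b - \V a\|_1 \,\le\, \e n,
\]
and analogously $N - m \le \e m$. Sample $H \sim \I G(\V c, \V p)$ on $[N]$ with its canonical blocks $A^c_1, \dots, A^c_k$ of sizes $c_1, \dots, c_k$; inside each $A^c_i$, let $A^{\V a}_i$ (resp.\ $A^{\V b}_i$) denote the first $a_i$ (resp.\ $b_i$) vertices. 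Set $G := H[\bigcup_i A^{\V a}_i]$ and $G' := H[\bigcup_i A^{\V b}_i]$, each with vertices relabelled to $[n]$ and $[m]$ in block order. By the exchangeability of the block model, $G \sim \I G(\V a, \V p)$ and $G' \sim \I G(\V b, \V p)$, so pushing this discrete distribution forward via $(G, G') \mapsto (\tf G, \tf{G'})$ yields a measure $\T{\I C}$ on $\tW \times \tW$ whose marginals are $\tP_{\V a,\V p}$ and $\tP_{\V b,\V p}$.

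To bound $\delta_\Box(\f G, \f{G'})$ on every atom of $\T{\I C}$, I would triangulate through $\f H$. Let $\sigma : [n] \to [N]$ be the injection sending the $j$-th vertex of $G$ (in block order) to its label in $H$, and let $\psi : [0,1] \to [0,1]$ be the piecewise-linear map that sends each $I^n_j$ affinely and bijectively onto $I^N_{\sigma(j)}$. For $(x,y) \in I^n_j \times I^n_{j'}$ we have $\f H^\psi(x,y) = \f H(\psi(x), \psi(y))$, and since $\psi(x) \in I^N_{\sigma(j)}$, $\psi(y) \in I^N_{\sigma(j')}$, and $\f H$ is constant on $I^N_{\sigma(j)} \times I^N_{\sigma(j')}$ with value the edge indicator of $\{\sigma(j), \sigma(j')\}$ in $H$, this value equals the edge indicator of $\{j, j'\}$ in $G$, namely $\f G(x,y)$. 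Hence $\f H^\psi = \f G$. The push-forward $\psi_* \lambda$ is concentrated on $Y := \bigcup_{j \in [n]} I^N_{\sigma(j)}$ with constant Radon--Nikodym derivative $N/n \le 1 + \e$ there, so Lemma~\ref{lm:AlmostMP} gives $\delta_\Box(\f H, \f G) = \delta_\Box(\f H, \f H^\psi) \le 2(N/n - 1) \le 2\e$. An identical argument with $\V b$ in place of $\V a$ gives $\delta_\Box(\f H, \f{G'}) \le 2(N/m - 1) \le 2\e$, and the triangle inequality produces $\delta_\Box(\f G, \f{G'}) \le 4\e \le 4\e/(1-\e)$ for every joint outcome.

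The main verification step is the identification $\f H^\psi = \f G$ together with the hypothesis check for Lemma~\ref{lm:AlmostMP}. Both are clean because each image $I^N_{\sigma(j)}$ is an entire interval of length $1/N$ sitting inside the correct block of $H$, so $\f H$ is genuinely constant on every product $I^N_{\sigma(j)} \times I^N_{\sigma(j')}$ and the push-forward $\psi_* \lambda$ is just $(N/n)\,\I 1_Y\,\dd\lambda$. Once this is in place, the bound $4\e \le 4\e/(1-\e)$, valid for $\e \in (0,1)$, converts the cleaner estimate into the form stated in the lemma.
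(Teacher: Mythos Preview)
Your proof is correct and follows essentially the same strategy as the paper: couple $\I G(\V a,\V p)$ and $\I G(\V b,\V p)$ through a common block structure, then bound each cut-distance to the intermediary via Lemma~\ref{lm:AlmostMP} (or its corollary Lemma~\ref{lm:Delete01}) and conclude by the triangle inequality.

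The one structural difference is that you go ``up'' to a common supergraph $\I G(\V c,\V p)$ with $c_i=\max(a_i,b_i)$, whereas the paper goes ``down'' to a common induced subgraph on parts of size $\min(a_i,b_i)$. Your choice yields the slightly sharper deterministic bound $\delta_\Box(\f G,\f{G'})\le 4\e$ (since the Radon--Nikodym derivative is $N/n\le 1+\e$), while the paper's deletion route gives $4\e/(1-\e)$; either suffices for the applications. Both arguments rest on the same lemma and the same coupling idea, so this is a minor variation rather than a different method.
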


\begin{proof}
	Let $m:=\|\V a\|_1$ and $n:=\|\V b\|_1$. 
		Let $[m]=A_1\cup\dots\cup A_k$ (resp.\ $[n]=B_1\cup\dots\cup B_k$) be the partition into consecutive intervals with $a_1,\dots,a_k$ (resp.\ $b_1,\dots,b_k$) elements. For each $i\in [k]$ fix some subsets $A_i'\subseteq A_i$ and $B_i'\subseteq B_i$ of size $\min(a_i,b_i)$. Define $A':=\cup_{i=1}^k A_i'$ and
	$B':=\cup_{i=1}^k B_i'$. Fix any bijection $h:A'\to B'$ that sends each $A_i'$ to $B_i'$. We have
	\beq{eq:m-A'}
	\left|\,[m]\setminus A'\,\right|\le\sum_{i=1}^k \max(0,a_i-b_i)\le \|\V a-\V b\|_1\le \e m
	\eeq
	and similarly $\left|\,[n]\setminus B'\,\right|\le \e n$. 
	
We can couple random graphs $G\sim \I G(\V a,\V p)$ and $H\sim\I G(\V b,\V p)$ so that every pair $\{x,y\}$ in $A'$ is an edge in $G$ if and only if $\{h(x),h(y)\}$ is an edge in~$H$.
This is possible because, with $i,j\in [k]$ satisfying $(x,y)\in A_i'\times A_j'$, the probability of $\{x,y\}\in E(G)$ is $p_{i,j}$, the same as the probability with which $h(x)\in B_i$ and $h(y)\in B_j$ are made adjacent in~$H$ (so we can just use the same coin flip for both pairs). By making all edge choices to be mutually independent otherwise, we get a probability measure $\I C$ on pairs of graphs which is a coupling between $\I G(\V a,\V p)$ and $\I G(\V b,\V p)$.
 The corresponding measure $\T{\I C}$ on $\tW\times \tW$ gives  a coupling between $\tP_{\V a,\V p}$ and $\tP_{\V b,\V p}$. 

Let us show that this coupling $\T{\I C}$ satisfies the distance requirement of the lemma. Take any pair of graphs $(G,H)$ in the support of~$\I C$. Let $G':=G[A']$ be obtained from $G$ by removing all vertices in $[m]\setminus A'$. We remove at most $\e$ fraction of vertices by~\eqref{eq:m-A'}. By relabelling the vertices of~$G$, we can assume that all removed vertices come at the very end. Then the union of the intervals in the graphon $\f{G}$ corresponding to the vertices of $G'$ is an initial segment of $[0,1]$ of length $s\ge 1-\e$ and the graphon of $G'$ is the pull-back of $\f{G}$ under the map $x\mapsto sx$.
By Lemma~\ref{lm:Delete01}, we have $\delta_\Box(\f{G},\f{G'})\le 2(\frac{1}{1-\e}-1)= \frac{2\e}{1-\e}$. By symmetry, the same estimate applies to the pair $(H,H')$, where $H'$ is obtained from $H$ by deleting all vertices from~$[n]\setminus B'$. By the definition of our coupling, the function $h:V(G')\to V(H')$ is (deterministically) an isomorphism between $G'$ and~$H'$. Thus the graphons of $G'$ and $H'$ are weakly isomorphic. 
This gives by the Triangle Inequality the required upper bound on the cut-distance between $\tf{G}$ and $\tf{H}$. 
\end{proof}

The function $J_{\V\alpha,\V p}(\T V)$ is not in general continuous in $\V\alpha$ even when $\V p$ and $\T V$ are fixed. (For example, if $k=2$, $V=1-\f{K_2}$ is the limit of $K_n\sqcup K_n$, the disjoint union of two cliques of order $n$ each, and $\V p$ is the identity $2\times 2$ matrix then $J_{\V\alpha,\V p}(\T V)$ is $0$ for $\V\alpha=(\frac12,\frac12)$ and $\infty$ otherwise.) However, the following version of ``uniform semi-continuity'' in $\V\alpha$ will suffice for our purposes.

\begin{lemma}\label{lm:ContOfJ}
	Fix any symmetric $\V p\in [0,1]^{k\times k}$.
	Then for every $\eta>0$ there is $\e=\e(\eta,\V p)>0$ such that if  vectors ${\V\gamma},\V\kappa\in [0,\infty)^k$ with $\|\V\gamma\|_1=\|\V\kappa\|_1=1$ satisfy
	\begin{equation}\label{eq:cond}
		\kappa_i\le (1+\e)\gamma_i,\quad\mbox{for every $i\in [k]$},
		\end{equation}
	 then
	for every $U\in {\mathcal{W}}$ there is ${V}\in {\C W}$ with $\delta_\Box(U,{V})\le \eta$ and
	$$J_{\V\kappa,\V p}(\T{V})\le J_{{\V\gamma},\V p}(\T U) +\eta.$$
\end{lemma}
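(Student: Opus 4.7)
The plan is to construct $V$ as a pull-back of a near-optimal representative of $\T U$ along a piecewise linear map that rescales each cell of the partition. If $J_{\V\gamma, \V p}(\T U) = \infty$, I simply set $V := U$ and the conclusion is trivial. Otherwise, I fix $W \in \T U$ such that
$$\frac{1}{2}\sum_{i,j \in [k]} \int_{\cI{\V\gamma}{i}\times \cI{\V\gamma}{j}} h_{p_{i,j}}(W)\,\dd\lambda^{\oplus 2} \le J_{\V\gamma, \V p}(\T U) + \eta/2,$$
and define $\phi:[0,1]\to[0,1]$ to be the piecewise linear map that sends $\cI{\V\kappa}{i}$ affinely onto $\cI{\V\gamma}{i}$ for each $i$ with $\kappa_i > 0$; the remaining intervals $\cI{\V\kappa}{i}$ are $\lambda$-null, so the definition of $\phi$ on them is immaterial. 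Set $V := W^\phi$.

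To control the cut-distance, observe that the hypothesis $\kappa_i \le (1+\e)\gamma_i$ also forces $\kappa_i = 0$ whenever $\gamma_i = 0$, so the push-forward measure $\phi_*\lambda$ is absolutely continuous with respect to $\lambda$ with Radon--Nikodym derivative equal to $\kappa_i/\gamma_i \le 1 + \e$ on each $\cI{\V\gamma}{i}$ with $\gamma_i > 0$. Lemma~\ref{lm:AlmostMP} then yields that $V$ is a graphon with $\delta_\Box(U, V) = \delta_\Box(W, W^\phi) \le 2\e$.

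For the rate function, a change of variables $y = \phi(x)$ on each cell $\cI{\V\kappa}{i}\times \cI{\V\kappa}{j}$ with $\kappa_i\kappa_j > 0$ gives
$$\int_{\cI{\V\kappa}{i}\times \cI{\V\kappa}{j}} h_{p_{i,j}}(V(x,y))\,\dd x\,\dd y = \frac{\kappa_i\kappa_j}{\gamma_i\gamma_j}\int_{\cI{\V\gamma}{i}\times \cI{\V\gamma}{j}} h_{p_{i,j}}(W(x',y'))\,\dd x'\,\dd y',$$
while cells with $\kappa_i\kappa_j = 0$ contribute nothing. Since each factor $\kappa_i\kappa_j/(\gamma_i\gamma_j)$ is at most $(1+\e)^2$, summing over $(i,j)$ and using the choice of $W$ yields
$$J_{\V\kappa, \V p}(\T V) \le (1+\e)^2\bigl(J_{\V\gamma, \V p}(\T U) + \eta/2\bigr).$$

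The main obstacle is to convert this multiplicative bound into the required additive one $J_{\V\kappa, \V p}(\T V) \le J_{\V\gamma, \V p}(\T U) + \eta$. The key observation, which closes the argument, is that finite values of $J_{\V\gamma, \V p}$ are uniformly bounded by a constant $C(\V p)$ depending only on $\V p$: for any $(i,j)$ with $p_{i,j} \in \{0,1\}$, the only way the integrand $h_{p_{i,j}}(W)$ can be integrable on $\cI{\V\gamma}{i}\times\cI{\V\gamma}{j}$ is if $W$ matches the corresponding boundary value almost everywhere, contributing zero; whereas on cells with $p_{i,j} \in (0,1)$ the integrand $h_{p_{i,j}}$ is bounded on $[0,1]$. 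Using $J_{\V\gamma, \V p}(\T U) \le C(\V p)$, it then suffices to choose $\e = \e(\eta, \V p) > 0$ small enough that both $2\e \le \eta$ and $((1+\e)^2 - 1)C(\V p) + (1+\e)^2\eta/2 \le \eta$, which is possible since $(1+\e)^2 \to 1$ as $\e \to 0$.
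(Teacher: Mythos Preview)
Your proof is correct and follows essentially the same approach as the paper's: both choose a near-optimal representative of $\T U$ with respect to the $\V\gamma$-partition, pull it back along the piecewise linear map $\phi$ that rescales each $\cI{\V\kappa}{i}$ onto $\cI{\V\gamma}{i}$, invoke Lemma~\ref{lm:AlmostMP} to bound $\delta_\Box$, and then convert the multiplicative factor $(1+\e)^2$ into an additive $\eta$ via the key observation that finite values of $J_{\V\gamma,\V p}$ are bounded by a constant $C(\V p)$ depending only on $\V p$. The only cosmetic differences are that the paper fixes $\e$ explicitly at the outset (as $\min\{\eta/(3(2C+\eta)),\eta/2\}$, using the cruder bound $(1+\e)^2\le 1+3\e$) while you determine $\e$ at the end, and your explanation of why $h_{p_{i,j}}$ is either $\infty$ or bounded is slightly more detailed.
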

\begin{proof}
	For every fixed $p\in (0,1)$, the relative entropy function $h_p:[0,1]\to [0,\infty]$ is bounded (in fact, by  $\max\{h_p(0),h_p(1)\}<\infty$). Thus we can find a constant $C\in (0,\infty)$ that depends on $\V p$ only such that for every $x\in [0,1]$ and $i,j\in [k]$, $h_{p_{i,j}}(x)$ is either $\infty$ or at most~$C$.

Let us show that any positive $\e\le \min\{\,\frac{\eta}{3(2C+\eta)},\,\frac{\eta}2\,\}$ works. Let ${\V\gamma}$, $\V\kappa$, and $U$ be as in the lemma. 	Assume that $J_{{\V\gamma},\V p}(\T U)<\infty$ for otherwise  we can trivially take ${V}:=U$. By the choice of $C$, we have that $J_{{\V\gamma},\V p}(\T U)\le C$. 

By replacing $U\in\C W$ with a weakly isomorphic graphon, assume that, for the partition $(\cI{{\V\gamma}}{i})_{i\in [k]}$ of $[0,1]$ into intervals, 
we have
	$$\frac{1}{2}\sum_{i,j\in [k]} \int_{\cI{{\V\gamma}}{i}\times \cI{{\V\gamma}}{j}} I_{p_{i,j}}(U) \dd\lambda^{\oplus 2}<J_{{\V\gamma},\V p}(\T U)+\frac{\eta}{2}.$$
	
Let $\phi:[0,1]\to [0,1]$ be the a.e.\ defined function  which  on each interval $\cI{{\V\kappa}}{i}$ of positive length is the increasing linear function that bijectively maps this interval onto $\cI{{\V\gamma}}{i}$. 
The 
Radon-Nikodym derivative $D:=\frac{\dd \mu}{\dd\lambda}$ of the push-forward $\mu:=\phi_*\lambda$ of the Lebesgue measure $\lambda$ along $\phi$ assumes value
$\kappa_i/\gamma_i$ a.e.\ on~$\cI{{\V\gamma}}{i}$. (The union of $\cI{\V\gamma}{i}$ with $\gamma_i=0$ 
is a countable and thus null set,
and can be ignored from the point of view of~$D$.)
	Let $V:=U^\phi$. It is a graphon by the first part of Lemma~\ref{lm:AlmostMP}. 
We have by the definition of $J_{\V\kappa,\V p}$ that
	\begin{equation*}
			J_{\V\kappa,\V p}(\T{V})\le  \ \frac{1}{2}\sum_{i,j\in [k]} \int_{\cI{{\V\kappa}}{i}\times \cI{{\V\kappa}}{j}} I_{p_{i,j}}(U^\phi) \dd \lambda^{\oplus 2} 
			=  \ \frac{1}{2}\sum_{i,j\in [k]} \frac{\kappa_i\kappa_j}{\gamma_i\gamma_j}\int_{\cI{{\V\gamma}}{i}\times \cI{{\V\gamma}}{j}} I_{p_{i,j}}(U) \dd\lambda^{\oplus 2}.
	\end{equation*}
 By $\frac{\kappa_i\kappa_j}{\gamma_i\gamma_j}\le (1+\e)^2\le 1+3\e$, this  is at most
 $$
 (1+3\e)\,\frac1{2}\sum_{i,j\in [k]} \int_{\cI{{\V\gamma}}{i}\times \cI{{\V\gamma}}{j}} I_{p_{i,j}}(U) \dd\lambda^{\oplus 2} 
\le (1+3\e)  \left(J_{{\V\gamma},\V p}(\T U)+\frac{\eta}2 \right).
 $$	
 By $J_{{\V\gamma},\V p}(\T U)\le C$ and our choice of $\e$, this in turn is at most
$$
J_{{\V\gamma},\V p}(\T U) + 3\e C+(1+3\e)\frac{\eta}2\le  \ J_{{\V\gamma},\V p}(\T U)+ \eta.
$$

Thus it remains to estimate the cut-distance between $U$ and~${V}=U^\phi$. Since the Radon-Nikodym derivative $D=\frac{\dd \phi_*\lambda}{\dd\lambda}$ satisfies $D(x)\le 1+\e$ for a.e.\ $x\in [0,1]$ by~\eqref{eq:cond}, we have by Lemma~\ref{lm:AlmostMP} that $\delta_\Box(U,{V})\le 2\e\le \eta$. Thus $V$ is as desired.\end{proof}

Also, we will need the following characterisation of an LDP for compact spaces that involves only the measures of shrinking balls. Its proof can be found in e.g.~\cite[Lemma 2.3]{Varadhan16ld}.

\begin{lemma}\label{lm:LDP}
	Let $(X,d)$ be  a compact metric space, $s:\I N\to (0,\infty)$ satisfy $s(n)\to\infty$, and $I:X\to [0,\infty]$ 
	be a lower semi-continuous function on~$(X,d)$. Then
	a sequence of Borel probability measures $(\mu_n)_{n\in \I N}$ on $(X,d)$ satisfies an LDP 
	with speed $s$
	and rate function $I$
	if and only if 
	\begin{eqnarray}
		\lim_{\eta\to 0}\liminf_{n\to\infty} \frac1{s(n)}\,{\log\Big(\mu_n\big(
		 B_d(x,\eta)
			\big)\Big)} 
		&\ge& -I(x),\quad\mbox{for every $x\in X$,}\label{eq:lower}\\
		\lim_{\eta\to 0}\limsup_{n\to\infty} \frac1{s(n)}\,{ \log\Big(\mu_n\big(
			\O  B_d(x,\eta)
			\big)\Big)} &\le & -I(x),\quad\mbox{for every $x\in X$}.\label{eq:upper}\qqed
	\end{eqnarray}
\end{lemma}
	

In fact, under the assumptions of Lemma~\ref{lm:LDP}, the bounds~\eqref{eq:lowerGen} and~\eqref{eq:lower} (resp.\ \eqref{eq:upperGen} and~\eqref{eq:upper}) are equivalent to each other. So we will also refer to 
	\eqref{eq:lower} and~\eqref{eq:upper}
	as the \emph{(LDP) lower bound} and the \emph{(LDP) upper bound} respectively.

Now we are ready to prove Theorem~\ref{th:GenLDP}.

\begin{proof}[Proof of Theorem~\ref{th:GenLDP}]
Recall that we have to establish an LDP for the sequence $(\tP_{\V a_n,\V p})_{n\in\I N}$ of measures, where the integer vectors $\V a_n$, after scaling by $1/n$, converge to a non-zero vector $\V\alpha\in[0,\infty)^k\NoZero$. Since the underlying metric space $(\tW,\delta_{\Box})$ is compact and the proposed rate function $J_{\V\alpha,\V p}$ is lsc by Theorem~\ref{th:JLSC} (which is a part of the already established Theorem~\ref{th:lsc}), it suffices to prove the bounds in~\eqref{eq:lower} and~\eqref{eq:upper} of Lemma~\ref{lm:LDP} for any given graphon~$\T U\in \tW$.

Let us show the upper bound first, that is, that 
\beq{eq:red9}
 \lim_{\eta\to 0}\limsup_{n\to\infty} \frac{1}{(\|\V a_n\|_1)^2}\log \tP_{\V a_n,\V p}(\Sball(\widetilde U,\eta))\le -J_{\V\alpha,\V p}(\T U).
\eeq
 Recall that $\Sball(\widetilde U,\eta):=\{\widetilde V\mid \delta_\Box(U,V)\le \eta)\}$ denotes the closed ball of radius $\eta$ around $\widetilde U$ in $(\tW,\delta_\Box)$. 

 Take any $\eta>0$. Assume that, for example, $\eta<1/9$. Let $m\in\I N$ be sufficiently large. Define
\beq{eq:b}
 \V b:=(a_{m,1}\I 1_{\alpha_1>0},\ldots,a_{m,k}\I 1_{\alpha_k>0})\in\I N^k_{\ge 0}.
 \eeq
 In other words, we let $\V b$ to be $\V a_m$ except we set its $i$-th entry $b_i$ to be~0  for each $i\in [k]$ with $\alpha_i=0$.
 By Theorem~\ref{th:BCGPS} (the LDP by Borgs et al~\cite{BCGPS}) and Theorem~\ref{th:JLSC}, we have that
 \beq{eq:BCGPSApplied1}
 \limsup_{n\to\infty} \frac{1}{(n\,\|\V b\|_1)^2}\log\tP_{n\V b,\V p}(\Sball(\widetilde U,\eta))\le 
 -\inf_{\Sball(\widetilde U,\eta)} J_{\V b,\V p}.
 \eeq

Since $m$ is sufficiently large, we can assume that $\|\frac1n\, \V a_n-\V\alpha\|_1\le \xi\,\|\V\alpha\|_1$ for every $n\ge m$, where e.g.~$\xi:=\eta/40$. 
In particular, we have 
$\|\V a_m-\V b\|_1
\le \|\V a_m-m\V\alpha\|_1
\le \xi m\|\V\alpha\|_1$ from which it follows that 
$$
 \|\V b -m\V\alpha\|_1\le \|\V b -\V a_m\|_1+\| \V a_m - m\V\alpha\|_1\le 2\xi m\,\|\V\alpha\|_1.
$$ 
Let $n$ be sufficiently large (in particular, $n\ge m$) and let $n':= \floor{n/m}$. By above, we have
\hide{\begin{eqnarray}
\|n'\V b-\V a_n\|_1&\le& \textstyle 
 \|\V b\|_1+\|\frac{n}{m}\V b-n\V\alpha\|_1+\|n\V\alpha-\V a_n\|_1\
\le\ \|\V b\|_1+3\xi n\|\V\alpha\|_1\nonumber\\
 &\le &\textstyle
 \|\V b\|_1 + 4\xi \min\{\,\|\frac{n}{m}\V b\|_1,\,\| \V a_n\|_1\,\}\ \le\ \frac{\eta}9\,\min\{\,\|n'\V b\|_1,\,\| \V a_n\|_1\,\}.\label{eq:n'b-an}
\end{eqnarray}
}
\begin{eqnarray}
\|n'\V b-\V a_n\|_1&\le& \textstyle 
 \|\V b\|_1+\|\frac{n}{m}\V b-n\V\alpha\|_1+\|n\V\alpha-\V a_n\|_1\nonumber\\
&\le& \|\V b\|_1+3\xi n\|\V\alpha\|_1\nonumber\\
 &\le &\textstyle
 \|\V b\|_1 + 4\xi \min\{\,\|\frac{n}{m}\V b\|_1,\,\| \V a_n\|_1\,\}\nonumber\\ 
&\le& \textstyle\frac{\eta}9\,\min\{\,\|n'\V b\|_1,\,\| \V a_n\|_1\,\}.\label{eq:n'b-an}
\end{eqnarray}
 Thus, by Lemma~\ref{lm:DifferentRatios}, there is a coupling $\T{\I C}$ between $\tP_{n'\V b,\V p}$ and $\tP_{\V a_n,\V p}$ such that for every $(\widetilde{V},\widetilde{W})$ in the support of $\T{\I C}$ we have $\delta_\Box(V,{W})\le \eta/2$. Thus, if $\widetilde{W}\sim\tP_{\V a_n,\V p}$ lands in $\Sball(\widetilde{ U},\eta/2)$ then necessarily $\widetilde{V}\sim \tP_{n'\V b,\V p}$ lands in $\Sball(\widetilde{U},\eta)$. This gives that
 $$
 \tP_{\V a_n,\V p}(\Sball(\widetilde U,\eta/2))\le \tP_{n'\V b,\V p}(\Sball(\widetilde U,\eta)).
 $$
 Since this is true for every sufficiently large $n$ and it holds by~\eqref{eq:n'b-an} that, for example, 
 $\|\V a_n\|_1/(n'\,\|\V b\|_1) \le 1+\eta/9\le \sqrt{1+\eta}$, we have that
 \beq{eq:red1}
 \limsup_{n\to\infty} \frac{1+\eta}{(\|\V a_n\|_1)^2}\log \tP_{\V a_n,\V p}(\Sball(\widetilde U,\eta/2))\le \limsup_{n'\to\infty}  \frac{1}{(n'\,\|\V b\|_1)^2}\log \tP_{n'\V b,\V p}(\Sball(\widetilde U,\eta)).
 \eeq

Let us turn our attention to the right-hand side of~\eqref{eq:BCGPSApplied1}. 
Pick some $\T V\in \Sball(\T U,\eta)$ with $J_{\V b,\V p}(\T V)\le \inf_{\Sball(\T U,\eta)} J_{\V b,\V p}+\eta$. (In fact, by the lower semi-continuity of $J_{\V b,\V p}$ and the compactness of $\Sball(\T U,\eta)$, we could have required
that $J_{\V b,\V p}(\T V)= \inf_{\Sball(\T U,\eta)} J_{\V b,\V p}$.) 
Since $\V a_n/\|\V a_n\|_1$ converges to $\V\alpha/\|\V\alpha\|_1$ as $n\to\infty$ and we chose $m$ to be sufficiently large,
we can assume that $b_i=0$ if and only if $\alpha_i=0$ and that 
Lemma~\ref{lm:ContOfJ} applies for $\V\gamma:=\V b/\|\V b\|_1$ and $\V\kappa:=\V\alpha/\|\V\alpha\|_1$ (and our~$\eta$). 
The lemma gives that 
there is $\T{W}\in\Sball(\T V,\eta)$ such that $J_{\V\alpha,\V p}(\T{W})-\eta\le J_{\V b,\V p}(\T{V})$. Thus we get the following upper bound on the right-hand side of~\eqref{eq:BCGPSApplied1}:
\beq{eq:red2}
-\inf_{\Sball(\T U,\eta)} J_{\V b,\V p}\le -J_{\V b,\V p}(\T V)+\eta\le -J_{\V\alpha,\V p}(\T{W})+2\eta\le -\inf_{\Sball(\T U,2\eta)} J_{\V \alpha,\V p}+2\eta.
\eeq
 By putting~\eqref{eq:BCGPSApplied1}, \eqref{eq:red1} and~\eqref{eq:red2} together we get that, for every $\eta\in (0,1/9)$,
 $$
 \limsup_{n\to\infty} \frac{1+\eta}{(\|\V a_n\|_1)^2}\log \tP_{\V a_n,\V p}(\Sball(\widetilde U,\eta/2))\le
-\inf_{\Sball(\T U,2\eta)} J_{\V \alpha,\V p}+2\eta.
$$ 
 If we take here the limit as $\eta\to 0$ then the infimum in the right-hand side converges to~$J_{\V \alpha,\V p}(\T U)$ by the lower semi-continuity of $J_{\V \alpha,\V p}$ (established in Theorem~\ref{th:JLSC}), giving the claimed upper bound~\eqref{eq:red9}.

Let us turn to the lower bound, i.e.\ we prove~\eqref{eq:lower} for $\T U\in\tW$. As before, take any sufficiently small $\eta>0$, then sufficiently large $m\in\I N$ and define $\V b$ by~\eqref{eq:b}. By Theorems~\ref{th:BCGPS} and~\ref{th:JLSC} applied to the open ball around $\T U$ of radius $2\eta$, we have
 \beq{eq:red4}
\liminf_{n\to\infty} \frac{1}{(n\,\|\V b\|_1)^2}\log\tP_{n\V b,\V p}(B_{\delta_\Box}(\widetilde U,2\eta))\ge 
-\inf_{\Sball(\T U,\eta)} J_{\V b,\V p}.
 \eeq
 Similarly as for the upper bound, the left-hand side can be upper bounded via Lemma~\ref{lm:ContOfJ} by, for example, 
 \beq{eq:red3}
 \liminf_{n\to\infty} \frac{1-\eta}{(\|\V a_n\|_1)^2}\log \tP_{\V a_n,\V p}(\Sball(\widetilde U,3\eta)). 
 \eeq
 Since $m$ is sufficiently large,
 we can apply Lemma~\ref{lm:ContOfJ} to $\V\kappa:=\B b/\|\V b\|_1$, $\V\gamma:=\V\alpha/\|\V\alpha\|_1$, the given graphon $U$ and our chosen $\eta>0$ 
 to find $\T{V}\in \Sball(\T U, \eta)$ such that $J_{\V b,\V p}(\T{V})\le J_{\V\alpha,\V p}(\T U)+\eta$. Thus
 $$
 -\inf_{\Sball(\T U,\eta)} J_{\V b,\V p}\ge -J_{\V b,\V p}(\T{V})\ge -J_{\V\alpha,\V p}(\T U)-\eta.
 $$
 By~\eqref{eq:red4}, this is a lower bound on the expression in~\eqref{eq:red3}. Taking the limit as $\eta\to 0$ we get the required LDP lower bound~\eqref{eq:lower}. This finishes the proof of Theorem~\ref{th:GenLDP}.
 \end{proof}

\section{Proof of Theorem~\ref{th:ourLDP}}\label{ourLDP}

\begin{proof}[Proof of Theorem~\ref{th:ourLDP}] 
Recall that $W$ is a $k$-step graphon with non-null parts whose values are encoded by a symmetric $k\times k$ matrix $\V p\in [0,1]^{k\times k}$. We consider the $W$-random graph $\I G(n,W)$ where we first sample $n$ independent uniform points $x_1,\dots,x_n\in [0,1]$ and then make each pair $\{i,j\}\subseteq [n]$ an edge with probability $W(x_i,x_j)$. We have to prove an LDP for the corresponding sequence $(\tR_{n,W})_{n\in\I N}$ of measures on the metric space $(\tW,\delta_\Box)$ with speed $s(n)=(c+o(1))n^2$. Of course, it is enough to do the case $s(n)=n^2$ as this affects only the scaling factors. Then the claimed rate function is $R_{\V p}$, which was defined in~\eqref{eq:R}. Recall that the lower semi-continuity of $R_{\V p}$ was established in Theorem~\ref{th:lsc}.

Let us show the LDP lower bound. Since the underlying space $(\tW,\delta_\Box)$ is compact, it is enough to prove the bound in~\eqref{eq:lower} of Lemma~\ref{lm:LDP} for any given $\T U\in \T{\C W}$, that is, that
\beq{eq:L1}
\lim_{\eta\to 0}\liminf_{n\to\infty} \frac{1}{n^2} \log \tR_{n,W}(\Sball(\widetilde U,\eta))\ge - R_{\V p}(\T U).
\eeq
Take any $\e>0$. By the definition of $R_{\V p}$, we can fix a vector $\V\alpha=(\alpha_1,\dots,\alpha_k)\in [0,1]^k$ 
such that $\|\V\alpha\|_1=1$ and 
 $$
 R_{\V p}(\T U)\ge J_{\V \alpha,\V p}(\T U)-\e.
$$
Let  $m$ be the number of non-zero entries of $\V \alpha$. As $\V \alpha$ is non-zero, we have $m\ge 1$. We can assume by symmetry that $\alpha_1,\dots,\alpha_m$ are the non-zero entries. Define $\xi:=\frac12 \min_{i\in [m]} \alpha_i>0$. 

For each $n\in\I N$, take any integer vector $\V a_n=(a_{n,1},\dots,a_{n,k})\in \NZ^k$ such that $\|\V a_n\|_1=n$ and $\|\V a_{n}-n\,\V\alpha\|_\infty<1$ (in particular, we have $a_{n,i}=0$ if $\alpha_i=0$). 

Let $n$ be sufficiently large. In particular, for every $i\in [m]$ we have that $a_{n,i}/n\ge \xi$. 
When we generate $G\sim \I G(n,W)$ by choosing first random $x_1,\dots,x_n\in [0,1]$, 
 it holds with probability at least, very roughly, $\xi^{-n}$ that for each $i\in [k]$ the number of $x_j$'s that belong to the $i$-th part of the step graphon~$W$ is exactly~$a_{n,i}$. Conditioned on this event of positive measure, the resulting graphon
 $\tf{G}$ is distributed according to $\tP_{\V a_n,\V p}$. Thus 
 $$
 \tR_{n,W}(S)\ge \xi^{-n}\,\tP_{\V a_n,\V p}(S),\quad\mbox{for every $S\subseteq \tW$}.
 $$
 This and Theorem~\ref{th:GenLDP} give that, for every $\eta>0$, 
 \begin{eqnarray*}
 \limsup_{n\to\infty} \frac{1}{n^2} \log \tR_{n,W}(\Sball(\widetilde U,\eta))&\ge& \limsup_{n\to\infty} \frac{1}{n^2} \log \tP_{\V a_n,\V p}(\Sball(\widetilde U,\eta))\\
 &\ge& -\inf_{\Sball(\T U,\eta/2)} J_{\V\alpha,\V p}
 \ \ge\ -J_{\V\alpha,\V p}(\T U)\ \ge\ -R_{\V p}(\T U)-\e.
 \end{eqnarray*}
 Taking the limit as $\eta\to 0$, we conclude that the LDP lower bound~\eqref{eq:L1} holds within additive error~$\e$. As $\e>0$ was arbitrary, the lower bound
 holds.
 
 Let us show the upper bound~\eqref{eq:upperGen} of Definition~\ref{df:LDP} for any closed set $F\subseteq \tW$, that is, that
  \beq{eq:U3}
 \limsup_{n\to\infty} \frac1{n^2} \log \tR_{n,W}(F)\le -\inf_F R_{\V p}.
 \eeq

 For each $n\in\I N$, we can write $\tR_{n,W}(F)$ as the sum over all  $\V a\in\NZ^k$ with $\|\V a\|_1=n$ of the probability that the distribution of random independent $x_1,\dots,x_n\in [0,1]$ per $k$ steps of $W$ is given by the vector $\V a$ times the probability conditioned on $\V a$ to hit the set~$F$. This conditional probability is exactly $\tP_{\V a,\V p}(F)$. Thus $\tR_{n,W}(F)$ is a convex combination of the reals $\tP_{\V a,\V p}(F)$ and there is a vector $\V a_n\in \NZ^k$ with $\|\V a_n\|_1=n$ such that
 \beq{eq:an}
 \tR_{n,W}(F)\le \tP_{\V a_n,\V p}(F).
 \eeq
 Fix one such vector~$\V a_n$ for each $n\in\I N$.
 
 Since the set of all real vectors $\V\alpha\in [0,1]^k$ with $\|\V\alpha\|_1=1$ is compact, we can find an increasing sequence $(n_i)_{i\in\I N}$ of integers such that 
 \beq{eq:U1}
 \limsup_{n\to\infty} \frac1{n^2} \log \tR_{n,W}(F)=\lim_{i\to\infty} \frac1{n_i^2} \log \tR_{n_i,W}(F),
 \eeq
 and the scaled vectors $\frac1{n_i}\,\V a_{n_i}$ converge to some real vector $\V\alpha\in [0,1]^k$.
 
 Let $(\V b_n)_{n\in\I N}$ be obtained by filling the gaps in $(\V a_{n_i})_{n\in\I N}$, meaning that if $n=n_i$ for some  $i\in\I N$ then we let $\V b_n:=\V a_{n_i}$; otherwise we pick any $\V b_n\in\NZ^k$ that satisfies $\|\V b_n\|_1=n$ and $\|\V b_n-n\,\V\alpha\|_\infty<1$. Since the normalised vectors $\V b_{n}/\|\V b_{n}\|_1$ converge to the same limiting vector~$\V\alpha$, we have by Theorem~\ref{th:GenLDP} that
 \beq{eq:U2}
 \limsup_{i\to\infty} \frac1{n_i^2} \log \tP_{\V a_{n_i},\V p}(F)\le \limsup_{n\to\infty} \frac1{n^2} \log \tP_{\V b_{n},\V p}(F)\le -\inf_F J_{\V\alpha,\V p}.
 \eeq
 Putting~\eqref{eq:an}, \eqref{eq:U1} and~\eqref{eq:U2} together with the trivial consequence $J_{\V\alpha,\V p}\ge R_{\V p}$ of the definition of $R_{\V p}$, we get the desired upper bound~\eqref{eq:U3}. This finishes the proof of Theorem~\ref{th:ourLDP}.\end{proof}

\section{Concluding remarks}\label{se:concluding}

The existence of an LDP for $(\tR_{n,W})_{n\in\I N}$ in the case case of a general graphon $W$ and speed $\Theta(n^2)$ is the key remaining open problem.
We remark that it is not hard to show that any potential rate function for speed $s(n)=\Theta(n^2)$ has to be zero on~$\RN(W)$. On the other hand, there are graphons $W$ such that $\RN(W)=\tW$: for example, take a countable set of graphons $\{W_i\mid i\in \I N\}$ dense in $(\tW,\delta_\Box)$ and let $W$ be zero except it has a scaled copy of $W_i$ on each square $[2^{-i},2^{-i+1}]^2$. It is easy to see that the LDP for $(\tR_{n,W})_{n\in\I N}$ and speed $n^2$ must have the rate function identically~0. It is plausible that by taking different sets of the graphons $W_i$ in the above example, one may get rate functions with rather complicated zero sets.

Our results fully resolve the LDP problem for $(\tR_{n,W})_{n\in\I N}$ for every graphon $W$ with $\RN(W)=\FORB(W)$,
with the statements precisely mirrowing those of Theorem~\ref{thm:GeneralSpeedsH}. Indeed, all claimed results on the existence of an LDP  follow from Theorem~\ref{thm:GeneralSpeeds}: for example, if $s(n)=\omega(n)$ then the rate function is $\chi_{\RN(W)}=\chi_{\FORB(W)}$ with the LDP lower (resp.\ upper) bound following from Part~\ref{it:GS2} (resp.\ Part~\ref{it:GS3}) of Theorem~\ref{thm:GeneralSpeeds}. Also, the proof of Part~\ref{it:GSH4} of Theorem~\ref{thm:GeneralSpeedsH} (the non-existence of an LDP) relies only on the already established LDPs 
and the fact that the corresponding rate functions are distinct from each other, and thus the same proof works in this case as well.
 
However, it is unclear for which graphons $W$ it holds that $\RN(W)=\FORB(W)$. By the uniqueness of the rate function, Theorems~\ref{thm:GeneralSpeeds} and~\ref{thm:GeneralSpeedsH} imply that $\RN(W)=\FORB(W)$ for any $\{0,1\}$-valued graphon. On the other hand, the constant-$p$ graphon $W$ for $p\in (0,1)$ is extreme in the sense that $\RN(W)$ is the singleton $\{\T W\}$ while $\FORB(W)=\tW$ is the whole space. The earlier example of a graphon $W$ with countably many ``blocks" such that $\RN(W)=\tW$ (when $\FORB(W)\supseteq \RN(W)$ is also the whole of $\tW$) shows that we can have equality for graphons assuming values in $(0,1)$ on a set of positive measure. We do not see any good characterisation of $W$ with $\RN(W)=\FORB(W)$.

We did not include any non-existence statements in Theorem~\ref{thm:GeneralSpeeds}, since we are rather far from understanding the full picture. Of course, if $s(n)=o(n^2)$ then, by the exponential equivalence, the non-existence statements of Theorem~\ref{thm:GeneralSpeedsH} translate verbatim into those for~$(\tR_{n,W})_{n\in\I N}$. Also, further non-existence results can be obtained in those cases when there are two subsequences of $n$ for which we have LDPs (e.g.\ by Theorems~\ref{thm:GeneralSpeeds}) with different rate functions. However, the lack of general positive results for $s(n)=\Theta(n^2)$ prevents us from completing the picture.

\hide{

\section*{Acknowledgements}

Jan Greb\'\i k was supported by Leverhulme Research Project Grant RPG-2018-424, and by MSCA Postdoctoral Fellowships 2022 HORIZON-MSCA-2022-PF-01-01 project BORCA grant agreement number 101105722.
Oleg Pikhurko was supported by ERC Advanced Grant 101020255 and Leverhulme Research Project Grant RPG-2018-424.
}

\small

\begin{bibdiv}
\begin{biblist}

\bib{AlonSpencer16pm}{book}{
      author={Alon, N.},
      author={Spencer, J.~H.},
       title={The probabilistic method},
     edition={4},
      series={Wiley Series in Discrete Mathematics and Optimization},
   publisher={John Wiley \& Sons, Inc., Hoboken, NJ},
        date={2016},
}

\bib{BhattacharyaChatterjeeJanson23}{article}{
      author={Bhattacharya, B.~B.},
      author={Chatterjee, A.},
      author={Janson, S.},
       title={Fluctuations of subgraph counts in graphon based random graphs},
        date={2023},
     journal={Combin.\ Probab.\ Computing},
      volume={32},
       pages={428\ndash 464},
}

\bib{BhattacharyaGangulyLubetzkyZhao17}{article}{
      author={Bhattacharya, B.~B.},
      author={Ganguly, S.},
      author={Lubetzky, E.},
      author={Zhao, Y.},
       title={Upper tails and independence polynomials in random graphs},
        date={2017},
     journal={Advances in Math.},
      volume={319},
       pages={313\ndash 347},
}

\bib{Bogachev:mt}{book}{
      author={Bogachev, V.~I.},
       title={Measure theory},
   publisher={Springer},
        date={2007},
}

\bib{BCGPS}{unpublished}{
      author={Borgs, C.},
      author={Chayes, J.},
      author={Gaudio, J.},
      author={Petti, S.},
      author={Sen, S.},
       title={A large deviation principle for block models},
        date={2020},
        note={E-print arxiv:2007.14508},
}

\bib{BCLSV08}{article}{
      author={Borgs, C.},
      author={Chayes, J.},
      author={Lov{\'a}sz, L.},
      author={S{\'o}s, V.~T.},
      author={Vesztergombi, K.},
       title={Convergent sequences of dense graphs {I}: {Subgraph} frequencies,
  metric properties and testing},
        date={2008},
     journal={Advances in Math.},
      volume={219},
       pages={1801\ndash 1851},
}

\bib{ChakrabartyHazraHollanderSfragara22}{article}{
      author={Chakrabarty, A.},
      author={Hazra, R.~S.},
      author={den Hollander, F.},
      author={Sfragara, M.},
       title={Large deviation principle for the maximal eigenvalue of
  inhomogeneous {Erd\H{o}s}-{R\'{e}nyi} random graphs},
        date={2022},
     journal={J. Theoret. Probab.},
      volume={35},
       pages={2413\ndash 2441},
}

\bib{Chatterjee16bams}{article}{
      author={Chatterjee, S.},
       title={An introduction to large deviations for random graphs},
        date={2016},
     journal={Bull. Amer. Math. Soc. (N.S.)},
      volume={53},
       pages={617\ndash 642},
}

\bib{Chatterjee17ldrg}{book}{
      author={Chatterjee, S.},
       title={Large deviations for random graphs},
      series={Lecture Notes in Mathematics},
   publisher={Springer, Cham},
        date={2017},
      volume={2197},
        note={Lecture notes from the 45th Probability Summer School held in
  Saint-Flour, June 2015},
}

\bib{ChatterjeeDembo16}{article}{
      author={Chatterjee, S.},
      author={Dembo, A.},
       title={Nonlinear large deviations},
        date={2016},
     journal={Adv. Math.},
      volume={299},
       pages={396\ndash 450},
}

\bib{ChatterjeeVaradhan11}{article}{
      author={Chatterjee, S.},
      author={Varadhan, S. R.~S.},
       title={The large deviation principle for the {Erd\H{o}s-R\'enyi} random
  graph},
        date={2011},
     journal={Europ.\ J.\ Combin.},
      volume={32},
       pages={1000\ndash 1017},
}

\bib{Cohn13mt}{book}{
      author={Cohn, D.~L.},
       title={Measure theory},
     edition={Second},
      series={Birkh\"{a}user Advanced Texts: Basel Textbooks},
   publisher={Birkh\"{a}user/Springer, New York},
        date={2013},
}

\bib{DawsonGartner87}{article}{
      author={Dawson, D.~A.},
      author={G\"{a}rtner, J.},
       title={Large deviations from the {M}c{K}ean-{V}lasov limit for weakly
  interacting diffusions},
        date={1987},
     journal={Stochastics},
      volume={20},
       pages={247\ndash 308},
}

\bib{DelmasDhersinSciauveau21rsa}{article}{
      author={Delmas, J.-F.},
      author={Dhersin, J.-S.},
      author={Sciauveau, M.},
       title={Asymptotic for the cumulative distribution function of the
  degrees and homomorphism densities for random graphs sampled from a graphon},
        date={2021},
     journal={Random Struct.\ Algorithms},
      volume={58},
       pages={94\ndash 149},
}

\bib{DemboLubetzky18ecp}{article}{
      author={Dembo, A.},
      author={Lubetzky, E.},
       title={A large deviation principle for the {Erd\H{o}s}-{R}\'{e}nyi
  uniform random graph},
        date={2018},
     journal={Electron. Commun. Probab.},
      volume={23},
       pages={Paper No. 13},
}

\bib{DemboZeitouni10ldta}{book}{
      author={Dembo, A.},
      author={Zeitouni, O.},
       title={Large deviations techniques and applications},
      series={Stochastic Modelling and Applied Probability},
   publisher={Springer-Verlag, Berlin},
        date={2010},
      volume={38},
        note={Corrected reprint of the second (1998) edition},
}

\bib{HollanderMandjesRoccaverdeStarreveld18}{article}{
      author={den Hollander, F.},
      author={Mandjes, M.},
      author={Roccaverde, A.},
      author={Starreveld, N.~J.},
       title={Ensemble equivalence for dense graphs},
        date={2018},
     journal={Electron. J. Probab.},
      volume={23},
       pages={Paper No. 12, 26},
}

\bib{HollanderMarkering23}{article}{
      author={Den~Hollander, Frank},
      author={Markering, Maarten},
       title={Breaking of ensemble equivalence for dense random graphs under a
  single constraint},
        date={2023},
     journal={Journal of Applied Probability, to appear},
}

\bib{DharaSen22}{article}{
      author={Dhara, S.},
      author={Sen, S.},
       title={Large deviation for uniform graphs with given degrees},
        date={2022},
     journal={Ann. Appl. Probab.},
      volume={32},
       pages={2327\ndash 2353},
}

\bib{DionigiGarlaschelliHollanderMandje21}{article}{
      author={Dionigi, P.},
      author={Garlaschelli, D.},
      author={den Hollander, F.},
      author={Mandjes, M.},
       title={A spectral signature of breaking of ensemble equivalence for
  constrained random graphs},
        date={2021},
     journal={Electron. Commun. Probab.},
      volume={26},
       pages={Paper No. 67, 15},
}

\bib{Eichelsbacher97}{article}{
      author={Eichelsbacher, P.},
       title={Large deviations for products of empirical probability measures
  in the {$\tau$}-topology},
        date={1997},
     journal={J. Theoret. Probab.},
      volume={10},
       pages={903\ndash 920},
}

\bib{EichelsbacherSchmock02}{article}{
      author={Eichelsbacher, P.},
      author={Schmock, U.},
       title={Large deviations of {$U$}-empirical measures in strong topologies
  and applications},
        date={2002},
     journal={Ann. Inst. H. Poincar\'{e} Probab. Statist.},
      volume={38},
       pages={779\ndash 797},
}

\bib{EkelandTemam76cavp}{book}{
      author={Ekeland, I.},
      author={Temam, R.},
       title={Convex analysis and variational problems},
      series={Studies in Mathematics and its Applications, Vol. 1},
   publisher={North-Holland Publishing Co., Amsterdam-Oxford; American Elsevier
  Publishing Co., Inc., New York},
        date={1976},
}

\bib{FerayMeliotNikeghbali20}{article}{
      author={F\'{e}ray, V.},
      author={M\'{e}liot, P.-L.},
      author={Nikeghbali, A.},
       title={Graphons, permutons and the {T}homa simplex: three mod-{G}aussian
  moduli spaces},
        date={2020},
     journal={Proc.\ London Math.\ Soc.},
      volume={121},
       pages={876\ndash 926},
}

\bib{GrebikPikhurko:LDPStepW}{unpublished}{
      author={Greb{\'\i}k, J.},
      author={Pikhurko, O.},
       title={Large deviation principles for block and step graphon random
  graph models},
        date={2021},
        note={E-print arxiv:2101.07025},
}

\bib{HarelMoussetSamotij22}{article}{
      author={Harel, Matan},
      author={Mousset, Frank},
      author={Samotij, Wojciech},
       title={Upper tails via high moments and entropic stability},
        date={2022},
     journal={Duke Math. J.},
      volume={171},
      number={10},
       pages={2089\ndash 2192},
}

\bib{HazraHollanderMarkering25}{article}{
      author={Hazra, R.~S.},
      author={den Hollander, F.},
      author={Markering, M.},
       title={Large deviation principle for the norm of the {Laplacian} matrix
  of inhomogeneous {Erd{\H{o}}s-R{\'e}nyi} random graphs},
        date={2025},
     journal={Electronic Journal of Probability},
      volume={30},
       pages={1\ndash 22},
}

\bib{HladkyPelekisSileikis21}{article}{
      author={Hladk\'{y}, J.},
      author={Pelekis, C.},
      author={\v{S}ileikis, M.},
       title={A limit theorem for small cliques in inhomogeneous random
  graphs},
        date={2021},
     journal={J.\ Graph Theory},
      volume={97},
       pages={578\ndash 599},
}

\bib{JansonOleszkewiczRucinski04}{article}{
      author={Janson, S.},
      author={Oleszkiewicz, K.},
      author={Ruci{\'n}ski, A.},
       title={Upper tails for subgraph counts in random graphs},
        date={2004},
     journal={Israel J.\ Math.},
      volume={142},
       pages={61\ndash 92},
}

\bib{JansonRucinski02}{article}{
      author={Janson, S.},
      author={Ruci{\'n}ski, A.},
       title={The infamous upper tail},
        date={2002},
     journal={Random Struct.\ Algorithms},
      volume={20},
       pages={317\ndash 342},
}

\bib{JansonRucinski04}{article}{
      author={Janson, S},
      author={Ruci{\'n}ski, A.},
       title={The deletion method for upper tail estimates},
        date={2004},
     journal={Combinatorica},
      volume={24},
       pages={615\ndash 640},
}

\bib{Lovasz:lngl}{book}{
      author={{Lov\'asz}, L.},
       title={Large networks and graph limits},
      series={Colloquium Publications},
   publisher={Amer.\ Math.\ Soc.},
        date={2012},
}

\bib{LovaszSzegedy07gafa}{article}{
      author={Lov{\'a}sz, L.},
      author={Szegedy, B.},
       title={{Szemer\'edi's} lemma for the analyst},
        date={2007},
     journal={Geom.\ Func.\ Analysis},
      volume={17},
       pages={252\ndash 270},
}

\bib{LubetzkyZhao15}{article}{
      author={Lubetzky, E.},
      author={Zhao, Y.},
       title={On replica symmetry of large deviations in random graphs},
        date={2015},
     journal={Random Struct.\ Algorithms},
      volume={47},
       pages={109\ndash 146},
}

\bib{LubetzkyZhao17rsa}{article}{
      author={Lubetzky, E.},
      author={Zhao, Y.},
       title={On the variational problem for upper tails in sparse random
  graphs},
        date={2017},
     journal={Random Struct.\ Algorithms},
      volume={50},
       pages={420\ndash 436},
}

\bib{Markering23}{article}{
      author={Markering, M.},
       title={The large deviation principle for inhomogeneous
  {Erd\H{o}s}-{R\'{e}nyi} random graphs},
        date={2023},
     journal={J. Theoret. Probab.},
      volume={36},
       pages={711\ndash 727},
}

\bib{Massoulie14stoc}{inproceedings}{
      author={Massouli\'{e}, Laurent},
       title={Community detection thresholds and the weak {R}amanujan
  property},
        date={2014},
   booktitle={S{TOC}'14---{P}roceedings of the 2014 {ACM} {S}ymposium on
  {T}heory of {C}omputing},
   publisher={ACM, New York},
       pages={694\ndash 703},
}

\bib{MosselNeemanSly18}{article}{
      author={Mossel, E.},
      author={Neeman, J.},
      author={Sly, A.},
       title={A proof of the block model threshold conjecture},
        date={2018},
     journal={Combinatorica},
      volume={38},
       pages={665\ndash 708},
}

\bib{RassoulaghaSeppelainen14cldigm}{book}{
      author={Rassoul-Agha, F.},
      author={Sepp\"{a}l\"{a}inen, T.},
       title={A course on large deviations with an introduction to {G}ibbs
  measures},
      series={Graduate Studies in Mathematics},
   publisher={American Mathematical Society, Providence, RI},
        date={2015},
      volume={162},
}

\bib{Sanov57}{article}{
      author={Sanov, I.~N.},
       title={On the probability of large deviations of random magnitudes},
        date={1957},
     journal={Mat. Sb. N. S.},
      volume={42 (84)},
       pages={11\ndash 44},
}

\bib{Srivastava98cbs}{book}{
      author={Srivastava, S.~M.},
       title={A course on {B}orel sets},
      series={Graduate Texts in Mathematics},
   publisher={Springer-Verlag, New York},
        date={1998},
      volume={180},
}

\bib{Varadhan16ld}{book}{
      author={Varadhan, S. R.~S.},
       title={Large deviations},
      series={Courant Lecture Notes in Mathematics},
   publisher={Courant Institute of Mathematical Sciences, New York; American
  Mathematical Society, Providence, RI},
        date={2016},
      volume={27},
}

\end{biblist}
\end{bibdiv}


\normalsize
\appendix

\section{Functional analysis}\label{app:Func}

In this appendix we collect some results
that we need in this paper.

Let $\mu$ be a probability measure on a measurable space~$X$. Recall that a collection $\mathcal{F}\subseteq L^1(X,\mu)$ is {\it uniformly integrable} if
$$
\lim_{C\to\infty} \sup_{f\in\mathcal{F}} \int_{\{|f|>C\}} |f|\,\dd\mu =0.
$$
 (Since $\mu(X)<\infty$, some other equivalent definitions are possible, see e.g.,~\cite[Proposition 4.5.3]{Bogachev:mt}.)
We use the following results whose (more general) statements can be found in e.g.~\cite{Bogachev:mt} (as
Theorems 4.7.18, 4.5.9 and 4.7.10 respectively).

\begin{theorem}[Dunford--Pettis]\label{App:DP} 
A collection $\mathcal{F}\subseteq L^1(X,\mu)$ is uniformly integrable if and only if its $weak$ closure is compact in the $weak$ topology.\qqed 
\end{theorem}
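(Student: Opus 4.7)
The plan is to establish the two implications separately, reducing the topological weak compactness to sequential weak compactness via the Eberlein--\v{S}mulian theorem (Theorem~\ref{APP:ES}).

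First I would address the direction ``weak relative compactness $\Rightarrow$ uniform integrability'' by contrapositive. Suppose $\mathcal{F}$ fails to be uniformly integrable. Then there exist $\varepsilon > 0$, a sequence $(f_n) \subseteq \mathcal{F}$, and measurable sets $A_n$ with $\mu(A_n) \to 0$ and $\int_{A_n} |f_n|\dd\mu \geq \varepsilon$ for all $n$. By weak sequential compactness applied to the weak closure, extract a subsequence (still called $f_n$) converging weakly to some $f \in L^1(X,\mu)$. Since $f \in L^1$, its indefinite integral is $\mu$-absolutely continuous, so $\int_{A_n} f\dd\mu \to 0$. The core step is to show $\int_{A_n} f_n\dd\mu - \int_{A_n} f\dd\mu \to 0$, which is not immediate from weak convergence because the test function $\I 1_{A_n}$ varies with $n$. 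This is handled by a Vitali--Hahn--Saks-type argument: the sequence of signed measures $\nu_n(E) := \int_E f_n\dd\mu$ converges setwise to $\nu(E) := \int_E f\dd\mu$, forcing the $\nu_n$ to be uniformly $\mu$-absolutely continuous. Combined with $\mu(A_n) \to 0$, this yields $\nu_n(A_n) \to 0$, contradicting the lower bound $\varepsilon$ (after first replacing $f_n$ by $f_n\cdot\mathrm{sgn}(f_n)\I 1_{A_n}$ to convert the absolute-value integral into a signed one visible to weak testing).

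For the converse ``uniform integrability $\Rightarrow$ weak relative compactness,'' I would proceed by truncation and diagonal extraction. Given a sequence $(f_n) \subseteq \mathcal{F}$, for each integer $k \geq 1$ set
\[
f_n^{(k)} := f_n\,\I 1_{\{|f_n|\leq k\}}.
\]
The family $\{f_n^{(k)}\}_n$ is uniformly bounded by $k$ and hence bounded in $L^2(X,\mu)$ (since $\mu$ is finite). By reflexivity of $L^2$, some subsequence converges weakly in $L^2$, and therefore weakly in $L^1$ as every $g \in L^\infty$ lies in $L^2$. A diagonal argument produces a single subsequence $(f_{n_j})$ along which $(f_{n_j}^{(k)})_j$ converges weakly in $L^1$ for every $k$. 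Uniform integrability supplies $\sup_n \|f_n - f_n^{(k)}\|_1 \to 0$ as $k \to \infty$, so $(f_{n_j})$ is weakly Cauchy in $L^1$. Finally, invoke weak sequential completeness of $L^1(X,\mu)$ to obtain a weak $L^1$-limit, which lies in the weak closure of~$\mathcal{F}$.

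The principal obstacle is the second direction, specifically the appeal to weak sequential completeness of $L^1(X,\mu)$. One must verify that the finitely additive set function $\nu(E) := \lim_j \int_E f_{n_j}\dd\mu$ is countably additive and $\mu$-absolutely continuous, after which the Radon--Nikodym theorem delivers the density that realizes the weak limit. Countable additivity follows from the Vitali--Hahn--Saks theorem (which rests on a Baire-category argument in the complete metric space of measurable sets modulo null sets under the $\mu$-symmetric-difference metric), while absolute continuity is immediate from the uniform integrability of $(f_{n_j})$.
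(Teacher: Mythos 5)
The paper offers no proof of Theorem~\ref{App:DP} to compare against: it is quoted as a known result with a pointer to Bogachev's book, so your argument stands on its own. As such, it is the standard Dunford--Schwartz-style proof and is essentially correct: necessity of uniform integrability via Eberlein--\v{S}mulian plus a Vitali--Hahn--Saks argument, sufficiency via truncation, weak compactness of balls in $L^2$, a diagonal extraction, and weak sequential completeness of $L^1$. Three points to tighten. First, in the necessity direction, failure of uniform integrability (in the paper's sense) only yields $\int_{\{|f_n|>n\}}|f_n|\,\dd\mu\ge\varepsilon$; to get $\mu(A_n)\to 0$ for $A_n:=\{|f_n|>n\}$ you must first note $\sup_{f\in\mathcal{F}}\|f\|_1<\infty$, which follows from the assumed weak compactness by uniform boundedness. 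Second, the parenthetical ``replace $f_n$ by $f_n\,\mathrm{sgn}(f_n)\I 1_{A_n}$'' does not work literally (that function need not converge weakly); the clean fix is to split $A_n$ into $A_n\cap\{f_n\ge 0\}$ and $A_n\cap\{f_n<0\}$ and apply the uniform $\mu$-absolute continuity of the $\nu_n$ given by Vitali--Hahn--Saks to each piece. Third, you can bypass weak sequential completeness altogether: the truncation limits $h^{(k)}$ satisfy $\|h^{(k)}-h^{(l)}\|_1\le\sup_n\int_{\{|f_n|>\min(k,l)\}}|f_n|\,\dd\mu$ by weak lower semicontinuity of the norm, hence converge in $L^1$ to some $f$, and your three-term estimate then gives $f_{n_j}\rightharpoonup f$ directly. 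Finally, to conclude compactness of the weak closure rather than of $\mathcal{F}$ itself, invoke Eberlein--\v{S}mulian in its relative form (or first enclose $\mathcal{F}$ in a weakly closed uniformly integrable set via de la Vall\'ee-Poussin).
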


\begin{theorem}[De la Vall\' ee-Poussin]\label{APP:VP}
A collection $\mathcal{F}\subseteq L^1(X,\mu)$ is uniformly integrable if and only if there is non-negative non-decreasing convex function $G:\mathbb{R}\to \mathbb{R}$ such that $\lim_{t\to \infty} \frac{G(t)}{t}=\infty$ and
$$\sup_{f\in \mathcal{F}} \int_{X} G(|f|) \ d\mu<\infty.\qqed$$
\end{theorem}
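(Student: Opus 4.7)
\medskip\noindent\textbf{Proof proposal for Theorem~\ref{APP:VP}.}

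The plan is to prove the two implications separately; each one is essentially an elementary estimate once the right function $G$ is in hand, but constructing $G$ in the harder direction requires a careful choice.

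For the easy direction, assume that such a function $G$ exists and set $M:=\sup_{f\in\mathcal{F}}\int_X G(|f|)\,\dd\mu<\infty$. Given $\varepsilon>0$, the growth condition $G(t)/t\to\infty$ yields some $C>0$ with $G(t)\ge (M/\varepsilon)\,t$ for all $t\ge C$. Then for every $f\in \mathcal{F}$,
$$
\int_{\{|f|>C\}}|f|\,\dd\mu\ \le\ \frac{\varepsilon}{M}\int_{\{|f|>C\}}G(|f|)\,\dd\mu\ \le\ \frac{\varepsilon}{M}\cdot M\ =\ \varepsilon,
$$
so $\mathcal{F}$ is uniformly integrable.

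For the harder direction, assume $\mathcal{F}$ is uniformly integrable. The key step is to build $G$ as a primitive of a step function that grows to infinity arbitrarily slowly. Iteratively pick integers $0\le n_1<n_2<\cdots$ with $n_k\to\infty$ so that
$$
\sup_{f\in\mathcal{F}}\int_{\{|f|>n_k\}}|f|\,\dd\mu\ \le\ 2^{-k},\qquad k\in \mathbb{N}.
$$
Define $g(t):=|\{k\in \mathbb{N}:n_k\le t\}|$ for $t\ge 0$ and $g(t):=0$ for $t<0$, and let $G(t):=\int_{0}^{t}g(s)\,\dd s$ for $t\ge0$, extended by $G(t):=0$ for $t<0$. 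Then $G$ is non-negative, non-decreasing, and convex (as the antiderivative of a non-decreasing non-negative function). Moreover, $g(t)\to\infty$ as $t\to\infty$, hence $G(t)/t\to\infty$ by L'H\^opital or a direct estimate.

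It remains to check that $\sup_{f\in\mathcal{F}}\int_{X}G(|f|)\,\dd\mu<\infty$. The crucial pointwise inequality is $G(t)\le t\,g(t)=\sum_{k}t\,\mathbbm{1}_{[n_k,\infty)}(t)$ for all $t\ge 0$, which follows because $g$ is non-decreasing so $\int_0^t g(s)\,\dd s\le t\,g(t)$. Applying this to $t=|f(x)|$, integrating, and swapping sum and integral by Tonelli's theorem gives
$$
\int_{X}G(|f|)\,\dd\mu\ \le\ \sum_{k\in \mathbb{N}}\int_{\{|f|\ge n_k\}}|f|\,\dd\mu\ \le\ \sum_{k\in \mathbb{N}}2^{-k}\ =\ 1,
$$
uniformly in $f\in \mathcal{F}$, which is the required bound.

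The only real obstacle is the choice of the thresholds $n_k$ and verifying that the resulting $G$ simultaneously grows superlinearly and integrates uniformly; both follow from the single summable-tail estimate $\sup_f\int_{\{|f|>n_k\}}|f|\,\dd\mu\le 2^{-k}$, which is exactly what uniform integrability provides.
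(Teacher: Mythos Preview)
Your proof is correct and is essentially the standard argument. Note, however, that the paper does not actually prove Theorem~\ref{APP:VP}: it is stated in the appendix as a known result, with a pointer to Bogachev~\cite[Theorem~4.5.9]{Bogachev:mt}, and no proof is given. So there is no ``paper's own proof'' to compare against.

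One tiny cosmetic point: you choose the thresholds $n_k$ so that $\sup_{f}\int_{\{|f|>n_k\}}|f|\,\dd\mu\le 2^{-k}$, but in the final estimate you sum $\int_{\{|f|\ge n_k\}}|f|\,\dd\mu$. To make these match literally, either define $g(t):=|\{k:n_k<t\}|$ with a strict inequality, or pick the $n_k$ as integers strictly larger than the thresholds coming from uniform integrability so that $\{|f|\ge n_k\}\subseteq\{|f|>n_k-1\}$. This is a one-word fix and does not affect the argument.
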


\begin{theorem}[Eberlein--\v Smulian]\label{APP:ES}
Let $A\subseteq L^1(X,\mu)$ be closed in the $weak$ topology.
Then $A$ is compact in the $weak$ topology if and only if every sequence contains a subsequence that converges in the $weak$ topology.\qqed
\end{theorem}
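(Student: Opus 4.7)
The plan is to prove each direction separately, using the preceding Dunford--Pettis Theorem (Theorem~\ref{App:DP}) as the key tool that translates weak compactness in $L^1(X,\mu)$ into uniform integrability.

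For the implication ``$\Leftarrow$'', assume every sequence in $A$ has a weakly convergent subsequence, and recall that $A$ is weakly closed. First, $A$ is norm bounded, since otherwise picking $f_n\in A$ with $\|f_n\|_1\to\infty$ would produce a sequence no subsequence of which can converge weakly (as weakly convergent sequences are norm bounded by Banach--Steinhaus). Next, I claim $A$ is uniformly integrable. If not, fix $\epsilon>0$ and select $f_n\in A$ with $\int_{\{|f_n|>n\}}|f_n|\dd\mu\ge\epsilon$; for any subsequence $(f_{n_k})$ with $n_k\ge k$ the same inequality gives $\int_{\{|f_{n_k}|>k\}}|f_{n_k}|\dd\mu\ge\epsilon$, so $(f_{n_k})$ fails uniform integrability. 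However, by hypothesis some further subsequence converges weakly to some $f\in L^1$, and then $\{f_{n_{k_j}}:j\in\I N\}\cup\{f\}$ is weakly compact (a weakly convergent sequence together with its limit is always compact in the Hausdorff weak topology, by the standard finite-subcover argument), hence uniformly integrable by the $\Rightarrow$ direction of Dunford--Pettis, contradicting the construction. Thus $A$ is bounded and uniformly integrable, so by Dunford--Pettis the weakly closed set $A$ is weakly compact.

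For the implication ``$\Rightarrow$'', assume $A$ is weakly compact and let $(f_n)\subseteq A$. By Dunford--Pettis, $A$ is uniformly integrable and norm bounded; I extract a weakly convergent subsequence by truncation and diagonalization. For each $N\in\I N$, let $g_n^{(N)}:=f_n\,\I 1_{\{|f_n|\le N\}}$; these are pointwise bounded by $N$, hence lie in a norm-bounded subset of the Hilbert space $L^2(X,\mu)$ (since $\mu$ is a probability measure). By Banach--Alaoglu in $L^2$ applied successively and a diagonal argument, pass to one subsequence $(f_{n_k})_k$ along which, for every fixed $N$, the sequence $(g_{n_k}^{(N)})_k$ converges weakly in $L^2$, and hence also when tested against any $\phi\in L^\infty\subseteq L^2$, to some $h_N\in L^2$. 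Uniform integrability of $(f_n)$ forces $\|h_N-h_M\|_1\to 0$ as $N,M\to\infty$, so $h_N\to f$ in $L^1$ for some $f\in L^1$. To see that $(f_{n_k})$ converges to $f$ weakly in $L^1$, take any $\phi\in L^\infty$ and split
$$
\int f_{n_k}\phi\dd\mu=\int g_{n_k}^{(N)}\phi\dd\mu+\int f_{n_k}\,\I 1_{\{|f_{n_k}|>N\}}\phi\dd\mu.
$$
The first summand tends to $\int h_N\phi\dd\mu$ as $k\to\infty$, while the second is bounded by $\|\phi\|_\infty\sup_k\int_{\{|f_{n_k}|>N\}}|f_{n_k}|\dd\mu$, which tends to $0$ as $N\to\infty$ uniformly in $k$ by uniform integrability. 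Sending $k\to\infty$ and then $N\to\infty$ yields $\int f_{n_k}\phi\dd\mu\to\int f\phi\dd\mu$, and weak closedness of the compact set $A$ places $f\in A$.

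The main obstacle will be the interchange of the $k\to\infty$ and $N\to\infty$ limits in the forward direction; uniform integrability of $(f_n)$ is precisely the ingredient that makes this interchange legitimate and simultaneously forces the Cauchyness of $(h_N)$ that produces the weak limit $f$. The backward direction, by contrast, becomes almost a formality once one extracts from Dunford--Pettis the fact that weakly convergent sequences in $L^1$ are automatically uniformly integrable.
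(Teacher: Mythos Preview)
The paper does not prove this theorem; it merely cites Bogachev's \emph{Measure Theory} (Theorem~4.7.10) and marks the statement with \verb|\qqed|. Your argument therefore supplies something the paper omits, and it is essentially correct. Your approach is tailored to $L^1$: rather than proving the general Banach-space Eberlein--\v{S}mulian theorem, you exploit the Dunford--Pettis characterisation of weak compactness via uniform integrability, which makes both directions concrete and constructive. This is a genuinely different and more elementary route than the abstract proof one finds in Bogachev.

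Two minor points. First, in the backward direction the separate norm-boundedness step is redundant: with the paper's definition of uniform integrability and $\mu(X)=1$, uniform integrability already forces $\sup_{f\in A}\|f\|_1<\infty$. Second, in the forward direction your invocation of ``Banach--Alaoglu in $L^2$'' is a slight misnomer. Banach--Alaoglu gives weak-$*$ compactness, not sequential compactness. What you actually need is that every bounded sequence in a Hilbert space has a weakly convergent subsequence; this can be proved directly (pass to the separable closed span of the sequence, diagonalise against a countable orthonormal basis, and use Bessel/Fatou to identify the limit), so there is no hidden circularity with the very theorem you are proving. With that clarification, the diagonalisation over $N$, the Cauchyness of $(h_N)$ via weak lower semicontinuity of the $L^1$-norm, and the final $\epsilon/3$ interchange of limits all go through as written.
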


The following result is an easy consequence of the Hahn-Banach Theorem, for a proof see e.g.~\cite[Page~6]{EkelandTemam76cavp} 

\begin{theorem}[Mazur's lemma]\label{th:B9}
A convex subset of a normed space  is weakly closed if and only if it is closed.\qqed
\end{theorem}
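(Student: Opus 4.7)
The plan is to prove the two directions of the equivalence separately. The easy direction (weakly closed implies norm-closed) follows immediately from the fact that the weak topology is coarser than the norm topology: every continuous linear functional on a normed space is norm-continuous, so the subbasis generating the weak topology consists of norm-open sets, and therefore every weakly closed set is norm-closed. This implication does not use convexity at all.

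For the non-trivial direction, suppose $C\subseteq X$ is a convex norm-closed subset of a normed space $X$. The plan is to show that $X\setminus C$ is weakly open by producing, for each point $x\in X\setminus C$, a weakly open neighbourhood of $x$ disjoint from $C$. Since $\{x\}$ is a compact convex set disjoint from the closed convex set $C$, the geometric Hahn--Banach separation theorem supplies a continuous linear functional $f\in X^*$ and a real number $\alpha$ such that
\[
 f(y)\le \alpha < f(x),\quad \mbox{for every $y\in C$.}
\]
The set $H:=\{z\in X\mid f(z)>\alpha\}$ is the preimage of the open half-line $(\alpha,\infty)$ under $f$, and since $f$ is weakly continuous (being a member of $X^*$), $H$ is weakly open. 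By construction $x\in H$ and $H\cap C=\emptyset$, which gives the required weak neighbourhood.

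The main (and really the only) obstacle is the appeal to Hahn--Banach separation of a point from a closed convex set, which is itself a nontrivial theorem but is entirely standard and is precisely the ingredient that the statement points to when it says the lemma is an easy consequence of Hahn--Banach. No further tools (reflexivity, sequential compactness, or approximation by convex combinations) are needed for this formulation, since we only assert topological closedness rather than the stronger statement that every weak limit of a sequence can be approximated in norm by convex combinations.
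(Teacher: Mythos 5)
Your proof is correct and follows exactly the route the paper indicates: the paper gives no proof of its own, merely noting that the lemma ``is an easy consequence of the Hahn--Banach Theorem'' and citing Ekeland--Temam, and your argument (trivial direction from the weak topology being coarser, nontrivial direction by strictly separating a point from the closed convex set with a functional, whose open half-space is weakly open) is precisely that standard Hahn--Banach separation proof.
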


\end{document}

\section*{Notation/Macros}

\verb$\ProofOf{X}$: proof of part labelled by \verb$X$

$\I G(n,W)$ \verb$\I G(n,W)$

$\L{x}:=x\log(x)$ \verb$\L{x}$, it has \verb$\left(...\right)$
 
$\l{\frac ab}$ \verb$\l{\frac ab}$: the same except no \verb$\left \right$

$f(\_)$ \verb$f(\_)$

$\I 1_X$ \verb$\I 1_X$ 

$[0,1]^k$ instead of $[0,1]^{[k]}$ ? YES!

$d_\Box$ \verb$d_\Box$

$\NZ$ \verb$\NZ$

$\iP$ \verb$\iP$

$\Sball$ \verb$\Sball$

$\W$ \verb$\W$

$\tW$ \verb$\tW$

$\widetilde{W}$ \verb$\widetilde{W}$

$\tR_{n,W}$ \verb$\tR_{n,W}$

$\tH_{n,W}$ \verb$\tH_{n,W}$

$\f{H}$ \verb$\f{H}$

$\tf{H}$ \verb$\tf{H}$

$\bf x$ \verb$\bf x$

$\FORB(W)$ \verb$\FORB(W)$

$\RN(W)$ \verb$\RN(W)$

$\mathcal{P}([0,1])$ \verb$\mathcal{P}([0,1])$

$\C G_n$ \verb"\C G_n" and \verb$\fG_n$: the set of graphs on $[n]$

$\fH_n$ \verb$\fH_n$: weighted graphs on $[n]$

$\V p$ \verb"\V p": symmetric matrix $(p_{i,j})_{i,j\in k}$

$I_p$: CV function $\C W\to\I R$

interval step graphons

$\cI{\V\alpha}{i}$ \verb"\cI{\V\alpha}{i}" : partition of $[0,1]$ into intervals

$\Ak$ \verb|\Ak|: set of partitions of $[0,1]$

$\Aalpha$ \verb|\Aalpha|: ones with ratios $\alpha_1:\dots:\alpha_k$




$\Wk$ \verb|\Wk|

$\tWk$ \verb|\tWk|

$\Tk{U,\C A}$ \verb"\Tk{U,\C A}": equivalence class of $(U,\C A)$

$\Walpha$ \verb"\Walpha"

$\tWalpha$ \verb"\tWalpha"


Use $\T U$ \verb$\T U$ for elements of $\tW$

$\tPap$ \verb|\tPap|: sample $a_i$ points from $i$-th part

$\iR_{n,W}$, $\tR_{n,W}$ \verb"\iR_{n,W}", \verb|\tR_{n,W}|: our sampling

$\Sball(U,\eta)$ \verb"\Sball(U,\eta)": closed ball

$\Sball(\widetilde U,\eta)$ \verb"\Sball(\widetilde U,\eta)"



\section*{Borgs et al Notation}

$\C W$ \verb"\C W"

$\tW$: \verb|\tW|

$x\ind{i}$ \verb$x\ind{i}$ and \verb$x_i$

$f^G$ \ra\ $\f G$ \verb|\f{G}|

$\T{f}^G$ \ra\ $\tf G$ \verb"\tf{G}"

$\T f$: equivalence class of $f\in\C W$

$\Delta_m$: rational simplex

uniform interval step graphon $W=(p_{ij})_{i,j\in [k]}$

speed, rate function

(1-dim) entropy $h_p$

$I_W(U)=\frac12\sum_{ij} \int h_{p_{ij}}(U) :\C W\to [0,\infty]$ 

$B(\T f,\e)=\{g\in\C W\mid \delta_\Box(\T f,g)\le \e\}$

$S(\T f,\e)=\{\T g\in\tW\mid \delta_\Box(\T f,\T g)\le \e\}$ \ra\ $\Sball(U,\e)$ \verb"\Sball(U,\e)"

$\iP_{kn,W}$ \ra\ $\iP_{\V a_n,\V p}$ \verb|\iP_{\V a_n,\V p}| for $\V a_n=n(1,\dots,1)$

$\T {\iP}_{kn,W}$ \ra\ $\tP_{\V a_n,\V p}$ \verb|\tP_{\V a_n,\V p}| for $\V a_n=n(1,\dots,1)$

closed set $\T F\subseteq \T W$

open set $\T U\subseteq \T W$ \ra\ $\T G$ \verb"\T G"

$J_W=\sup\int I_W$ \ra\ $J_{\V a,\V p}$

\end{document}